\definecolor{egyptianblue}{rgb}{0.06, 0.2, 0.65}
\definecolor{cobalt}{rgb}{0.0, 0.28, 0.67}
\definecolor{yaleblue}{rgb}{0.06, 0.3, 0.57}
\definecolor{maroon}{rgb}{0.5, 0.0, 0.0}
\definecolor{indigo}{rgb}{0.0, 0.25, 0.42}
\definecolor{armygreen}{rgb}{0.29, 0.33, 0.13}
\definecolor{huntergreen}{rgb}{0.21, 0.37, 0.23}
\definecolor{royalblue}{rgb}{0.0, 0.14, 0.4}
\definecolor{zaffre}{rgb}{0.0, 0.08, 0.66}
\definecolor{myurlcolor}{rgb}{0.5, 0.0, 0.0}
\definecolor{mycitecolor}{rgb}{0.06, 0.2, 0.65}
\definecolor{myrefcolor}{rgb}{0.06, 0.2, 0.65}
\renewcommand*{\backref}[1]{}
\newcommand{\citecomment}[2][]{\citen{#2}#1\citevar}
\newcommand{\citeone}[1]{\citecomment{#1}}
\newcommand{\citetwo}[2][]{\citecomment[,~#1]{#2}}
\newcommand{\citevar}{\@ifnextchar\bgroup{;~\citeone}{\@ifnextchar[{;~\citetwo}{]}}}
\newcommand{\citefirst}{\@ifnextchar\bgroup{\citeone}{\@ifnextchar[{\citetwo}{]}}}
\newcommand{\catname}[1]{{\mathsf{#1}}} 
\DeclareMathOperator{\Hom}{Hom}
\DeclareMathOperator{\Vol}{Vol}
\DeclareMathOperator{\Pon}{Pon}
\DeclareMathOperator{\rad}{rad}
\newcommand{\LCAG}{\catname{LCAG}}
\newcommand{\Abelian}{\catname{Ab}}
\renewcommand{\d}{{\rm d}}
\newcommand{\wt}{\rm wt}
\newcommand{\pair}[1]{\big(#1,\widehat{#1}\big)}
\newcommand{\Ball}{\mathds{B}}
\newcommand{\Shell}{\mathds{S}}
\newcommand{\Ballm}{\mathds{B}^{(m)}}
\newcommand{\Shellm}{\mathds{S}^{(m)}}
\newcommand{\Ballo}{\mathds{B}^{(0)}}
\newcommand{\Shello}{\mathds{S}^{(0)}}
\newcommand{\BallK}{\Ball_{\Kfield}}
\newcommand{\Ballt}{\Ball_{2}}
\newcommand{\BBo}{\Ball_2^{(0)}}
\newcommand{\BBtm}{\Ball_2^{(m)}}
\newcommand{\Shelloc}{\Shello_{\rm c}}
\newcommand{\Shellof}{\Shello_{\rm f}}
\newcommand{\SStmc}{\Shell_{\rm c}^{(m)}}
\newcommand{\SStmf}{\Shell_{\rm f}^{(m)}}
\newcommand{\Shelltc}{\Shell_{\rm c}}
\newcommand{\Shelltf}{\Shell_{\rm f}}
\newcommand{\TT}{\mathds{T}}
\newcommand{\ZZ}{\mathds{Z}}
\newcommand{\Zp}{\mathds{Z}_p}
\newcommand{\Qp}{\mathds{Q}_p}
\newcommand{\QQ}{\mathds{Q}}
\newcommand{\Qpdual}{\widehat{\Qp}}
\newcommand{\blank}{\cdot}
\newcommand{\Fourier}{\mathcal{F}}
\newcommand{\invFourier}{\Fourier^{-1}}
\newcommand{\indicator}[1]{\mathds{1}_{#1} }
\newcommand{\Kfield}{\mathds{K}}
\newcommand{\dualKfield}{\widehat{\mathds{K}}}
\newcommand{\OK}{\mathcal{O}_\Kfield}
\newcommand{\PK}{\mathcal{P}_\Kfield}
\newcommand{\h}{h}
\newcommand{\ConXG}{C(\G)}
\newcommand{\ConMG}{C(1)}
\newcommand{\ConTd}{C(h)}
\DeclareMathOperator{\tr}{tr}
\newcommand{\Gm}{{\mathds G_m}}
\newcommand{\x}{[x]}
\newcommand{\vol}{{\rm Vol}}
\newcommand{\Prob}{{\rm Prob}}
\newcommand{\Pb}{{\mathds P}}
\newcommand{\Pbm}{{\mathds P}^m}
\newcommand{\PbG}{{\mathds P}^\ast}
\newcommand{\EX}{{\mathds E}}
\newcommand{\EXm}{{\mathds E}_m}
\newcommand{\EXG}{{\mathds E}_\ast}
\newcommand{\ve}[1]{{\mathbf{#1}}}
\newcommand{\G}{{\mathds G}}
\newcommand{\Gd}{{\widehat{\mathds G}}}
\newcommand{\NN}{{\mathds N}}
\newcommand{\No}{{\mathds N}_0}
\newcommand{\e}{{\rm e}}
\newcommand{\Z}{\tilde{Z}}
\newcommand{\C}{\mathscr C}
\DeclarePairedDelimiter\floor{\lfloor}{\rfloor}
\DeclarePairedDelimiter\Gmap{[}{]}
\newcommand{\y}{\ve{y}} 
\renewcommand{\x}{\Gmap{\ve{x}}}
\newtheorem{theorem}{Theorem}[section]
\newtheorem{lemma}{Lemma}[section]
\newtheorem{proposition}{Proposition}[section]
\newtheorem{corollary}{Corollary}[section]
\theoremstyle{definition}
\newtheorem{definition}{Definition}[section]
\theoremstyle{remark}
\def\namedlabel#1#2{\begingroup
    #2%
    \def\@currentlabel{#2}%
    \phantomsection\label{#1}\endgroup
}
\begin{document}

\title{Geometry-induced criticality in $p$-adic scaling limits of random walks}

\author{Rahul Rajkumar$^1$ \and David Weisbart$^2$}

\address{
\begin{tabular}[h]{cc}
 $^{1,2}$Department of Mathematics\\
  University of California, Riverside
  \end{tabular}
  }
\email{$^1$rahul.rajkumar@email.ucr.edu} \email{$^2$weisbart@math.ucr.edu}


\begin{abstract}

An anisotropy parameter $h$ in $(0,1]$ induces on $\Qp^2$ a duality-compatible, two-scale filtration that collapses to one scale at the right endpoint. This filtration defines shell-uniform transition laws for hierarchical random walks on a discrete group whose scaling limits are L\'{e}vy processes on $\Qp^2$. The diffusion constants of the coordinate processes jump at the right endpoint, even though the radial jump law depends continuously on $h$. This instance of geometry-induced criticality isolates a structural mechanism that should extend to locally compact abelian groups and suggests a route to studying critical behavior in ultrametric models.

\end{abstract}

\keywords{Ultrametric diffusion, scaling limits, component processes, $p$-adic Brownian motion, criticality}

\maketitle

\tableofcontents

%


\thispagestyle{empty}


\section{Introduction}

For any prime $p$, the field $\Qp$ is the completion of $\QQ$ under the ultrametric induced by the $p$-adic absolute value $|\cdot|_p$. With addition, $\Qp$ is a non-discrete, totally disconnected object in $\LCAG$, the category of locally compact Hausdorff abelian groups with continuous homomorphisms.

For any positive $b$, the Vladimirov operator $\Delta_b$ (see Section~\ref{subsec:LimitProcess1D}) is an analog of the Laplacian on $L^2(\Qp)$. Take $\delta_0$ to be the Dirac measure centered at $0$.  For positive $\sigma$, the Cauchy problem
\begin{equation}\label{Intro:HeatEquation}
\begin{cases}
\displaystyle \frac{\d}{\d t}\,\rho(t,x) \,=\, -\sigma\,\Delta_b\,\rho(t,x),\\[4pt]
\rho(0,\cdot)=\delta_0,
\end{cases}
\end{equation}
admits a unique solution $\rho(t,\cdot)$ of unit mass for each $t$ in $(0,\infty)$. The family $\{\rho(t,\cdot)\}_{t>0}$ forms a convolution semigroup of probability density functions on $\Qp$ that give rise by standard construction to a probability measure $\Pb$ on $D([0,\infty)\colon\Qp)$, the Skorokhod space of $\Qp$-valued c\`adl\`ag paths endowed with the $J_1$ topology~\cite{Varadarajan:LMP:1997, bil1, Kochubei:Book:2001}.  
Take $Y$ to be the function from $[0,\infty)\times D([0,\infty)\colon \Qp)$ to $\Qp$ that is given by %
\begin{equation}\label{Intro:Yt}
Y(t,\omega) = \omega(t),
\end{equation} %
and $Y_t$ to be the random variable \begin{equation}\label{Intro:Ydef}Y_t(\omega) = \omega(t).\end{equation} %
This current work refers to the stochastic process $(D([0,\infty)\colon \Qp), \Pb, Y)$ as a $p$-adic Brownian motion. Authors traditionally refer to diffusion processes of this type as ultrametric diffusion processes~\cite{Avetisov:JPA:2002, Bik:UAA:2010}, but such processes may potentially be more general.

Prior work shows that $p$-adic Brownian motion is a limit of a sequence of discrete-time random walks in $\Qp$~\cite{BW}; in fact, it is the scaling limit of a single discrete-time random walk on a discrete group via sequences of embeddings that densely and exhaustively approximate $\Qp$ \cite{BW}.  Analogous results hold for vector spaces over local fields~\cite{W:Expo:24}, and for Brownian motion on $\Zp$~\cite{Pierce_Weisbart:JSP:2025}, though in the latter case it is only known that the Brownian motion is a limit of a sequence of random walks. These motivate a general perspective on constructing scaling limits of random walks in $\LCAG$.

A central motivation in $p$-adic mathematical physics is that $\Qp$ exhibits hierarchical, ultrametric structure while remaining analytically tractable.  Researchers have pursued $p$-adic models as idealized descriptions of various systems displaying approximate hierarchical or ultrametric structures \cite{Dragovich:Biosystems:2021,Dragovich:PNUAA:2009,Dragovich:PNUAA:2017}, including models of contagion in the presence of social clustering  \cite{Volov:PNUAA:2020,Khrennikov:medRxiv:2020,Khrennikov:Entropy:2016}, neural networks \cite{ZunigaGalindo:JNMP:2024}, data science \cite{Bradley:arXiv:2024,Bradley:JoC:2025}, and geophysics \cite{Kochubei:UkrMathJ:2018, Khrennikov:JFAA:2016b}. Recent works that utilized the Vladimirov equation to describe protein molecule dynamics have received experimental confirmation~\cite{ABZ:ProcSteklov:2014, BZ:Physica:2021}. In addition, $p$-adic analogues of classical stochastic processes are of independent interest in number theory, where they promise to bring tools of random matrix theory to bear on arithmetic questions \cite{Peski:arXiv:2021, Peski:arXiv:2024a, Peski:arXiv:2023, Peski:arXiv:2024b}. 

Hierarchical models serve as toy models of statistical mechanics \cite{Easo:JMP:2024}. Possible links between $p$-adic diffusion and percolation criticality on hierarchical lattices further motivate the present study. Recently, $p$-adic constructions have been used as a starting point in percolation theory in the form of hierarchical lattices for long-range Bernoulli percolation \cite{Hutchcroft:arXiv:2025II,Hutchcroft:arXiv:2025I,Hutchcroft:arXiv:2025III,Abdesselam:arXiv:2018}. Arguably, the most famous example of ultrametricity is in the energy landscapes of spin glasses \cite{Panchenko:AnnMath:2013}, the theory of which has found widespread interdisciplinary application \cite{Macy:npjComplexity:2024, Megard:WSLNP:nd}.  Since near-optima are organized as an ultrametric tree \cite{Auffinger:arXiv:2022}, the study of $p$-adic stochastic processes may shed light on dynamics in spin glasses and related systems \cite{Avetisov_Bikulov_Kozyrev:JPA:1999,Avetisov:JPA:2002,Avetisov:JPA:2003}. 

Section~\ref{Sec:LimitingSpacesandDisApp} develops a framework on $\LCAG$ for choosing a discrete group, a primitive random walk, and a spatial scaling. Its review of quadratic field extensions motivates subsequent development in $\Qp^2$. Section~\ref{Sec:One-Comp} extends earlier work~\cite{WJPA} in the $\Qp$ setting by introducing a simple perturbation that adjusts the diffusion coefficient of the limit without altering space-time scaling. The principle result of this section is Theorem~\ref{Sec:Con:Theorem:MAIN}.  Section~\ref{Sec:Prim} reviews the basic facts about the specialization of the process discussed earlier \cite{W:Expo:24} to the present setting, but simplifies some of the proofs.

Section~\ref{Sec:Perturbed} introduces a one-parameter family of two-component hierarchical random walks with anisotropy parameter $h$ in $(0,1]$ and establishes their basic properties. It shows in Theorem~\ref{Sec:Con:Theorem:MAIN2} that each member has a scaling limit given by a L\'{e}vy process on $\Qp^2$. This extends the known setting in which diffusion in the non-Archimedean setting is a scaling limit.  Anisotropy in the discrete coordinate processes detects metric symmetries and controlled asymmetries of the limit. Section~\ref{Sec:Components} proves that as $h$ approaches the isotropic right endpoint, the probabilities for jumping into shells for the random walks depend continuously on $h$, but lead to non-uniformity within shells in limit.  Theorem~\ref{thm:componentCrit} quantifies an effect on the continuum limit by showing that the diffusion coefficients of the limiting coordinate processes exhibit a jump. The mechanism is geometric: an alternating two-scale filtration collapses when $h$ is equal to $1$, changing shell combinatorics and forcing the discontinuity. This furnishes a toy model for critical behavior. Although the exposition focuses on $\Qp^2$, the constructions should extend to objects in $\LCAG$ that admit a compact open subgroup.


\section{Limiting spaces and their discrete approximations}\label{Sec:LimitingSpacesandDisApp}



\subsection{The idea of a Pontryagin filtration}


For any object $G$ in $\LCAG$, its Pontryagin dual $\widehat{G}$ is the group $\Hom(\G\colon \TT)$ endowed with the compact-open topology, where $\TT$ is the group of unit complex numbers.  The elements of $\widehat{G}$ are the characters of $G$.  For any subset $U$ of $G$, the \emph{annihilator} of $U$ is %
\begin{equation}
U^\perp\coloneqq\Big\{\xi\in\widehat{G}\colon\xi(u)=1,\; \forall u\in U\Big\}.
\end{equation}
The \emph{annihilator operation} takes $U$ to $U^\perp$ and reverses inclusion.

\begin{definition}[Regular filtration]
A \emph{filtration} for $G$ is a non-empty countable family $S$ of compact open subgroups of $G$ forming a neighborhood base at the identity and totally ordered by inclusion.  Groups $A$ and $A^\prime$ in a filtration $S$ for $G$ are \emph{adjacent} if there is no $B$ in $S$ that satisfies \[A\subset B\subset A^\prime.\]  A filtration for $G$ is a \emph{regular filtration} if for any adjacent $A$ and $A^\prime$ in $S$, the order $[A^\prime\colon A]$ of $A^\prime\slash A$ is finite, and
\begin{equation*}
N(S)\coloneqq [A^\prime\colon A]
\end{equation*}
is independent of the adjacent pair.  Refer to $N(S)$ as the \emph{index factor} of the filtration. 
\end{definition}

\begin{definition}[Pontryagin filtration]
Regular filtrations $\Pon(G)$ for $G$ and $\Pon(\widehat{G})$ for $\widehat{G}$ are \emph{annihilator compatible} if $U^\perp$ is in $\Pon(\widehat{G})$ for every $U$ in $\Pon(G)$ and $V^\perp$ is in $\Pon(G)$ for every $V$ in $\Pon(\widehat{G})$.  A \emph{Pontryagin filtration} for $\pair{G}$ is a pair $\big(\!\Pon(G),\Pon(\widehat{G})\big)$ of annihilator compatible, regular filtrations for $G$ and $\widehat{G}$, respectively.
\end{definition}

\begin{proposition}\label{prop:indexoffiltration}
For any Pontryagin filtration $\big(\!\Pon(G),\Pon(\widehat{G})\big)$ for a dual pair $\pair{G}$ of objects in $\LCAG$,
\begin{equation}
N\big(\!\Pon(G)\big)=N\big(\!\Pon(\widehat{G})\big).
\end{equation}
\end{proposition}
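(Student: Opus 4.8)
The plan is to show that the annihilator operation carries one adjacent pair in $\Pon(G)$ to an adjacent pair in $\Pon(\widehat{G})$ of the same index, and then to invoke the regularity of both filtrations to promote this single comparison to the asserted equality. Fix an adjacent pair $A\subset A'$ in $\Pon(G)$, which exists because $\Pon(G)$ is a regular filtration. Annihilator compatibility places $A^\perp$ and $A'^\perp$ in $\Pon(\widehat{G})$, and since the annihilator operation reverses inclusion we have $A'^\perp\subseteq A^\perp$. I would use the biduality $A^{\perp\perp}=A$, valid because $A$ is closed (every compact open subgroup is closed), to see that this inclusion is strict: if $A^\perp=A'^\perp$ then $A=A^{\perp\perp}=A'^{\perp\perp}=A'$, contradicting $A\subsetneq A'$.

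Next I would establish that the index is preserved, i.e. $[A^\perp\colon A'^\perp]=[A'\colon A]$. The short exact sequence $0\to A'/A\to G/A\to G/A'\to 0$ dualizes, by the exactness of Pontryagin duality on $\LCAG$, to $0\to\widehat{G/A'}\to\widehat{G/A}\to\widehat{A'/A}\to 0$. Under the canonical identifications $\widehat{G/A}\cong A^\perp$ and $\widehat{G/A'}\cong A'^\perp$ this becomes $0\to A'^\perp\to A^\perp\to\widehat{A'/A}\to 0$, so that $A^\perp/A'^\perp\cong\widehat{A'/A}$. Because $A'/A$ is a finite abelian group (regularity guarantees $[A'\colon A]<\infty$) and a finite abelian group has the same order as its dual, $[A^\perp\colon A'^\perp]=|\widehat{A'/A}|=|A'/A|=[A'\colon A]$.

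It then remains to check that $A'^\perp$ and $A^\perp$ are adjacent in $\Pon(\widehat{G})$. Suppose $B\in\Pon(\widehat{G})$ satisfies $A'^\perp\subsetneq B\subsetneq A^\perp$. Annihilator compatibility gives $B^\perp\in\Pon(G)$, and applying the inclusion-reversing annihilator together with biduality yields $A=A^{\perp\perp}\subsetneq B^\perp\subsetneq A'^{\perp\perp}=A'$, contradicting the adjacency of $A$ and $A'$. Hence $A'^\perp$ and $A^\perp$ are adjacent, and the regularity of $\Pon(\widehat{G})$ forces $N(\Pon(\widehat{G}))=[A^\perp\colon A'^\perp]=[A'\colon A]=N(\Pon(G))$. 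The main obstacle is the index-preservation step of the middle paragraph: everything reduces to the two standard duality facts — that $A^{\perp\perp}=A$ for closed $A$ and that dualizing the finite quotient $A'/A$ preserves its order — so the work lies in assembling these cleanly and in confirming that all the relevant inclusions remain strict.
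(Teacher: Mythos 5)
Your proof is correct and follows essentially the same route as the paper's: both dualize the short exact sequence $0\to A'/A\to G/A\to G/A'\to 0$ to identify $A^{\perp}/(A')^{\perp}$ with the dual of the finite group $A'/A$, and then conclude from the fact that a finite abelian group has the same order as its Pontryagin dual. The only differences are in bookkeeping: the paper justifies exactness of the dualization elementarily, via divisibility (hence injectivity) of $\TT$ in the category of abstract abelian groups, whereas you invoke exactness of Pontryagin duality on $\LCAG$ (valid here since all three groups in the sequence are discrete), and you spell out the strictness and adjacency-preservation steps via biduality $A^{\perp\perp}=A$, which the paper compresses into a single introductory sentence.
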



\begin{proof}
Annihilator compatibility of the Pontryagin filtration and the reversal of inclusion under the annihilator operation together imply that the annihilator operation preserves adjacency.  

Take $A$ and $A^\prime$ to be adjacent in $\Pon(G)$, and $A$ to be a subgroup of $A^\prime$. Every homomorphism from a discrete group to $\TT$ is
continuous, so $\Hom_{\Abelian}(-,\TT)$, the $\Hom$ functor in the category $\Abelian$ of abelian groups, agrees with $\Hom(-,\TT)$ on discrete groups.  Universality of the quotient implies that %
\[
(A^\prime)^\perp \cong \Hom(G\slash A^\prime,\TT) \quad \text{and} \quad A^\perp \cong \Hom(G\slash A,\TT).
\] %
Since $A$ and $A^\prime$ are open, the groups $A^\prime\slash A$, $G\slash A$, and $G\slash A^\prime$ are discrete. Since $\TT$ is divisible, Baer's criterion implies that it is injective \cite[Theorem~21.1]{Fuchs:IAG:1970}, hence the functor $\Hom_{\Abelian}(-,\TT)$ is exact \cite[Proposition~4.6]{CartanEilenberg:HA:1956}, and therefore takes the exact sequence 
\[
0\longrightarrow A^\prime\slash A\longrightarrow G\slash A\longrightarrow G\slash A^\prime\to0
\]
to the exact sequence
\[
0\longrightarrow (A^\prime)^\perp\longrightarrow A^\perp \longrightarrow \Hom(A^\prime\slash A, \TT)\longrightarrow 0.
\]
Exactness of the sequence implies that $A^{\perp}\slash (A^\prime)^\perp$ is isomorphic to $\Hom(A^\prime\slash A, \TT)$ in $\Abelian$. Both $A$ and $A^\prime$ are compact open, so $A^\prime\slash A$ is finite, and Pontryagin duality for finite abelian groups implies that
\[
\#\big(A^{\perp}\slash (A^\prime)^\perp\big) =\#\Hom(A^\prime\slash A,\TT) = \#(A^\prime\slash A) = [A^\prime\colon A].
\]
\end{proof}


\begin{definition}[Self-dual Pontryagin filtration]
For any self-dual $G$ in $\LCAG$ with a fixed isomorphism $\Psi$ from $G$ to $\widehat{G}$, a regular filtration $\Pon(G)$, if it exists, is a \emph{Pontryagin filtration for $G$ relative to $\Psi$} if $\Psi(U^\perp)$ is in $\Pon(G)$ whenever $U$ is in $\Pon(G)$.
\end{definition}

The isomorphism $\Psi$ in the definition of a Pontryagin filtration $\Pon(G)$ for a self-dual group $G$ makes it possible to regard $\Pon(G)$ as closed under the annihilator operation. However, this notion of annihilator compatibility depends on the chosen isomorphism.  Indeed, if $\Psi_1$ and $\Psi_2$ are two isomorphisms from $G$ to $\widehat{G}$ and $\Pon(G)$ is a Pontryagin filtration for $G$ relative to $\Psi_1$, then for
\[
\Psi = \Psi_2^{-1} \!\circ \Psi_1
\]
the filtration
\[
\Psi(\Pon(G)) = \{\Psi(U)\colon U \in \Pon(G)\}
\]
is a Pontryagin filtration for $G$ relative to $\Psi_2$. For any set $S$, denote by $\indicator{S}$ its indicator function.

\begin{lemma}\label{lem:LCAG:Int}
For any $G$ in $\LCAG$, take $\mu_G$ to be a Haar measure on $G$. For any compact open subgroup $U$  of $G$ and any $\xi$ in $\widehat{G}$, %
\[
\int_U \xi(x) \,\d\mu_G(x)\;=\;\Vol(U)\,\indicator{U^\perp}(\xi).
\]
\end{lemma}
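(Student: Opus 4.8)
The plan is to dispose of the identity by a dichotomy on whether the character $\xi$ annihilates $U$, that is, on whether $\xi \in U^\perp$. Before splitting into cases I would first record that the integral is a well-defined complex number: since $U$ is compact and open, $\Vol(U) = \mu_G(U)$ is finite and strictly positive, and $\xi$ is continuous and $\TT$-valued, hence bounded on $U$. This finiteness is precisely what will license the cancellation in the second case.

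In the first case, suppose $\xi \in U^\perp$. By the definition of the annihilator $\xi(x) = 1$ for every $x \in U$, so the integrand is identically $1$ and $\int_U \xi\, \d\mu_G = \Vol(U)$. Since $\indicator{U^\perp}(\xi) = 1$ here, the two sides agree.

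In the second case, suppose $\xi \notin U^\perp$, so that there is some $u_0 \in U$ with $\xi(u_0) \neq 1$. Here I would exploit that $U$ is a subgroup: translation by $u_0$ carries $U$ bijectively onto $u_0 + U = U$, so translation invariance of $\mu_G$ together with the homomorphism property $\xi(x + u_0) = \xi(x)\xi(u_0)$ gives
\[
\int_U \xi(x)\, \d\mu_G(x) = \int_U \xi(x + u_0)\, \d\mu_G(x) = \xi(u_0)\int_U \xi(x)\, \d\mu_G(x).
\]
Rearranging to $\big(1 - \xi(u_0)\big)\int_U \xi\, \d\mu_G = 0$ and using $\xi(u_0) \neq 1$ forces the integral to vanish, matching $\indicator{U^\perp}(\xi) = 0$.

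There is no deep obstacle here; the argument is the classical orthogonality-of-characters cancellation. The only points demanding attention are the well-definedness recorded at the outset — so that the common factor $\int_U \xi\, \d\mu_G$ may legitimately be cancelled — and the requirement that the translating element $u_0$ be drawn from $U$ itself, ensuring the substitution preserves the domain of integration rather than relocating it to a nontrivial coset.
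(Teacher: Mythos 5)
Your proof is correct and follows essentially the same route as the paper: the dichotomy on $\xi \in U^\perp$, with the trivial case giving $\Vol(U)$ and the nontrivial case handled by translating by an element $u_0 \in U$ with $\xi(u_0) \neq 1$ and cancelling the finite integral. The extra remarks on well-definedness and on $u_0$ lying in $U$ (so that $u_0 + U = U$) are sound and only make explicit what the paper leaves implicit.
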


\begin{proof}
Since $U$ is compact open, its volume is positive and finite. For any $\xi$ in $\widehat{G}$, take \[I(\xi)\coloneqq\int_U \xi(x)\,\d\mu_G(x).\]
If $\xi$ is in $U^\perp$, then \[\xi\big|_U\equiv 1,\quad \text{hence} \quad I(\xi)=\int_U 1\,\d\mu_G=\Vol(U).\]  If $\xi$ is not in $U^\perp$, then there exists $u_0$ in $U$ with $\xi(u_0)$ not equal to $1$.  Translation invariance of the Haar measure implies that
\begin{align*}
I(\xi) &=\int_U \xi(x)\,\mathrm d\mu_G(x)\\
&=\int_U \xi(x+u_0)\,\mathrm d\mu_G(x)\\
&=\xi(u_0)\int_U \xi(x)\,\mathrm d\mu_G(x) =\xi(u_0)\,I(\xi).
\end{align*}
Since $\xi(u_0)$ is not equal to $1$, $I(\xi)$ is equal to $0$.
\end{proof}


\subsection{Harmonic analysis on $\Qp$}


See the classic book of Vladimirov, Volovich, and Zelenov \cite{vvz} for a general introduction to $p$-adic analysis and $p$-adic mathematical physics to provide further background.  

For any $k$ in $\ZZ$, the compact open sets %
\begin{equation}
\Ball(k) \coloneqq \{x\in\Qp\colon |x|_p \le p^{k}\} \quad \text{and} \quad \Shell(k) \coloneqq \{x\in\Qp\colon |x|_p = p^{k}\}
\end{equation}
are the centered closed ball and its shell of radius $p^{k}$, respectively. The ball $\Ball(0)$ is the ring of $p$-adic integers $\Zp$, the unique maximal compact subring of $\Qp$.  It contains the integers $\ZZ$ as a dense subset and its unique maximal ideal is $p\Zp$, the set $\{x\colon |x|_p<1\}$.  The residue field $\Zp/p\Zp$ has $p$ elements.

For any $x$ in $\Qp$ there is a unique function $a_x$ from $\ZZ$ to $\{0, 1, \dots, p-1\}$ and an integer $K$ so that $a_x$ has support in $[K, \infty)\cap\ZZ$ and
\begin{equation}
x\coloneqq\sum_{i\in\ZZ} a_x(i)\,p^{i}.
\end{equation}
If $x$ is nonzero, then the $p$-adic valuation of $x$ is the integer $v_p(x)$, where \[v_p(x)=\min\{i\in\ZZ\colon a_x(i)\ne 0\}.\]  The $p$-adic valuation determines the $p$-adic absolute value by \[|x|_p=p^{-v_p(x)} \quad \text{and} \quad |0|_p = 0.\]  For any $x$ in $\Qp$, the \emph{fractional part} of $x$ is the rational number \[\{x\}_p = \sum_{k< 0}a_x(k)p^k.\]  Although $\{\cdot\}_p$ is not additive, it is constant on cosets of $\Zp$.  Take $\chi$ to be the character \[\chi(x) = \exp\big(2\pi \sqrt{-1}\{x\}_p\big), \quad (\forall x\in\Qp)\] which is trivial on $\Zp$ and nontrivial on $p^{-1}\Zp$.  For any character $\psi$ on $\Qp$, there is a unique $y$ in $\Qp$ so that for any $x$ in $\Qp$,
\begin{equation}
\psi(x) = \chi(xy),
\end{equation}
which identifies $\Qpdual$ with $\Qp$ as objects in $\LCAG$.

Take $\mu$ to be the Haar measure on $(\Qp,+)$, normalized to give $\Zp$ unit mass. Translation invariance of $\mu$ together with the fact that the residue field $\Zp\slash p\Zp$ has $p$ elements implies that for any $i$ in $\ZZ$,
\begin{equation}\label{eq:Ball1DMeasure}
\mu(x+\Ball(i))=p^{\,i},\qquad
\mu(x+\Shell(i))=\mu((x+\Ball(i))\smallsetminus(x+\Ball(i-1)))=p^{i}(1-p^{-1}).
\end{equation}
Henceforth, for any subset $A$ of any $G$ in $\LCAG$ and any specified Haar measure $\mu_G$ on $G$, take $\Vol(A)$ to mean $\mu_G(A)$, and for any $g$ in $G$, compress notation by writing $\d g$ rather than $\d\mu_G(g)$ whenever there is no ambiguity in doing so.

The Fourier transform $\Fourier$ and its inverse $\invFourier$ are the unitary transforms on $L^{2}(\Qp)$ whose restrictions to $L^{1}(\Qp)\cap L^{2}(\Qp)$ are given by 
\begin{equation}
\Fourier f(y)\coloneqq\int_{\Qp}\chi(-xy)f(x)\,\d x \quad \text{and} \quad \invFourier g(x)\coloneqq\int_{\Qp}\chi(xy)g(y)\,\d y.
\end{equation}
Use $\chi$ to identify $\Ball(i)^{\perp}$ as a ball in $\Qp$, so that
\begin{equation}\label{Eq:dim1:AnnBallisBall}
\Ball(i)^{\perp}=\{y\in\Qp\colon \chi(xy)=1\; \text{ for all }x\in\Ball(i)\} = \Ball(-i).
\end{equation}

Lemma~\ref{lem:LCAG:Int} implies that the Fourier transform takes indicator functions on centered balls to weighted indicator functions on centered balls, and \eqref{Eq:dim1:AnnBallisBall} implies that the family of centered balls 
\begin{equation}
\Pon(\Qp)\coloneqq\{\Ball(i)\colon i\in\ZZ\}
\end{equation} %
forms a \emph{Pontryagin filtration} of $\Qp$ with respect to the isomorphism from $\Qp$ to its dual given by the (rank-$0$) character $\chi$.  The index factor of $\Pon(\Qp)$ is $p$, the order of the residue field $\Zp\slash p\Zp$.


\subsection{Quadratic field extensions of $\Qp$}


In the course of developing harmonic analysis on $\Qp^2$, first treat the case of a quadratic field extension $\Kfield\slash\Qp$; the field case fixes certain choices and makes the general theory transparent. In the field case, the multiplicative structure fixes two ingredients: an additive character via the trace pairing and, from the valuation, a canonical Pontryagin filtration compatible with the character. Transporting these structures through a suitable $\Qp$-linear isomorphism $T$ from $\Qp^2$ to $\Kfield$ yields on $\Qp^2$ an induced character and Pontryagin filtration; in the unramified case, the sets in the filtration are max-norm balls (with constant index factor $p^2$), while in the totally ramified case, they become weighted max-norm balls (with constant index factor $p$).

View $\Kfield$ as a vector space over $\Qp$ and any $a$ in $\Kfield$ as indicating the $\Qp$-linear transformation from $\Kfield$ to itself given by multiplication by $a$, so that it makes sense to define $\det(a)$ and $\tr(a)$ as the determinant and trace of that linear transformation, respectively.  Define
\[
|a|_\Kfield \coloneqq \bigl|\det(a)\bigr|_p^{\frac{1}{2}}
\]
to be the absolute value of $a$ in $\Kfield$.  This extends the absolute value $|\cdot|_p$ on $\Qp$, since $\det(a)$ is equal to $a^2$ for any $a$ in $\Qp$. The centered unit ball $\{x\in\Kfield\colon|x|_\Kfield\le1\}$ is the maximal compact subring of $\Kfield$, called the \emph{ring of integers} of $\Kfield$ and denoted $\OK$; normalize the additive Haar measure $\mu_{\Kfield}$ on $\Kfield$ so that $\OK$ has mass $1$. The unique maximal ideal of $\OK$ is
\[
\PK \coloneqq \{x\in\Kfield\colon|x|_\Kfield<1\},
\]
and the residue field $\OK\slash\PK$ is finite. Any element $\beta$ with \[\beta\OK=\PK\] is a \emph{uniformizer}.  

For quadratic extensions there are two cases. In the \emph{unramified} case, the residue field has size $p^2$ and $p$ is a uniformizer; in particular \[|\beta|_\Kfield=p^{-1}\] for any uniformizer $\beta$.  In the \emph{totally ramified} case, the residue field has size $p$ and any uniformizer $\beta$ has absolute value \[|\beta|_\Kfield=p^{-1/2}, \quad \text{hence}\quad |\beta^{-1}|_\Kfield=p^{1/2}.\]  The two possible value groups $|\Kfield^\times|_\Kfield$ for a quadratic field extension $\Kfield$ are $p^{\ZZ}$ in the unramified case, in which case it is unchanged from $|\Qp^\times|_p$, and $p^{\frac{1}{2}\ZZ}$ in the totally ramified case. 

Fix a uniformizer $\beta$ and a complete set of coset representatives $\mathcal{U}$ of $\OK \slash \PK$ in $\OK$.  Any element $x$ of $\Kfield$ is in $\beta^k \OK$ for some integer $k$.  As in the $\Qp$ setting, there is a unique coefficient function $a_x$ so that \[x = \sum_{i\in\ZZ} a_x(i) \beta^i,\] where the coefficients $a_x(i)$ are in $\mathcal{U}$ and there is an integer $K$ so $a_x$ has support in $[K, \infty)$.  In the example where $\Kfield$ is $\Qp$, the ring of $p$-adic integers $\Zp$ is $\OK$, the prime ideal is $p \Zp$, the residue field is the field with $p$ elements, the prime $p$ is a uniformizer, and the coset representatives may be taken to be the rational numbers \[\mathcal{U} = \{0, \ldots, p - 1\}.\]

Denote by $\dualKfield$ the Pontryagin dual of the additive group of $\Kfield$.  Fix the previously defined rank-$0$ base character $\chi$ on $\Qp$ and choose $\theta$ in $\Kfield^\times$ so that the additive character $\psi$ on $\Kfield$ given by
\[
\psi(x) = \chi\big(\tr(\theta x)\big)
\]
has $\OK$ as its kernel (rank-$0$). 
For any $y$ in $\Kfield$, define the isomorphism from $\Kfield$ to $\dualKfield$ that takes $y$ to the character $\psi_y$, defined for all $x$ in $\Kfield$ by \[\psi_y(x) = \psi(xy).\] %


\subsection{The unramified case}


In the case of an unramified field extension of $\Qp$, there is an element $\pi$ in $\Kfield$ so that for some unit $u$ in $\Zp^\times$, \[\pi^2 = u, \quad \text{and} \quad \Kfield = \Qp(\pi).\] The set $\{1,\pi\}$ is a $\Zp$-basis of $\OK$.  

Use the notation $\ve{x}$ to identify an ordered pair \[\ve{x} = (x_1, x_2)\] in $\Qp^2$, and take $T$ to be the $\Qp$-linear isomorphism \[T:\Qp^{2}\to\Kfield, \quad\text{given by} \quad T(x_1,x_2)=x_1+x_2\pi.\]  %
For any $a$ and $b$ in $\Qp$, %
\begin{equation}\label{trace}
\tr(a+b\pi) = \tr\left(\begin{pmatrix}
a & 0\\
0 & a
\end{pmatrix}+\begin{pmatrix}
0 & bu\\
b & 0
\end{pmatrix}\right) = 2a.
\end{equation} %

Take $\theta$ to be $\frac{1}{2}u^{-1}\pi$. For any $(a,b)$ in $\Qp^2$, 
\begin{align*}
\tr(\theta(a+b\pi)) &= \tfrac{1}{2}\tr(u^{-1}\pi(a+b\pi))\\
 &= \tfrac{1}{2}\tr(b+u^{-1}a\pi) = b,
\end{align*}
hence \[\psi(a+b\pi) = \chi\big(\tr(\theta(a+b\pi))\big) = \chi(b).\]

Since $\tr(\theta x)$ is in $\Zp$ for $x$ in $\OK$ but not for all $x$ in $p^{-1}\OK$, the character $\psi$ is trivial on $\OK$ and nontrivial on $p^{-1}\OK$.  It induces a dual pairing $\langle \blank, \blank\rangle$ on $\Qp^2$ in the following way:  for any $(a,b)$ and $(c,d)$ in $\Qp^2$, 
\begin{equation}\label{eq:Qp2dualPairing:UnRam}
\langle (a,b), (c,d)\rangle = \psi(T(a,b)T(c,d)) = \chi(ad+bc).
\end{equation}

Define the max-norm on $\Qp^{2}$ by
\[
\|(x_1,x_2)\|_{\max}\coloneqq \max\big(|x_1|_p,|x_2|_p\big),
\]
and parameterize its associated centered balls by their volume, so that
\[
\Ballt(2k)\coloneqq p^{-k}\Zp\times p^{-k}\Zp = \Big\{(x_1,x_2)\in\Qp^{2}\colon \|(x_1,x_2)\|_{\max}\le p^{k}\Big\}.
\]
With the rank-$0$ additive character $\psi$, the annihilator of $T(\Ballt(2k))$ is the dual ball
\[
T(\Ballt(2k))^\perp=T(p^{k}\Zp\times p^{k}\Zp)=T(\Ballt(-2k)),
\]
and so the collection of centered balls \[\Pon_{\max}(\Kfield) = \big\{T(\Ballt(2k))\colon k\in\ZZ\big\}\] is a Pontryagin filtration with respect to the isomorphism given by the character $\psi$ when $\Kfield$ is an unramified quadratic field extension of $\Qp$.


\subsection{The totally ramified case}


In the case where $\Kfield$ is a totally ramified quadratic field extension of $\Qp$, choose a uniformizer $\beta$ with \[|\beta|_\Kfield=p^{-\frac{1}{2}}\] and identify $\Qp^2$ with $\Kfield$ via the basis $\{1, \beta\}$.   Take $T$ to be the $\Qp$-linear isomorphism \[T:\Qp^{2}\to\Kfield, \quad\text{given by} \quad T(x_1,x_2)=x_1+x_2\beta.\]  There is a unit $u$ in $\Zp^\times$ so that $\beta^2$ is equal to $up$. Take $\theta$ to be $\frac{1}{2}\beta^{-1}$ so that, as in the unramified case, for any $(a,b)$ in $\Qp^2$, 
\[
\psi(a+b\beta) = \chi(b).
\]

Since $\tr(\theta x)$ is in $\Zp$ for $x$ in $\OK$ but not for all $x$ in $\beta^{-1}\OK$, the character $\psi$ is trivial on $\OK$ and nontrivial on $\beta^{-1}\OK$ (rank-$0$).  It induces a dual pairing $\langle \blank, \blank\rangle$ on $\Qp^2$ in the following way:  for any $(a,b)$ and $(c,d)$ in $\Qp^2$,
\begin{equation}\label{eq:Qp2dualPairing:TotRam}
\langle (a,b), (c,d)\rangle = \psi(T(a,b)T(c,d)) = \chi\big(ad+bc\big).
\end{equation}

Define the weighted max-norm on $\Qp^{2}$ by
\[
\|(x_1,x_2)\|_{\wt}\coloneqq \max\big(|x_1|_p,p^{-\frac{1}{2}}|x_2|_p\big).
\]
For any $k$ in $\No$ and $\varepsilon$ in $\{0,1\}$, parameterize the associated weighted max-norm balls by their volume, so that
\[
\Ballt(2k+\varepsilon) \coloneqq p^{-k}\big(\Zp\times p^{-\varepsilon}\Zp\big) = \Big\{\ve{x}\in \Qp^2\colon \|\ve{x}\| \leq p^{k+\frac{\varepsilon}{2}}\Big\}.\]  The transformation $T$ takes balls $\Ballt(1)$ and $\Ballt(-1)$, given by \[\Ballt(1) = \Zp\times p^{-1}\Zp \quad \text{and} \quad \Ballt(-1) = p\Zp\times \Zp = p\big(\Zp\times p^{-1}\Zp\big),\] to dual pairs, and \[T(\Ballt(0)) = T(\Zp \times \Zp) = T(\Ballt(0))^\perp.\]  Furthermore, for any $\varepsilon$ in $\{0,1\}$ and any $k$ and $\ell$ in $\ZZ$, \[T\big(p^k\Ballt(2\ell+\varepsilon)\big)^\perp = T\big(p^{-k}\big(\BallK(2\ell+\varepsilon)^\perp\big)\big),\] and so the collection of balls \[\Pon_{\wt}(\Kfield) \coloneqq \Big\{T\big(\Ballt\big(2k+\varepsilon\big)\big)\colon k\in\ZZ, \varepsilon\in\{0,1\}\Big\}\] is a Pontryagin filtration with respect to the isomorphism given by the character $\psi$ when $\Kfield$ is a totally ramified quadratic field extension of $\Qp$.


\subsection{Harmonic analysis on $\Qp^2$}


The shape of balls and circles in $\Qp^2$ depends on the choice of metric, but the continuous additive characters on $\Qp^2$ depend only on the topology and group structure of $\Qp^2$. With this in mind, define on $\Qp^2$ for any $h$ in $(0,1]$ the norm 
\[
\|(x_1,x_2)\|_{h}\coloneqq \max\Big(|x_1|_p,p^{h-1}|x_2|_p\Big).
\]
In the case that $h$ is $1$, this is the max-norm.  Refer to it as the weighted max-norm when $h$ is not $1$.  It is helpful to make the distinction because of the structural difference between the two settings.

For any two pairs $(a,b)$ and $(c,d)$ in $\Qp$, define a dual pairing on $\Qp^2$ by \eqref{eq:Qp2dualPairing:UnRam} (or  \eqref{eq:Qp2dualPairing:TotRam}). Since the $\Qp$-linear transformation $T$ in either the ramified or unramified field setting transfers structure between the field extensions and $\Qp^2$, for any $k$ in $\ZZ$, if $h$ is in $(0,1]$, then \begin{equation}\label{Eq:DualBallnoe:identification:qp2}\Ballt(2k)^\perp = \Ballt(-2k) = p^{k}\Zp\times p^{k}\Zp.\end{equation}  If $h$ is in $(0,1)$, then \begin{equation}\label{Eq:DualBalle:identification:qp2}\Ballt(2k+1)^\perp = \Ballt(-2k-1) = p^{k+1}\Zp\times p^{k}\Zp.\end{equation}   Henceforth, take $h$ to be in $(0,1)$.  

Equalities \eqref{Eq:DualBallnoe:identification:qp2} and \eqref{Eq:DualBalle:identification:qp2} together guarantee that \[\Pon_{\rm c}(\Qp^2) = \{\Ballt(2k)\colon k\in \ZZ\}\quad \text{and}\quad \Pon_{\rm f}(\Qp^2) = \{\Ballt(2k+\varepsilon)\colon k\in \ZZ, \varepsilon\in\{0,1\}\}\] are both Pontryagin filtrations of $\Qp^2$ where the isomorphism between $\Qp^2$ and its Pontryagin dual is the one induced by the dual pairing $\langle \blank, \blank\rangle$.  Equip $(\Qp^2, \|\blank\|_{\max})$ with $\Pon_{\rm c}(\Qp^2)$, and $(\Qp^2, \|\blank\|_h)$ with $\Pon_{\rm f}(\Qp^2)$, when $h$ is not $1$.  These are, respectively, the \emph{coarse} and \emph{fine} Pontryagin filtrations of $\Qp^2$.  

It is useful to relate the volume parameter for identifying centered balls in $\Qp^2$ to the radius of the ball.  For any ball $B$ in $\Qp^2$ in either the max-norm or weighted max-norm settings, denote by $\rad(B)$ the radius of $B$.  Equations \eqref{Eq:DualBallnoe:identification:qp2} and \eqref{Eq:DualBalle:identification:qp2} together imply Proposition~\ref{Prop:DualityRadius:Continuous}.

\begin{proposition}\label{Prop:DualityRadius:Continuous}
In the case of the max-norm or weighted max-norm, \[\rad\!\big(\Ballt(2k)\big) = p^{k} \quad \text{and} \quad \rad\!\big(\Ballt(2k)^\perp\big) = p^{-k}.\]  In the case of the weighted max-norm, \[\rad\!\big(\Ballt(2k+1)\big) = p^{k+h} \quad \text{and} \quad \rad\!\big(\Ballt(2k+1)^\perp\big) = p^{-k+(h-1)}.\] 
\end{proposition}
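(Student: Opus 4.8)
The plan is to read each ball off explicitly as a product $p^{a}\Zp\times p^{b}\Zp$, and then to compute its radius as the supremum of the relevant norm over the ball, a supremum that for an ultrametric product ball factors coordinate-wise.

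First I would record the coordinate descriptions. The identifications \eqref{Eq:DualBallnoe:identification:qp2} and \eqref{Eq:DualBalle:identification:qp2}, after relabelling the index, give
\[
\Ballt(2k)=p^{-k}\Zp\times p^{-k}\Zp,\qquad \Ballt(2k+1)=p^{-k}\Zp\times p^{-k-1}\Zp,
\]
together with
\[
\Ballt(2k)^\perp=p^{k}\Zp\times p^{k}\Zp,\qquad \Ballt(2k+1)^\perp=p^{k+1}\Zp\times p^{k}\Zp.
\]
The only analytic input needed is the elementary fact that $\sup\{|x|_p:x\in p^{a}\Zp\}=p^{-a}$. For a centered product ball this yields max-norm radius $\max(p^{-a},p^{-b})$ and weighted max-norm radius $\max(p^{-a},p^{\,h-1-b})$, since the supremum of a $\max$ over a product is the $\max$ of the coordinate suprema.

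Next I would carry out the four evaluations, tracking how $0<h<1$ selects the dominant coordinate. For the even-index balls the two coordinate exponents coincide, so the weight $p^{h-1}<1$ (as $h<1$) never lets the second coordinate win, and both norms return $\rad(\Ballt(2k))=p^{k}$ and $\rad(\Ballt(2k)^\perp)=p^{-k}$. For the odd-index balls, which belong only to the fine filtration, the one-step asymmetry between the coordinates together with $h>0$ makes the weighted second coordinate dominant: $\max(p^{k},p^{\,h-1+(k+1)})=p^{k+h}$ and $\max(p^{-k-1},p^{\,h-1-k})=p^{-k+(h-1)}$, which are the claimed radii.

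There is no genuine obstacle here; the content is bookkeeping. The single point demanding care is consistency of conventions—applying the \emph{weighted} max-norm on the dual side as well, and keeping straight that the two endpoint constraints $h<1$ and $h>0$ are precisely what decide the maximizing coordinate in the symmetric and asymmetric cases, respectively.
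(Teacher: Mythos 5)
Your proposal is correct and is essentially the argument the paper intends: the paper gives no written proof, simply asserting that \eqref{Eq:DualBallnoe:identification:qp2} and \eqref{Eq:DualBalle:identification:qp2} imply the proposition, and your coordinate-wise computation of $\sup$ of the (weighted) max-norm over the product balls is exactly the bookkeeping being left to the reader. Your tracking of how $h<1$ decides the even-index cases and $h>0$ decides the odd-index cases is the right and only point of substance.
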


Normalize the Haar measure on $(\Qp^2,+)$ to give $\Zp^2$ unit mass.  The Fourier transform $\Fourier$ and inverse Fourier transform $\invFourier$ are the unitary operators on $L^{2}(\Qp^2)$ defined, respectively, on $L^1(\Qp^2)\cap L^2(\Qp^2)$ by
\[
\Fourier f(\ve{y})\coloneqq\int_{\Qp^2}\langle-\ve{x},\ve{y}\rangle f(\ve{x})\,\d \ve{x} \quad \text{and} \quad \invFourier g(\ve{x})\coloneqq\int_{\Qp^2}\langle\ve{x},\ve{y}\rangle g(\ve{y})\,\d \ve{y}.
\]
Define for any $k$ in $\ZZ$ the \emph{coarse shell} $\Shelltc(2k)$ of the ball $\Ballt(2k)$ by \begin{equation*}\Shelltc(2k) = \Ballt(2k)\smallsetminus\Ballt(2k-2),\end{equation*} so that \begin{equation}\label{eq:CoarseShellVolume}\begin{cases}\Vol(\Ballt(2k)) = p^{2k}\\\Vol(\Shelltc(2k)) = (1-p^{-2})\Vol(\Ballt(2k)).\end{cases}\end{equation}
Define for any $k$ in $\ZZ$ and $\varepsilon$ in $\{0,1\}$ the \emph{fine shell} $\Shelltf(2k+\varepsilon)$ of the ball $\Ballt(2k+\varepsilon)$ by \[\Shelltf(2k+\varepsilon) = \Ballt(2k+\varepsilon)\smallsetminus\Ballt(2k+\varepsilon-1),\] so that \begin{equation}\label{eq:FineShellVolume}\begin{cases}\Vol(\Ballt(2k+\varepsilon)) = p^{2k+\varepsilon}\\\Vol(\Shelltf(2k+\varepsilon)) = (1-p^{-1})\Vol(\Ballt(2k+\varepsilon)).\end{cases}\end{equation}


\subsection{The single-component discrete groups}


For any $m$ in $\No$, denote by $\Gm$ the quotient group $\Qp\slash p^m\Zp$, endowed with the counting measure $\mu_m$ that assigns mass $p^{-m}$ to each singleton set.  For any $x$ in $\Qp$, write
\[
[x]_m = x+p^m\Zp.
\]
Define the absolute value on $\Gm$ by
\[
|[x]_m|=
\begin{cases}
|x|_p & \text{if } [x]_m\neq[0]_m,\\[2pt]
0,& \text{if }  [x]_m=[0]_m,
\end{cases}
\]
and the metric by \[d(g_1,g_2) = |g_1-g_2|.\] For any $k$ in $\{-m,-m+1, \dots\}$, define the centered balls and their shells by
\[
\Ballm(k) = \{g\in \Gm\colon |g|\le p^k\} \quad \text{and} \quad \Shellm(k) = \{g\in \Gm\colon |g|=p^k\},
\]
with %
\[
\Ballm(-m)=\Shellm(-m)=\{[0]_m\}.\] %
Volumes with respect to $\mu_m$ are
\[
\vol\big(\Ballm(k)\big)=p^{k} \quad \text{and} \quad \vol\big(\Shellm(k)\big)=p^{k}\left(1-\tfrac{1}{p}\right).
\]

The pairing $\langle \cdot, \cdot \rangle_m$ on $\Gm \times p^{-m}\Zp$ given by %
\begin{equation}\label{eq:pairing-1d}
\langle [x]_m,y\rangle_m = \chi(x y)%
\end{equation} %
identifies the Pontryagin dual $\widehat{\Gm}$ with $p^{-m}\Zp$.  Local constancy of $\chi$ ensures that \eqref{eq:pairing-1d} is independent of choice of representative and Proposition~\ref{Prop:DualityScaling:DiscreteA} gives the relationship between balls and dual balls under the given dual pairing and implies that the collections of balls %
\[
\Pon\big(\Gm\big) = \{\Ballm(i)\colon i\in -m+\mathbb N_0\} \quad \text{and} \quad \Pon\big(\widehat{\Gm}\big) = \{p^{-m+i}\Zp\colon i\in\mathbb N_0\}.
\]
form a Pontryagin filtration for $\Gm$.  

\begin{proposition}\label{Prop:DualityScaling:DiscreteA} 
For any $k$ and $m$ in $\No$, \[\Big(\Ballm(k)\Big)^\perp = p^{k-m}\Zp \quad \text{and} \quad \Big(p^{k-m}\Zp\Big)^\perp = \Ballm(k).\] 
\end{proposition}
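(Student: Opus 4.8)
The plan is to pull the whole computation back to the ambient field $\Qp$, where the annihilator of a centred ball is already recorded in \eqref{Eq:dim1:AnnBallisBall}. The starting observation is that $\Ballm(k)$ is exactly the image of the genuine $\Qp$-ball $\Ball(k)=p^{-k}\Zp$ under the quotient map $q\colon\Qp\to\Gm$. Indeed, a nonzero class $[x]_m$ has $|[x]_m|=|x|_p$ for every representative, so $|[x]_m|\le p^{k}$ holds precisely when $x\in p^{-k}\Zp$; and since $k\ge0$ forces $p^{m}\Zp\subseteq p^{-k}\Zp=\Ball(k)$, the zero class is automatically swept in. Hence $\Ballm(k)=q(\Ball(k))$, and the annihilator condition on $\Gm$ can be tested one representative at a time through the pairing \eqref{eq:pairing-1d}.

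First I would unwind the definition of $(\Ballm(k))^\perp\subseteq\widehat{\Gm}$. Under the identification $\widehat{\Gm}=p^{-m}\Zp$, a point $y$ lies in the annihilator exactly when $\chi(xy)=1$ for every representative $x\in\Ball(k)$, which is by definition the membership $y\in\Ball(k)^\perp$ in the ambient dual $\widehat{\Qp}=\Qp$; the triviality of $\chi$ on $\Zp$ together with $y\in p^{-m}\Zp$ makes this condition well posed on classes, as already noted for \eqref{eq:pairing-1d}. This produces the two-layer identity
\[
(\Ballm(k))^\perp \;=\; \Ball(k)^\perp\cap p^{-m}\Zp ,
\]
expressing the annihilator in $\widehat{\Gm}$ as the ambient $\Qp$-annihilator cut down to the dual group. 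Now \eqref{Eq:dim1:AnnBallisBall} (the content of Lemma~\ref{lem:LCAG:Int} for $U=\Ball(k)$) gives $\Ball(k)^\perp=\Ball(-k)$, and since $k\ge0$ this ball already sits inside $p^{-m}\Zp$, so the intersection is the full ball $\Ball(-k)$; I would then match this to the asserted element of the dual filtration $\Pon(\widehat{\Gm})$.

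For the second equality I would invoke Pontryagin biduality: the annihilator operation is involutive on closed subgroups, and $\Ballm(k)=q(\Ball(k))$ is a finite subgroup of the discrete group $\Gm$, so $(\Ballm(k))^{\perp\perp}=\Ballm(k)$, and applying $\perp$ to the first equality yields the second; alternatively one reruns the symmetric reduction, pulling the annihilator of the dual ball back to a $\Qp$-ball via \eqref{Eq:dim1:AnnBallisBall} and reading off its image in $\Gm$. I expect the only real obstacle to be the index bookkeeping in the two-layer identity — correctly tracking the restriction to $p^{-m}\Zp$ and aligning the resulting power of $p$ with the labelling of $\Pon(\widehat{\Gm})$. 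The counting-measure normalisation furnishes an independent check: since $\Ballm(k)$ carries volume $\vol(\Ballm(k))=p^{k}$ with mass $p^{-m}$ per point, it has $\#\Ballm(k)=p^{k+m}$ elements, so the index $[\widehat{\Gm}\colon(\Ballm(k))^\perp]$ must equal $p^{k+m}$, which pins down the annihilator unambiguously.
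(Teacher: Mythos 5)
Your strategy---pull everything back to representatives in $\Qp$, apply \eqref{Eq:dim1:AnnBallisBall} (equivalently Lemma~\ref{lem:LCAG:Int}), and settle the second identity by biduality or by counting---is the natural one, and indeed the paper offers no proof of this proposition at all, so there is nothing else to compare it with. The problem is that your computation, carried honestly to the end, does not yield the displayed statement: it contradicts it whenever $m\geq 1$. Your two-layer identity gives
\[
\big(\Ballm(k)\big)^\perp \;=\; \Ball(k)^\perp\cap p^{-m}\Zp \;=\; \Ball(-k)\cap p^{-m}\Zp \;=\; p^{k}\Zp ,
\]
since $k\geq 0$ forces $p^k\Zp\subseteq p^{-m}\Zp$. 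The proposition asserts the annihilator is $p^{k-m}\Zp$, and $p^k\Zp\neq p^{k-m}\Zp$ once $m\geq 1$. The step you defer as ``index bookkeeping---aligning the resulting power of $p$ with the labelling'' is exactly where this discrepancy lives, and no bookkeeping can close it. Worse, your own counting check, had you executed it rather than only announced it, refutes the printed claim instead of confirming it: $\#\Ballm(k)=p^{k+m}$ forces $[\widehat{\Gm}\colon(\Ballm(k))^\perp]=p^{k+m}$, and the subgroup of $p^{-m}\Zp$ with that index is $p^{k}\Zp$, whereas the claimed $p^{k-m}\Zp$ has index only $p^{k}$. So the check does pin down the annihilator unambiguously---to $p^k\Zp$, not to the answer in the statement.

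The diagnosis is that the proposition as printed carries an indexing slip of $p^{-m}$ on one side. The statement your (correct) argument proves, and the one actually needed for the surrounding claim that $\Pon(\Gm)=\{\Ballm(i)\colon i\in -m+\No\}$ and $\Pon(\widehat{\Gm})=\{p^{-m+i}\Zp\colon i\in\No\}$ are annihilator compatible, is
\[
\big(\Ballm(i)\big)^\perp = p^{i}\Zp \quad\text{and}\quad \big(p^{i}\Zp\big)^\perp=\Ballm(i) \qquad \text{for } i\in -m+\No,
\]
equivalently $\big(\Ballm(k-m)\big)^\perp=p^{k-m}\Zp$ for $k\in\No$. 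You can cross-check this against the maps $Q_m$ and $\widehat Q_m$: they intertwine the pairings \eqref{eq:pairing-1d} at levels $0$ and $m$, $Q_m$ carries $\Ballo(k)$ to $\Ballm(k-m)$, and $\widehat Q_m$ carries $\big(\Ballo(k)\big)^\perp=p^{k}\Zp$ to $p^{k-m}\Zp$, in agreement with the corrected indexing. So your reduction to the ambient field, the use of \eqref{Eq:dim1:AnnBallisBall}, and the biduality argument are all sound; but a finished proof cannot ``match'' $p^k\Zp$ to the statement's $p^{k-m}\Zp$---it must either re-index the statement as above or explicitly flag the conflict.
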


Define
\[
Q_m\colon \G\to \Gm \quad \text{by} \quad Q_m\big(x+\Zp\big)=p^m x+p^m\Zp,
\]
and
\[
\widehat Q_m\colon \Zp\to p^{-m}\Zp  \quad \text{by} \quad  \widehat{Q}_m(x) = p^{-m}x\]

\begin{proposition}
The product $Q_m\times \widehat Q_m$ takes the Pontryagin filtration $\big(\!\Pon(\G),\Pon(\Zp)\big)$ to $\big(\!\Pon(\Gm),\Pon(p^{-m}\Zp)\big)$.
\end{proposition}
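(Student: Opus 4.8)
The plan is to unpack the phrase ``takes the Pontryagin filtration to the Pontryagin filtration'' into three assertions and verify each: that $Q_m$ carries $\Pon(\G)$ bijectively onto $\Pon(\Gm)$, that $\widehat Q_m$ carries $\Pon(\Zp)$ bijectively onto $\Pon(p^{-m}\Zp)$, and that the two maps jointly intertwine the annihilator operation. I would open by recording that both maps are topological group isomorphisms: $Q_m$ is multiplication by $p^m$ on $\Qp$ passed to the quotient, an automorphism of $\Qp$ carrying $\Zp$ to $p^m\Zp$ that therefore descends to an isomorphism $\Qp\slash\Zp\to\Qp\slash p^m\Zp$, while $\widehat Q_m$ is multiplication by $p^{-m}$, an isomorphism $\Zp\to p^{-m}\Zp$. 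As isomorphisms they send compact open subgroups to compact open subgroups and preserve indices, so they automatically take regular filtrations to regular filtrations; only the identification of images and the compatibility of annihilators require work.

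First I would compute the images directly. The ball $\Ballo(i)$ has representative set $p^{-i}\Zp$, so $Q_m(\Ballo(i))=p^{m-i}\Zp\slash p^m\Zp$, which is exactly $\Ballm(i-m)$; as $i$ runs over $\No$ the index $i-m$ runs over $-m+\No$, whence $Q_m$ is a bijection $\Pon(\G)\to\Pon(\Gm)$. In the same way $\widehat Q_m(p^i\Zp)=p^{i-m}\Zp$, and as $i$ runs over $\No$ these exhaust $\{p^{-m+i}\Zp\colon i\in\No\}=\Pon(p^{-m}\Zp)$. Both computations are routine scalings.

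The substantive step is annihilator compatibility, and the lever is the pairing identity
\[
\langle Q_m([x]_0),\,\widehat Q_m(y)\rangle_m=\chi\big((p^m x)(p^{-m}y)\big)=\chi(xy)=\langle [x]_0,\,y\rangle_0,
\]
valid for every $[x]_0$ in $\G$ and $y$ in $\Zp$. Granting this one-line character computation, I would show that for any $U$ in $\Pon(\G)$ and any $\eta$ in $\widehat\G$, the element $\widehat Q_m(\eta)$ lies in $(Q_m U)^\perp$ if and only if $\langle Q_m u,\widehat Q_m\eta\rangle_m=1$ for all $u$ in $U$, which by the identity is equivalent to $\langle u,\eta\rangle_0=1$ for all $u$, i.e.\ to $\eta\in U^\perp$. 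Since $\widehat Q_m$ is a bijection, this yields $\widehat Q_m(U^\perp)=(Q_m U)^\perp$. Combined with the image computations and Proposition~\ref{Prop:DualityScaling:DiscreteA}, which exhibits each pair as a Pontryagin filtration, this shows that $Q_m\times\widehat Q_m$ carries the annihilator-compatible pair $(\Pon(\G),\Pon(\Zp))$ to the annihilator-compatible pair $(\Pon(\Gm),\Pon(p^{-m}\Zp))$.

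I expect the only real subtlety to be bookkeeping of the index shift by $m$: one must confirm that the shift $Q_m$ introduces on the space side, $\Ballo(i)\mapsto\Ballm(i-m)$, is matched by the shift $\widehat Q_m$ introduces on the dual side, so that the annihilator of an image equals the image of the annihilator. Mapping each filtration onto the other is not by itself enough; the two maps must be coordinated through the pairing, and the displayed pairing identity is precisely what forces the two shifts to agree. Once it is secured, the remainder is mechanical.
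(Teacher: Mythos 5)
Your proof is correct, and it supplies essentially the verification the paper intends: the paper states this proposition without proof, treating it as immediate from the definitions of $Q_m$ and $\widehat Q_m$, the explicit description of the filtrations, and Proposition~\ref{Prop:DualityScaling:DiscreteA}. Your image computations $Q_m(\Ballo(i))=\Ballm(i-m)$ and $\widehat Q_m(p^{i}\Zp)=p^{i-m}\Zp$, together with the pairing identity $\langle Q_m([x]_0),\widehat Q_m(y)\rangle_m=\langle [x]_0,y\rangle_0$ and the consequent relation $\widehat Q_m(U^\perp)=(Q_m U)^\perp$, are exactly the omitted details, correctly executed.
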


The Fourier transform $\mathcal F_m$ takes $L^2(\Gm)$ to $L^2(p^{-m}\Zp)$ and the inverse Fourier transform $\mathcal F^{-1}_m$ takes $L^2(p^{-m}\Zp)$ to $L^2(\Gm)$.  They are given for any $f$ in $L^2(\Gm)$ and any $g$ in $L^2(p^{-m}\mathds Z_p)$ by \[(\mathcal F_mf)(y) = \int_{\Gm}\langle -[x]_m,y\rangle_m f([x]_m)\,{\rm d}[x]_m \quad \text{and} \quad (\mathcal F_m^{-1}g)\left([x]_m\right) = \int_{p^{-m}\mathds Z_p}\langle [x]_m,y\rangle_m g(y)\,{\rm d}y.\]

Balls in $\Qp$ of radius at least $p^{-m}$ decompose as disjoint unions of cosets of $p^m\Zp$, and the map \[x\mapsto [x]_m\] is constant on each such coset. These facts imply Lemma~\ref{StateSpaces:Intmoverballs}.

\begin{lemma}\label{StateSpaces:Intmoverballs}
For any integrable function $f$ from $\Gm$ to $\mathds{C}$ and any ball $B$ in $\Qp$ of radius at least $p^{-m}$,
\[
\int_{[B]_m} f([x]_m)\,\d[x]_m\;=\;\int_{B} f([x]_m)\,\d x.
\]
\end{lemma}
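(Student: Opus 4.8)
The plan is to reduce both integrals to the same finite sum by exploiting the two facts recalled immediately before the statement: that a ball $B$ of radius at least $p^{-m}$ decomposes as a disjoint union of cosets of $p^m\Zp$, and that $x\mapsto[x]_m$ is constant on each such coset. Writing $\rad(B)=p^{k}$ with $k\ge -m$, I would first record the coset decomposition
\[
B=\bigsqcup_{j=1}^{N}\,(c_j+p^m\Zp),
\]
where, since each coset has Haar volume $p^{-m}$ and $\Vol(B)=p^{k}$ by translation invariance, the number of cosets is $N=p^{k+m}\ge 1$. Because the cosets are pairwise disjoint, the classes $[c_1]_m,\dots,[c_N]_m$ are pairwise distinct, and they are exactly the points of the finite set $[B]_m\subseteq\Gm$.

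Next I would compute the right-hand side. On the coset $c_j+p^m\Zp$ the integrand $x\mapsto f([x]_m)$ is constant with value $f([c_j]_m)$, and \eqref{eq:Ball1DMeasure} gives $\mu(c_j+p^m\Zp)=p^{-m}$, so
\[
\int_B f([x]_m)\,\d x=\sum_{j=1}^{N}f([c_j]_m)\,\mu(c_j+p^m\Zp)=p^{-m}\sum_{j=1}^{N}f([c_j]_m).
\]
For the left-hand side I would integrate against the counting measure $\mu_m$, which assigns mass $p^{-m}$ to each singleton of $\Gm$; since $[B]_m$ consists of the $N$ distinct points $[c_j]_m$, this gives
\[
\int_{[B]_m}f([x]_m)\,\d[x]_m=p^{-m}\sum_{j=1}^{N}f([c_j]_m),
\]
which coincides with the right-hand side and establishes the identity.

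The argument is entirely elementary, and there is no deep obstacle; the only points demanding care are combinatorial. One must verify that the quotient map carries the $N$ disjoint cosets to $N$ \emph{distinct} classes, so that the counting measure does not double-count any point of $[B]_m$, and that the normalization $p^{-m}$ of $\mu_m$ on singletons matches exactly the Haar volume $p^{-m}$ of each coset recorded in \eqref{eq:Ball1DMeasure}. Integrability of $f$ plays no real role here, since the restriction to the finite set $[B]_m$ is automatically summable; it is needed only to make the two sides well defined as written.
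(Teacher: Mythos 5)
Your proof is correct and follows exactly the route the paper intends: the paper offers no separate proof, stating only that the coset decomposition of $B$ into cosets of $p^m\Zp$ and the constancy of $x\mapsto[x]_m$ on each coset "imply" the lemma, and your write-up is precisely the careful expansion of that observation, with both sides reduced to $p^{-m}\sum_{j}f([c_j]_m)$.
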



\subsection{The discrete groups with two components}


For any $m$ to be in $\No$ denote by $(\Gm)^2$ the group \[(\Gm)^2 = (\Qp\slash p^m\Zp)\times (\Qp\slash p^m\Zp),\] endowed with component-wise addition.  Given any \[\ve{x} = (x_1,x_2)\in\Qp^2 \quad \text{and}  \quad \ve{y}=(y_1,y_2)\in(p^{-m}\Zp)^2,\] define the dual pairing by
\begin{equation}\label{eq:pairing-2d}
\langle \x_m,\ve{y}\rangle_m = \chi\big(x_1 y_1+x_2 y_2\big),
\end{equation}
which identifies $\widehat{(\Gm)^2}$ with $(p^{-m}\Zp)^2$ equipped with the Haar measure induced by inclusion in $\Qp^2$. 

For $\varepsilon$ in $\{0,1\}$ and $k$ in $\No$, set
\[
\BBtm(2k+\varepsilon) = \big(p^{-k}\Zp\slash p^m\Zp\big)\times\big(p^{-k-\varepsilon}\Zp\slash p^m\Zp\big).
\]
Define coarse and fine shells, respectively, by
\[
\SStmc(2k):=\BBtm(2k)\smallsetminus \BBtm(2k-2) \quad \text{and} \quad \SStmf(2k):=\BBtm(2k)\smallsetminus \BBtm(2k-1),
\]
and for odd indices,
\[
\SStmf(2k+1):=\BBtm(2k+1)\smallsetminus \BBtm(2k).
\]
Finally, \[\BBtm(-2m)=\SStmc(-2m)=\SStmf(-2m)=\{[0]_m\}.\] Volumes of these balls and shells are given by %
\begin{equation}\label{eq:BallmVol-2d}
\begin{cases}
\Vol\!\big(\BBtm(2k+i)\big)=p^{2k+i}\\
\Vol\!\big(\SStmc(2k)\big)=(1-p^{-2})\,\Vol\!\big(\BBtm(2k)\big)\\
\Vol\!\big(\SStmf(2k+i)\big)=(1-p^{-1})\,\Vol\!\big(\BBtm(2k+i)\big),
\end{cases}
\end{equation}
for $i$ in $\{0,1\}$ and $k$ in $-m+\mathbb N_0$.

Two distinct Pontryagin filtrations for $\big((\Gm)^2, (p^{-m}\Zp)^2\big)$ are of primary significance, the coarse filtration $\big(\!\Pon_{\rm c}((\Gm)^2), \Pon_{\rm c}(p^{-m}\Zp)^2\big)$ with \[\Pon_{\rm c}((\Gm)^2) = \{\BBtm(2k)\colon k\in \No\} \quad \text{and} \quad \Pon_{\rm c}(p^{-m}\Zp)^2 = \{p^{-m+2k}\Zp^2\colon k\in \No\},\] and the fine filtration $\big(\!\Pon_{\rm f}((\Gm)^2), \Pon_{\rm f}(p^{-m}\Zp)^2\big)$ with \[\Pon_{\rm f}((\Gm)^2) = \Big\{\BBtm(2k+\varepsilon)\colon k\in \No, \varepsilon\in \{0,1\}\Big\}\] and \[\Pon_{\rm f}\big((p^{-m}\Zp)^2\big) = \Big\{p^{-m+k}\Zp^2\colon k\in \No\Big\} \cup \Big\{p^{-m+k}\Zp\times p^{-m+k-1}\Zp\colon k\in \NN\Big\}.\] %
Proposition~\ref{Prop:DualityScaling:DiscreteB} implies that the given coarse and fine filtrations are, indeed, Pontryagin filtrations for $\big((\Gm)^2, p^m\Zp^2\big)$.

\begin{proposition}\label{Prop:DualityScaling:DiscreteB} 
Take $\perp$ to be the annihilator operation that the pairing \eqref{eq:pairing-2d} defines. For any $k$ in $\No$, \[\Big(\BBtm(2k)\Big)^\perp = \Ballt(2(m-k)) = p^{-m+k}\Zp\times p^{-m+k}\Zp\] and\[\big(\BBtm(2k+1)\big)^\perp = \Ballt(2(m-k)-1) = p^{-m+k+1}\Zp \times p^{-m+k}\Zp.\] 
\end{proposition}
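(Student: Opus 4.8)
The plan is to reduce the two-dimensional computation to the one-dimensional Proposition~\ref{Prop:DualityScaling:DiscreteA} by exploiting that each ball $\BBtm(2k+\varepsilon)$ is a Cartesian product of one-dimensional balls and that the pairing factors coordinatewise. First I would record the product decomposition
\[
\BBtm(2k+\varepsilon) = \Ballm(k)\times\Ballm(k+\varepsilon),
\]
which is immediate from the definition together with $p^{-k-\varepsilon}\Zp\slash p^m\Zp = \Ballm(k+\varepsilon)$, and note that $\varepsilon\in\{0,1\}$, so the two factors differ by at most one index. Local constancy of $\chi$ guarantees, exactly as for \eqref{eq:pairing-1d}, that the pairing is well defined on cosets.

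Next I would establish that the annihilator of such a product is a product of one-dimensional annihilators. For $\ve{y}=(y_1,y_2)$ in $(p^{-m}\Zp)^2$, the defining character splits as a product of two one-dimensional characters; setting one primal coordinate to $[0]_m$ and letting the other range over its factor isolates a single one-dimensional annihilator condition, and because the two characters multiply, the two conditions so obtained are jointly necessary and sufficient. Applying Proposition~\ref{Prop:DualityScaling:DiscreteA} to the factors then yields the one-dimensional annihilators $(\Ballm(k))^\perp=p^{k-m}\Zp$ and $(\Ballm(k+\varepsilon))^\perp=p^{k+\varepsilon-m}\Zp$.

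It remains to assemble these into the claimed balls and match indices. In the even case $\varepsilon=0$ the two factors coincide, so the annihilator is the symmetric product $p^{k-m}\Zp\times p^{k-m}\Zp$, which is $\Ballt(2(m-k))$ under the convention $\Ballt(2\ell)=p^{-\ell}\Zp\times p^{-\ell}\Zp$ with $\ell=m-k$. In the odd case $\varepsilon=1$ the two one-dimensional annihilators are $p^{k-m}\Zp$ and $p^{k+1-m}\Zp$; the cross-term structure of the dual pairing (inherited from the trace form, cf.\ \eqref{eq:Qp2dualPairing:TotRam}) pairs the first primal coordinate against the second dual coordinate, so these annihilators are assembled in the order $p^{k+1-m}\Zp\times p^{k-m}\Zp$, which is $\Ballt(2(m-k)-1)$.

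The main obstacle is exactly this coordinate bookkeeping in the odd case. The even ball is symmetric and leaves no ambiguity, but the asymmetric ball $\BBtm(2k+1)$ forces one to track which primal coordinate is tested against which dual coordinate; the cross-term structure interchanges them, and it is this interchange—rather than any analytic input—that produces the stated coordinate order and the single index $2(m-k)-\varepsilon$ on the continuum side. Everything else, namely the product decomposition, well-definedness, the reduction to one dimension, and the even case, is routine given Proposition~\ref{Prop:DualityScaling:DiscreteA} and the local constancy of $\chi$.
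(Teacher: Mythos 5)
The paper states Proposition~\ref{Prop:DualityScaling:DiscreteB} without proof, so there is no argument of the authors to compare against; your strategy---decompose $\BBtm(2k+\varepsilon)=\Ballm(k)\times\Ballm(k+\varepsilon)$, split the character into two one-dimensional characters, isolate each factor's condition by setting the other primal coordinate to $[0]_m$, and quote Proposition~\ref{Prop:DualityScaling:DiscreteA}---is surely the intended one, and your even case and the final identification with $\Ballt(2(m-k)-\varepsilon)$ are handled correctly. You also correctly identified that the entire content of the odd case is coordinate bookkeeping.

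The difficulty is that your resolution of that bookkeeping rests on a false assertion about the pairing you are required to use. The proposition explicitly takes $\perp$ to be the annihilator defined by \eqref{eq:pairing-2d}, and \eqref{eq:pairing-2d} as displayed is \emph{diagonal}: $\langle[\ve{x}]_m,\ve{y}\rangle_m=\chi(x_1y_1+x_2y_2)$. It has no cross-term structure; that structure belongs to the continuum trace pairings \eqref{eq:Qp2dualPairing:UnRam} and \eqref{eq:Qp2dualPairing:TotRam}, which you cite but which are not the pairing named in the statement. Under \eqref{eq:pairing-2d}, your own coordinatewise argument ties the first dual coordinate to the first primal factor, so granting Proposition~\ref{Prop:DualityScaling:DiscreteA} it yields
\[
\big(\BBtm(2k+1)\big)^\perp=\big(\Ballm(k)\big)^\perp\times\big(\Ballm(k+1)\big)^\perp=p^{k-m}\Zp\times p^{k+1-m}\Zp,
\]
which is the \emph{transpose} of the claimed $p^{-m+k+1}\Zp\times p^{-m+k}\Zp$. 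The claimed formula (and likewise the annihilator-compatibility of the fine filtrations $\Pon_{\rm f}((\Gm)^2)$ and $\Pon_{\rm f}\big((p^{-m}\Zp)^2\big)$ asserted just before the proposition) is correct only if the discrete pairing is read as the cross pairing $\chi(x_1y_2+x_2y_1)$, consistent with the continuum sections. In other words, you have run into a genuine inconsistency in the paper between \eqref{eq:pairing-2d} and the proposition, and your proof papers over it by attributing cross-term structure to a formula that does not have it. A sound write-up must make the choice explicit: either prove the transposed identity under the literal \eqref{eq:pairing-2d} and record that the proposition as stated needs its two dual factors interchanged, or state that \eqref{eq:pairing-2d} must be amended to the cross pairing and then run your argument verbatim. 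As written, the key odd-case step would fail for the pairing it invokes.
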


Define
\[
Q_m^{(2)}\colon \G^2\to (\Gm)^2 \quad \text{by} \quad Q_m^{(2)}\big(\mathbf{x}+\Zp^2\big) = p^m\mathbf{x}+p^m\Zp^2
\]
and
\[
\widehat Q_m^{(2)}\colon \Zp^2\to \big(p^{-m}\Zp\big)^2 \quad \text{by} \quad \widehat{Q}_m^{(2)}(x_1,x_2) = \big(p^{-m}x_1,p^{-m}x_2\big).
\]

\begin{proposition}
The product $Q_m^{(2)}\times \widehat Q_m^{(2)}$ takes the coarse and fine Pontryagin filtrations of $\big(\G^2,\Zp^2\big)$ to those of $\big((\Gm)^2\times(p^{-m}\Zp)^2\big)$.
\end{proposition}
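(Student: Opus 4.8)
The plan is to realize both $Q_m^{(2)}$ and $\widehat{Q}_m^{(2)}$ as avatars of the single dilation $\ve{x}\mapsto p^m\ve{x}$ of $\Qp^2$, to compute directly the images of the generating subgroups of each filtration, and then to verify that the two maps intertwine the dual pairings so that annihilator compatibility is carried along for free. This follows the same template as the single-component proposition preceding it.

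First I would observe that $Q_m^{(2)}$ is the descent of the topological automorphism $\phi\colon\ve{x}\mapsto p^m\ve{x}$ of $\Qp^2$. Since $\phi(\Zp^2)=p^m\Zp^2$ and $(\Gm)^2\cong\Qp^2\slash p^m\Zp^2$, the map $\phi$ induces an isomorphism $\G^2=\Qp^2\slash\Zp^2\to\Qp^2\slash p^m\Zp^2=(\Gm)^2$ in $\LCAG$, and this induced map is exactly $Q_m^{(2)}$. Dually, $\widehat{Q}_m^{(2)}$ is the restriction to $\Zp^2$ of the dilation $\ve{y}\mapsto p^{-m}\ve{y}$, a topological isomorphism onto $(p^{-m}\Zp)^2$.

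Next I would track the generating balls through these dilations. Because $\phi$ sends the box $p^{-a}\Zp\times p^{-b}\Zp$ to $p^{m-a}\Zp\times p^{m-b}\Zp$, the image $Q_m^{(2)}(\BBo(2k+\varepsilon))$ equals $\BBtm(2(k-m)+\varepsilon)$; hence, as $(k,\varepsilon)$ sweeps the index set of the coarse ($\varepsilon=0$) and fine ($\varepsilon\in\{0,1\}$) filtrations of $\G^2$, the shift $k\mapsto k-m$ sweeps that of $(\Gm)^2$ bijectively, reaching down to the minimal subgroup $\BBtm(-2m)=\{[0]_m\}$. In the same way $\widehat{Q}_m^{(2)}$ multiplies the dual boxes by $p^{-m}$, sending $p^{k}\Zp^2$ to $p^{-m+k}\Zp^2$ and $p^{k}\Zp\times p^{k-1}\Zp$ to $p^{-m+k}\Zp\times p^{-m+k-1}\Zp$, which carries the coarse and fine dual filtrations of $\Zp^2$ onto those of $(p^{-m}\Zp)^2$. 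Since a filtration is nothing more than its family of subgroups, these two bijections already show that each filtration in the source Pontryagin pair lands on the corresponding target filtration.

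Finally, to promote this to a statement about the Pontryagin \emph{pair}, I would verify the intertwining $\langle Q_m^{(2)}[\ve{x}]_0,\widehat{Q}_m^{(2)}\ve{y}\rangle_m=\langle[\ve{x}]_0,\ve{y}\rangle_0$ for $\ve{x}\in\Qp^2$ and $\ve{y}\in\Zp^2$, which is immediate from $\chi\big((p^m x_i)(p^{-m}y_j)\big)=\chi(x_iy_j)$. This identity forces $\widehat{Q}_m^{(2)}(U^\perp)=\big(Q_m^{(2)}U\big)^\perp$ for every subgroup $U$ of $\G^2$, so the annihilator operation for $\big((\Gm)^2,(p^{-m}\Zp)^2\big)$ pulls back along $Q_m^{(2)}\times\widehat Q_m^{(2)}$ to that for $\big(\G^2,\Zp^2\big)$; the annihilator compatibility already recorded for the source (the $m=0$ instance of Proposition~\ref{Prop:DualityScaling:DiscreteB}) therefore transports to the image. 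I expect no analytic obstacle; the only step demanding care is the index bookkeeping on the discrete side, where one must confirm that the shift $k\mapsto k-m$ exhausts the target families all the way down to $\{[0]_m\}$ rather than stopping at $\BBtm(0)$.
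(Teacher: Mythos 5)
Your proof is correct and is exactly the direct verification that the paper leaves implicit: the paper states this proposition without proof, treating it as an immediate consequence of the definitions of $Q_m^{(2)}$ and $\widehat Q_m^{(2)}$ together with Proposition~\ref{Prop:DualityScaling:DiscreteB}, and your computation of the images of the generating balls plus the pairing identity $\chi\big((p^m x_i)(p^{-m}y_j)\big)=\chi(x_i y_j)$ supplies precisely that argument. Your closing caution about the index bookkeeping is well placed: for the statement to hold, the target filtrations must be indexed so as to reach down to $\BBtm(-2m)=\{[0]_m\}$, i.e.\ by $k\in -m+\No$ as in the one-component case and the paper's volume formula, rather than by $k\in\No$ as the displayed definition of $\Pon_{\rm c}\big((\Gm)^2\big)$ literally reads.
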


Balls in $\Qp^2$ of volume at least $p^{-2m}$ decompose as disjoint unions of cosets of balls of radius $p^{-2m}$, and the map \[\ve{x}\mapsto [\ve{x}]_m\] is constant on each such coset. These facts imply Lemma~\ref{StateSpaces:Intmoverballs2}, the analog of Lemma~\ref{StateSpaces:Intmoverballs} in the two-dimensional setting.

\begin{lemma}\label{StateSpaces:Intmoverballs2}
For any integrable function $f$ from $(\Gm)^2$ to $\mathds{C}$ and any ball $B$ in $\Qp^2$ of volume at least $p^{-m}$,
\[
\int_{[B]_m} f([\ve{x}]_m)\,\d[\ve{x}]_m=\int_{B} f([\ve{x}]_m)\,\d\ve{x}.
\]
\end{lemma}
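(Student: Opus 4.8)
The plan is to reduce the identity to the elementary observation that the quotient map $\ve{x}\mapsto[\ve{x}]_m$ pushes the restriction of Haar measure on $B$ forward to the counting measure on $[B]_m$. First I would identify the kernel of the quotient $\Qp^2\to(\Gm)^2$ as the subgroup $(p^m\Zp)^2=\Ballt(-2m)$, which has $\Vol\big((p^m\Zp)^2\big)=p^{-2m}$ and is precisely the smallest centered ball appearing in either the coarse or fine filtration. The map $\ve{x}\mapsto[\ve{x}]_m$ is constant exactly on cosets of this subgroup, taking the common value $[\ve{a}]_m$ on the coset through any representative $\ve{a}$.

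Next I would exhibit the coset decomposition of $B$. Any ball $B$ is a coset $\ve{c}+B_0$ of a centered ball $B_0=\Ballt(j)$ with $\Vol(B_0)=\Vol(B)$. Since the centered balls form a chain under inclusion and $\Ballt(-2m)$ is the smallest member, of volume $p^{-2m}$, the hypothesis $\Vol(B)\ge p^{-m}\ge p^{-2m}$ forces $\Ballt(-2m)\subseteq B_0$. Hence $B$ is a finite disjoint union $B=\bigsqcup_j\big(\ve{a}_j+(p^m\Zp)^2\big)$ of cosets of $(p^m\Zp)^2$, the number of which equals $p^{2m}\Vol(B)$. Distinct cosets differ by an element outside $(p^m\Zp)^2$ and therefore map to distinct points of $(\Gm)^2$, so their images are exactly the points of $[B]_m$.

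With this decomposition in hand, both integrals collapse to the same finite sum. On the coset $\ve{a}_j+(p^m\Zp)^2$ the integrand $f([\ve{x}]_m)$ is constant equal to $f([\ve{a}_j]_m)$, so the right-hand side becomes $\sum_j f([\ve{a}_j]_m)\,\Vol\big((p^m\Zp)^2\big)=p^{-2m}\sum_j f([\ve{a}_j]_m)$. Because the counting measure on $(\Gm)^2$ assigns mass $p^{-2m}$ to each singleton, consistent with $\Vol(\BBtm(-2m))=p^{-2m}$, the left-hand side equals $\sum_{[\ve{a}]_m\in[B]_m} f([\ve{a}]_m)\,p^{-2m}$, which is the identical sum. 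Equating the two yields the claim.

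The argument is essentially bookkeeping, and no step is a genuine obstacle. The only point requiring care is the normalization match: one must verify that the Haar volume $p^{-2m}$ of a single coset of $(p^m\Zp)^2$ coincides with the mass the counting measure on $(\Gm)^2$ assigns to a single point, so that the factor $p^{-2m}$ appears identically on both sides and no stray power of $p$ survives. This mirrors the one-dimensional Lemma~\ref{StateSpaces:Intmoverballs}, with the threshold volume $p^{-m}$ replaced throughout by $p^{-2m}$.
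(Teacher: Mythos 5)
Your proof is correct and follows essentially the same route as the paper, which justifies the lemma by exactly the two facts you elaborate: a ball of sufficient volume decomposes as a disjoint union of cosets of the kernel $(p^m\Zp)^2$ of the quotient map, and $\ve{x}\mapsto[\ve{x}]_m$ is constant on each such coset, with distinct cosets mapping to distinct points. Your additional verification that the Haar volume $p^{-2m}$ of each coset matches the mass the counting measure on $(\Gm)^2$ assigns to a singleton is precisely the bookkeeping the paper leaves implicit.
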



\subsection{Path measures}


Take $I$ to be either $[0,\infty)$ or $\No$. For any Polish space $\mathcal S$, denote by $F(I\colon\mathcal S)$ the set of all $\mathcal S$-valued paths with domain $I$, and by $D([0,\infty)\colon\mathcal S)$ the Skorokhod space of $\mathcal S$-valued c\`adl\`ag paths equipped with the $J_1$ topology~\cite{bil1}. Write $\Omega(I)$ to indicate either $F(I\colon\mathcal S)$ or $D([0,\infty)\colon\mathcal S)$ and denote by $\mathcal B(\mathcal S)$ the Borel $\sigma$-algebra on $\mathcal S$.

An \emph{epoch} of length $N$ in $I$ is a strictly increasing sequence
\[
e\colon \{0,\dots,N\}\to I \quad \text{with} \quad e(0)=0.
\]
A \emph{route} of length $N$ is a sequence
\[
U\colon\{0,\dots,N\}\to \mathcal B(\mathcal S).
\]
A \emph{history} is a sequence \[h=\big((t_i,U_i)\big)_{i=0}^N\] with $(t_i)$ an epoch and $(U_i)$ a route. For any history $h$, the associated \emph{simple cylinder set} in $\Omega(I)$ is
\[
\C(h)=\big\{\omega\in\Omega(I)\colon \omega(t_i)\in U_i,\;\forall i\in\{0,\dots,N\}\big\}.
\]

Specification of a simple cylinder set always involves an identification of an ambient path space. The $\sigma$-algebra generated by the simple cylinder sets is the \emph{cylinder $\sigma$-algebra}. Define $Y$ on $I\times \Omega(I)$ by \eqref{Intro:Yt} and $Y_t$ on $\Omega(I)$ by \eqref{Intro:Ydef}. For any probability measure $\Pb$ on the cylinder $\sigma$-algebra, $(\Omega(I),\Pb,Y_t)$ is a random variable and $(\Omega(I),\Pb,Y)$ is a stochastic process. Finite-dimensional distributions are the probabilities of simple cylinder sets. The convergence of processes that follows concerns processes with fixed state and parameter spaces but varying probability measures.

Assume now that $\mathcal S$ is an object of $\LCAG$ endowed with a Haar measure $\mu$. If $\{\rho(t,\cdot)\}_{t>0}$ is a convolution semigroup of probability densities on $\mathcal S$ with respect to $\mu$, then the standard construction assigns probabilities to simple cylinder sets by
\begin{align}\label{eq:ProbDetermination}
&\Pb\big(Y_{t_0}\in U_0,\dots,Y_{t_N}\in U_N\big)\\
& \hspace{.25in} = \begin{cases}
\displaystyle \int_{U_1}\!\cdots\!\int_{U_N}\rho\big(t_1, x_1\big)\prod_{i=2}^{N}\rho\big(t_i-t_{i-1},\,x_i-x_{i-1}\big)\,\d x_1\cdots\d x_N, & \text{if }0\in U_0,\\[8pt]
0, & \text{otherwise},
\end{cases}\notag
\end{align}
where $0$ is the group identity. The Kolmogorov extension theorem yields a probability measure $\Pb$ on $D([0,\infty)\colon\mathcal S)$ concentrated on paths started at the identity. When $\rho(t,\cdot)$ is the heat kernel associated with the Vladimirov operator on $\Qp$ (see \eqref{subsec:LimitProcess1D}), the triple $(D([0,\infty)\colon\Qp),\Pb,Y)$ is a $p$-adic Brownian motion.

The paper specifies random variables and processes by prescribing their finite-dimensional distributions, without constructing an underlying probability space at each step. In all cases that arise here, the Kolmogorov extension theorem applies and yields a concrete model (and, when relevant, a c\`adl\`ag version on the Skorokhod space). A reader who prefers explicit constructions can first verify the Kolmogorov consistency conditions and then invoke the theorem.


\subsection{Approximation of path measures}


Henceforth take $m$ to vary in $\No$ and $G$ to be a discrete, countably infinite, additive abelian group. Take $X$ to be a $G$-valued random variable, and $(X_i)_{i\ge1}$ to be a sequence of independent random variables, each with the same distribution as $X$. Define the partial sums
\begin{equation}\label{eq:primitive-sum}
S_n = X_0+ X_1+\cdots+ X_n,
\end{equation}
where $X_0$ is almost surely equal to the additive group identity.  The law for the random variable $S_n$ may be computed directly from the formula for $S_n$, as the $n$-fold convolution of the law for $X$.  For each element $g$ of $G$, denote by $\Prob(S_n = g)$ the probability that $S_n$ is equal to $g$.

For any history $h$ given by \[h = ((0,U_0),(n_1,U_1),\dots,(n_N,U_N)),\] define finite dimensional distributions by
\begin{align}\label{eq:PremeasureFromAbstract}
\PbG(\C(h)) =\sum_{x_0\in U_0}\cdots\sum_{x_N\in U_N}\Prob(S_{0}=x_0)\prod_{i=1}^{N}\Prob\big(S_{n_i-n_{i-1}}=x_i-x_{i-1}\big).
\end{align}
For each $n$, take \[S\colon \No\times F(\mathbb N_0\colon G)\to G \quad \text{by} \quad S(n,\omega)=\omega(n),\] so that $S(n,\cdot)$ is the random variable $S_n$.  The Kolmogorov extension theorem yields a measure $\PbG$ on the $\sigma$--algebra of cylinder sets of $F(\mathbb N_0\colon G)$.  The triple $(F(\mathbb N_0\colon G), \PbG, S)$ is a \emph{primitive random walk} on $G$ with \emph{generator} $X$.

\begin{definition}
Take $\mathcal S$ to be an object in $\LCAG$ that is additionally a complete normed vector space with norm $\|\cdot\|$. For any positive null sequence $(\delta_m)$, a sequence $(\Gamma_m)$ of injective functions from $G$ to $\mathcal S$ that satisfies these properties is a sequence of \emph{spatial embeddings}: 
\begin{enumerate}
\item[(1)] for any $x$ in $\mathcal S$ there is a $g$ in $G$ so that 
\[
0<\|\Gamma_m(g)-x\|\le \delta_m;
\]
\item[(2)] for any $g_1$ and $g_2$ in $G$, 
\[
\|\Gamma_m(g_1)-\Gamma_m(g_2)\|<\delta_m\quad \text{implies that}\quad  g_1=g_2.
\]
\end{enumerate}
\end{definition}

\begin{definition}
For any positive null sequence $(\tau_m)$, a sequence $(\iota_m)$ of \emph{spatiotemporal embeddings} of $\mathbb N_0\times G$ into $[0,\infty)\times\mathcal S$ with time scale $\tau$ and spatial embeddings $\Gamma$ is given by
\[
\iota_m\colon \mathbb N_0\times G\to[0,\infty)\times\mathcal S\qquad \text{by} \quad \iota_m(n,g)=\big(n\tau_m, \Gamma_m(g)\big).
\]
\end{definition}

Following earlier work \cite{W:Expo:24}, for any $\iota_m$, any  history $h$ for paths in $F(\mathbb N_0\colon G)$, and any $i$ in $\mathds N_0\cap [0, \ell(h)]$, initially define a history $h^m$ for paths in $F(\mathds N_0\colon G)$ by \[h^m(i) = \left(\floor*{\tfrac{e_h(i)}{\tau_m}}, \Gamma_m^{-1}(U_h(i))\right).\]  The sequence of time points for $h^m$ may fail to be an epoch since distinct time points that are sufficiently close may map to the same time point.  In such cases, remove all repeated instances of a time point except one, along with their corresponding places in the route. Replace the remaining value of the route, which corresponds to the remaining instance of the repeated time point, with the intersection of all values of the route at the repeated time point.  

The functions $\iota_m$ and the process $(F(\mathbb N_0\colon G), \PbG, S)$ together assign probabilities to the simple cylinder sets of $F([0,\infty)\colon \mathcal S)$ in the following way:  for any history $h$ for paths in $F([0,\infty)\colon V)$, \begin{equation}\label{EmbeddedProcessFDD}\Pbm(\C(h)) = \PbG(\C(h^m)).\end{equation} Since the set of simple cylinder sets is a $\pi$-system that generates the $\sigma$-algebra of cylinder sets, the measure defined by extending these probabilities to the $\sigma$-algebra is unique.

The triple $(F([0,\infty)\colon \mathcal S), \Pbm, Y)$ is a stochastic process with state space $\mathcal S$.  For appropriate choices of primitive process and sequence of spatiotemporal embedding, there will be for each $m$ a version of the process in $D([0,\infty)\colon \mathcal S)$.  Continue to write $\Pbm$ for the measure on this space that is induced by the spatiotemporal embedding.  If the sequence $(\Pbm)$ converges weakly to the probability measure $\Pb$ of a Brownian motion on $\mathcal S$, then the Brownian motion on $\mathcal S$ arises as a \emph{scaling limit} of a primitive discrete-time random walk on $G$.


\section{The primitive random walk with one component and its scaling limit}\label{Sec:One-Comp}


Although this section closely follows an earlier work \cite{WJPA} for producing a discrete time random walk and its scaling limit in $\Qp$, there are important differences.  Namely, the primitive discrete time random walk that this section constructs permits a non-zero probability for remaining at $0$ in a single step.  This section examines the effect of this non-zero probability on the continuum limit.  

\subsection{The primitive random walk}

For an appropriately chosen normalization constant $\ConXG$ and any $P_0$ in $(0,1)$, take $X$ to be a random variable that takes values in $\G$ and has probability mass function $\rho_X$ given by \begin{align}\label{eq:probmass:1-d}\rho_X([x]) = \begin{cases} \frac{\ConXG}{p^{kb}} \frac{1}{\Vol(\Shello(k))} &\text{ if } [x] \in \Shello(k),\; \forall k\in\NN\\ P_0 &\text{ if } [x] = [0].\end{cases}\end{align} Uniformity of $\rho_X$ on shells implies that \[\Prob\big(X\in\Shello(i)\big) = \begin{cases} \frac{\ConXG}{p^{ib}} &\text{ if } i\ne 0\\ P_0 &\text{ if } i=0.\end{cases}\]  Sum the geometric series to obtain the equalities %
\begin{align*}
1  = P_0 + \ConXG\Big(\frac{1}{p^b} + \frac{1}{p^{2b}} + \cdots\Big) = P_0 + \frac{\ConXG}{(p^b - 1)},
\end{align*}
which imply that %
\begin{equation}
\ConXG = (1 - P_0)(p^b - 1).
\end{equation}
Write %
\begin{equation*}
\rho(i) = \frac{\ConXG p}{p^{(i+1)b}(p-1)},
\end{equation*} %
so that %
\begin{align}\label{eq:rhodiff1-D}
\rho(i) - \rho(i+1) &= \ConXG\Big(1 - \frac{1}{p^{b+1}}\Big)\frac{p}{p^{(i+1)b}(p-1)}\notag\\&=  (1-P_0)(p^b-1)\Big(\frac{p^{b+1}-1}{p^{b+1}}\Big)\frac{p}{p^{(i+1)b}(p-1)}.
\end{align}

The law for $X$ generalizes the law given earlier \cite{WJPA}, with the goal in mind of producing a limiting Brownian motion with a range of potential diffusion constants.  Denote by $\alpha(P_0)$ the constant %
\begin{equation}\label{eq:alpha:allcases}
\alpha(P_0) = \frac{(1 - P_0)\big(p^{b+1}-1\big)}{p^b(p-1)},
\end{equation} %
and by $\phi_X$ the characteristic function for $X$.

\begin{proposition}\label{CharFunX1DdifD}%
For any $y$ in $\Zp$,  %
\[\phi_X(y) = 1 - \alpha(P_0)|y|_p^b.\]%
\end{proposition}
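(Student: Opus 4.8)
The plan is to evaluate $\phi_X$ directly as the Fourier transform of the law of $X$, that is $\phi_X(y)=\EX[\chi(Xy)]=\sum_{[x]\in\G}\rho_X([x])\,\chi(xy)$, where the pairing $\langle[x],y\rangle=\chi(xy)$ identifies $\widehat{\G}$ with $\Zp$, so that $y\in\Zp$ is the correct domain. First I would isolate the atom at $[0]$, which contributes $P_0$, and group the remaining mass by shells using the uniformity of $\rho_X$ from \eqref{eq:probmass:1-d}, writing $\phi_X(y)=P_0+\sum_{k\ge1}c_k\sum_{[x]\in\Shello(k)}\chi(xy)$ with $c_k=\ConXG\,p^{-kb}/\Vol(\Shello(k))$ the constant value of $\rho_X$ on $\Shello(k)$. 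Each shell is symmetric and $\chi(-xy)=\overline{\chi(xy)}$, so every inner sum is real and the choice of sign convention in the pairing is immaterial.

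The central computation is the inner character sum over a shell. Writing $\Shello(k)=\Ballo(k)\smallsetminus\Ballo(k-1)$ and applying Lemma~\ref{lem:LCAG:Int} with $G=\G$ and its counting measure gives $\sum_{[x]\in\Ballo(j)}\chi(xy)=\Vol(\Ballo(j))\,\indicator{(\Ballo(j))^\perp}(y)=p^{j}\,\indicator{p^{j}\Zp}(y)$, where the annihilator is identified by Proposition~\ref{Prop:DualityScaling:DiscreteA} (taking $m=0$). For a fixed $y\neq 0$ with $v\coloneqq v_p(y)$, the indicator $\indicator{p^{j}\Zp}(y)$ equals $1$ precisely when $j\le v$, so only finitely many shells survive: each shell $\Shello(k)$ with $1\le k\le v$ contributes $p^{k-1}(p-1)$, the single boundary shell $\Shello(v+1)$ contributes $-p^{v}$, and all larger shells vanish.

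Substituting $c_k$ and $\Vol(\Shello(k))=p^{k}(1-p^{-1})$ collapses the interior part to the geometric sum $\ConXG\sum_{k=1}^{v}p^{-kb}=\ConXG\,(1-p^{-vb})/(p^{b}-1)=(1-P_0)(1-p^{-vb})$ after inserting $\ConXG=(1-P_0)(p^{b}-1)$, and turns the boundary term into $-\ConXG\,p^{-(v+1)b}/(p-1)$. Adding $P_0$ and factoring out $(1-P_0)p^{-vb}$ leaves the bracket $1+\dfrac{p^{b}-1}{p^{b}(p-1)}=\dfrac{p^{b+1}-1}{p^{b}(p-1)}$, which is exactly the constant defining $\alpha(P_0)$ in \eqref{eq:alpha:allcases}; since $p^{-vb}=|y|_p^{b}$, this yields $\phi_X(y)=1-\alpha(P_0)|y|_p^{b}$. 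The case $y=0$ is immediate, as $\phi_X(0)=1$ and $|0|_p^{b}=0$.

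I expect the main obstacle to be the bookkeeping around the boundary shell $\Shello(v+1)$. The telescoping of the ball-indicator sums produces exactly one negative term, at radius $p^{v+1}$ just outside the support $p^{v}\Zp$ of the relevant annihilator, and it is precisely this term that supplies the extra factor $\tfrac{p^{b}-1}{p^{b}(p-1)}$ needed to assemble $\alpha(P_0)$; dropping it, or shifting the index on $\Vol(\Shello(k))$ by one, changes the constant. Absolute convergence of $\sum_k c_k\Vol(\Shello(k))=\ConXG\sum_k p^{-kb}$ makes the regrouping and the passage to a finite sum automatic, so the only real care needed is in this combinatorial boundary accounting.
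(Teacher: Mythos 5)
Your proof is correct and follows essentially the same route as the paper's: both expand $\phi_X$ over the atom at $[0]$ plus the shells, reduce each shell character sum to centered-ball integrals evaluated by Lemma~\ref{lem:LCAG:Int} with annihilators identified via Proposition~\ref{Prop:DualityScaling:DiscreteA}, and finish with the same finite geometric series, and your boundary-shell accounting at $k=v_p(y)+1$ is exactly right. The only difference is organizational---the paper performs a summation by parts on the shell coefficients before specializing $y$, while you fix $y$ first and track the surviving shells directly---which changes nothing of substance.
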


\begin{proof}
Decompose the integral that defines the Fourier transform of $\phi_X$ into integrals over shells in $\G$ to obtain the equalities
\begin{align*}
\phi_X(y) &= \int_\G \langle -[x],y\rangle\rho_X([x]) \,\d[x] \\ %
&= \int_{\Ballo(0)} \langle -[x],y\rangle\rho_X([x]) \,\d[x] + \sum_{i\in\NN}\int_{\Shello(i)} \langle -[x],y\rangle\rho_X([x]) \,\d[x]\\ %
&= P_0\int_{\Ballo(0)} \langle [x],y\rangle \,\d[x] + \sum_{i\in\NN}\rho(i)\int_{\Shello(i)} \langle [x],y\rangle \,\d[x]\\ %
&= P_0\int_{\Ballo(0)} \langle [x],y\rangle \,\d[x] + \sum_{i\in\NN}\rho(i)\Bigg(\int_{\Ballo(i)} \langle [x],y\rangle \,\d[x] - \int_{\Ballo(i-1)} \langle [x],y\rangle \,\d[x]\Bigg)\\ %
&= P_0 - \rho(1)\int_{\Ballo(0)} \langle [x],y\rangle \,\d[x] + \sum_{i\in\NN}(\rho(i)-\rho(i+1))\int_{\Ballo(i)} \langle [x],y\rangle \,\d[x].
\end{align*}
Lemma~\ref{lem:LCAG:Int} and \eqref{eq:rhodiff1-D} together imply that %
\begin{equation}
\phi_X(y) = P_0 - \frac{\ConXG}{p^{b}}\frac{1}{p-1} + \Big(\frac{p^{b+1}-1}{p^{b+1}}\Big)\frac{p}{p-1}\ConXG\sum_{i\in\NN}\frac{1}{p^{ib}} {\mathds 1}_{\Ball(-i)}(y),
\end{equation}
and so for any $k$ in $\NN$ and $y$ in $\Shell(-k)$, %
\begin{align*}
\phi_X(y)  &=  P_0 - \frac{\ConXG}{p^{b}}\frac{1}{p-1} + \Big(\frac{p^{b+1}-1}{p^{b+1}}\Big)\frac{p}{p-1}\ConXG\sum_{i=1}^k\frac{1}{p^{ib}}\\
&=1 - \Bigg(\frac{(1 - P_0)\big(p^{b+1}-1\big)}{p^b(p-1)}\Bigg)\frac{1}{p^{bk}}.
\end{align*}

\end{proof}

Take $X_0$ to be a random variable that is almost surely equal to $[0]$ and $(X_i)$ to be a sequence of independent random variables with the same distribution as $X$.  For any $n$ in $\No$, take \[S_n = X_0 + X_1 + \cdots + X_n\] and denote by $\rho^\ast(n, \cdot)$ the probability mass function for $S_n$.  Denote by $\phi(i)$ the quantity \[\phi(i) = \phi_X(y),\] where $y$ is any element of $\Shell(-i)$.

\begin{proposition}\label{Rhon1DdifD}
For any $g$ in $\G$,
\begin{align*}\rho^\ast(n, g) &= (1-\alpha)^n\mathds 1_{\Ballo(0)}(g) + \sum_{i\in \mathds N}\Bigg(\Big(1-\frac{\alpha}{p^{ib}}\Big)^n - \Big(1-\frac{\alpha}{p^{(i-1)b}}\Big)^n\Bigg)\frac{1}{p^{i}}\mathds 1_{\Ballo(i)}(g).\end{align*}
\end{proposition}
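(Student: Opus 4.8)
The plan is to recover the law of $S_n$ by Fourier inversion and then reorganize the resulting shell-by-shell integral into the stated telescoped series. Since $X_0$ is almost surely the identity, $S_n$ is the $n$-fold convolution of the law of $X$ with itself, so its characteristic function is $\phi_X^n$ and its mass function is $\rho^\ast(n,\cdot)=\mathcal F_0^{-1}(\phi_X^n)$, where $\mathcal F_0$ and $\mathcal F_0^{-1}$ are the Fourier transform and its inverse relating $L^2(\G)$ and $L^2(\Zp)$. By Proposition~\ref{CharFunX1DdifD} the factor $\phi_X$ is radial on $\widehat\G=\Zp$: it takes the value $\phi(i)^n=(1-\alpha/p^{ib})^n$ on the shell $\Shell(-i)=\{y\colon |y|_p=p^{-i}\}$ for each $i$ in $\No$, so $\phi_X^n$ is a bounded step function that is constant on shells.

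Writing out the inversion integral and partitioning $\Zp$ into its shells gives
\[
\rho^\ast(n,[x])=\int_{\Zp}\langle [x],y\rangle\,\phi_X(y)^n\,\d y=\sum_{i\in\No}\phi(i)^n\int_{\Shell(-i)}\langle [x],y\rangle\,\d y .
\]
Boundedness of $\phi_X$ by $1$ together with $\Vol(\Zp)=1$ justifies the term-by-term integration. Next I would express each shell as the difference of consecutive centered balls, $\Shell(-i)=\Ball(-i)\smallsetminus\Ball(-i-1)$ with $\Ball(-i)=p^i\Zp$, and evaluate the character integral over each ball with Lemma~\ref{lem:LCAG:Int}. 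That lemma returns $\Vol(p^i\Zp)\,\mathds 1_{(p^i\Zp)^\perp}([x])$, and Proposition~\ref{Prop:DualityScaling:DiscreteA} (with $m=0$) identifies the annihilator as $(p^i\Zp)^\perp=\Ballo(i)$, so the ball integral equals $p^{-i}\mathds 1_{\Ballo(i)}([x])$.

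It then remains to set $a_i\coloneqq p^{-i}\mathds 1_{\Ballo(i)}([x])$ and perform summation by parts on $\sum_{i\in\No}\phi(i)^n(a_i-a_{i+1})$. The telescoping rearranges this into $\phi(0)^n a_0+\sum_{i\in\NN}(\phi(i)^n-\phi(i-1)^n)a_i$; the boundary term $\phi(N)^n a_{N+1}$ vanishes as $N\to\infty$ because $a_{N+1}\le p^{-N-1}$. Substituting $\phi(0)=1-\alpha$, $\phi(i)=1-\alpha/p^{ib}$ and the values of $a_i$ produces the claimed formula, with the coefficient of $\mathds 1_{\Ballo(i)}([x])$ for $i$ in $\NN$ equal to $p^{-i}\big((1-\alpha/p^{ib})^n-(1-\alpha/p^{(i-1)b})^n\big)$ and the coefficient of $\mathds 1_{\Ballo(0)}([x])$ equal to $(1-\alpha)^n$.

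The routine parts are the two integral evaluations, which are immediate from Lemma~\ref{lem:LCAG:Int}. The main point requiring care is the bookkeeping in the summation by parts: one must check that the rearranged series converges (the reindexed coefficients decay like $p^{-i(1+b)}$ for fixed $[x]$) and that the boundary term genuinely vanishes, so that the formal telescoping is legitimate. Correctly matching the annihilator $(p^i\Zp)^\perp=\Ballo(i)$ with the Haar normalization $\Vol(p^i\Zp)=p^{-i}$ is what lines the coefficients up with the weights $p^{-i}$ in the statement.
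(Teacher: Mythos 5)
Your proposal is correct and follows essentially the same route as the paper: Fourier inversion of $\phi_X^n$, decomposition of $\Zp$ into shells, writing each shell integral as a difference of ball integrals, and a telescoping rearrangement into the stated form. The only difference is one of exposition — you make explicit the Abel summation, the vanishing boundary term, and the annihilator/volume bookkeeping via Lemma~\ref{lem:LCAG:Int} and Proposition~\ref{Prop:DualityScaling:DiscreteA}, steps the paper's proof compresses into a single displayed equality.
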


\begin{proof}
The characteristic function for $S_n$ is the $n$-fold product of the characteristic function for $X$, which implies that $\rho^\ast(n, g)$ is given by %
\begin{align*}%
\rho^\ast(n, g) &= \int_{\Gd} \phi_{X}(y)^n\langle g,y\rangle \,\d y \\  &= \sum_{i=0}^\infty \int_{\Shell(-i)} \phi_{X}(y)^n\langle g,y\rangle \,\d y 
 \\  &= \sum_{i=0}^\infty \phi(i)^n\int_{\Shell(-i)}\langle g,y\rangle \,\d y
 \\  &= \sum_{i=0}^\infty \phi(i)^n\Bigg(\int_{\Ball(-i)}\langle g,y\rangle \,\d y - \int_{\Ball(-i-1)}\langle g,y\rangle \,\d y\Bigg)
\\ & = (1-\alpha)^n\mathds 1_{\Ballo(0)}(g) \\& \qquad + \sum_{i\in \mathds N}\Bigg(\Big(1-\frac{\alpha}{p^{ib}}\Big)^n - \Big(1-\frac{\alpha}{p^{(i-1)b}}\Big)^n\Bigg)\frac{1}{p^{i}}\mathds 1_{\Ballo(i)}(g).
\end{align*}  
\end{proof}

For any $r$ in $(0,b)$, denote by $\EXG\big[|S_n|^r\big]$ the expected value of the random variable $|S_n|^r$ with respect to the measure $\PbG$ on the space of all paths in $\G$.  Take $C(r,1)$ to be the constant \[C(r,1) = \frac{p^{r+1}-p^r}{p^{r+1}-1},\]

\begin{proposition}\label{ExpV1DdifD}
For any $r$ in $(0,b)$,
\[\EXG\big[|S_n|^r\big]  = C(r,1)\sum_{i\in\mathds N}\Bigg(\Big(1-\frac{\alpha}{p^{ib}}\Big)^n - \Big(1-\frac{\alpha}{p^{(i-1)b}}\Big)^n\Bigg)\big(p^{ir}-p^{-i}\big).\]
\end{proposition}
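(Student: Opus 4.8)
The plan is to evaluate the expectation directly from the explicit law for $S_n$ furnished by Proposition~\ref{Rhon1DdifD}, by writing $\EXG[|S_n|^r]$ as an integral of $|g|^r$ against $\rho^\ast(n,\cdot)$ with respect to the counting measure on $\G$ and then reducing everything to a finite geometric sum over shells. First I would write
\[
\EXG\big[|S_n|^r\big] = \int_\G |g|^r\,\rho^\ast(n,g)\,\d g,
\]
where $\d g$ denotes integration against the unit-mass counting measure on $\G$. Since $|[0]| = 0$, the leading term $(1-\alpha)^n\mathds 1_{\Ballo(0)}$ of Proposition~\ref{Rhon1DdifD} contributes nothing, so the entire expectation is carried by the tail sum over the balls $\Ballo(i)$ with $i\ge 1$. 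Substituting the formula for $\rho^\ast(n,g)$ and interchanging the sum over $i$ with the integral over $\G$ reduces the problem to evaluating $\int_{\Ballo(i)}|g|^r\,\d g$ for each $i$.

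Next I would compute this ball integral by decomposing $\Ballo(i)$ into the shells $\Shello(k)$ with $1\le k\le i$ (the shell $\Shello(0)=\{[0]\}$ again drops out since $|[0]|=0$), on which $|g|^r = p^{kr}$ and $\Vol(\Shello(k)) = p^k(1-p^{-1})$. This produces a finite geometric series
\[
\int_{\Ballo(i)}|g|^r\,\d g = (1-p^{-1})\sum_{k=1}^{i}p^{k(r+1)} = (1-p^{-1})\,\frac{p^{r+1}\big(p^{i(r+1)}-1\big)}{p^{r+1}-1}.
\]
Multiplying by the weight $p^{-i}$ appearing in $\rho^\ast(n,\cdot)$ and using $p^{-i}\big(p^{i(r+1)}-1\big)=p^{ir}-p^{-i}$, the prefactor $(1-p^{-1})p^{r+1}/(p^{r+1}-1)$ collapses to exactly $(p^{r+1}-p^r)/(p^{r+1}-1) = C(r,1)$, while the remaining factor is $p^{ir}-p^{-i}$. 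Collecting the terms then yields the claimed identity, with $C(r,1)$ pulled out of the sum.

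The only point requiring genuine care is the interchange of summation and the convergence of the resulting series, which is precisely where the hypothesis $r\in(0,b)$ enters. I would observe that the increments $\phi(i)^n-\phi(i-1)^n = \big(1-\alpha p^{-ib}\big)^n - \big(1-\alpha p^{-(i-1)b}\big)^n$ are nonnegative, since $0\le\phi(i-1)\le\phi(i)\le1$ by Proposition~\ref{CharFunX1DdifD}; this makes the integrand nonnegative and justifies the interchange by Tonelli's theorem without any further hypotheses. Finiteness then follows from the asymptotics of the summand: for large $i$ one has $\phi(i)^n-\phi(i-1)^n = O(p^{-ib})$ while $p^{ir}-p^{-i}\sim p^{ir}$, so the general term decays like $p^{-i(b-r)}$, which is summable exactly because $r<b$. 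This convergence analysis is the main obstacle; once the geometric sum is in hand, the algebraic reduction to the constant $C(r,1)$ is routine.
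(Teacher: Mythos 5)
Your computation is, in substance, identical to the paper's own proof: the paper likewise writes $\EXG\big[|S_n|^r\big]=\int_\G |g|^r\rho^\ast(n,g)\,\d g$, substitutes Proposition~\ref{Rhon1DdifD} (with the $\Ballo(0)$ term dropping out because $|[0]|=0$), interchanges the two summations, evaluates the same geometric series $(1-p^{-1})\sum_{k=1}^{i}p^{k(r+1)}$ over shells, and collapses the prefactor to $C(r,1)$. The only difference is the order of operations — the paper decomposes $\G$ into shells before inserting the density, while you insert the density first and then decompose each ball $\Ballo(i)$ into shells — which is cosmetic.

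The one place where you go beyond the paper, the justification of the interchange, contains a false claim. You assert that $0\le\phi(i-1)\le\phi(i)\le1$ for all $i\in\NN$, so that every increment is nonnegative and Tonelli applies. This fails at $i=1$: here $\phi(0)=1-\alpha(P_0)$, and $\alpha(P_0)=\frac{(1-P_0)(p^{b+1}-1)}{p^b(p-1)}$ exceeds $1$ whenever $1-P_0>\frac{p^b(p-1)}{p^{b+1}-1}$, which is possible because $\frac{p^{b+1}-1}{p^b(p-1)}>1$ for every $b>0$; indeed Lemma~\ref{maxnorm:prim-mom-est-lemma} is stated for $\alpha\in(0,2)$ precisely for this reason. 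For $p=2$, $P_0$ near $0$, and $n$ even, one even gets $\phi(1)^n-\phi(0)^n<0$, so the integrand is genuinely signed and Tonelli, as invoked, does not cover the full sum. The paper is alert to exactly this point: its proof of the moment bound $\EXG\big[|S_n|^r\big]\le Kn^{r/b}$ splits off the $i=1$ increment and bounds it by $\varepsilon^n$ with $\varepsilon=\max\big(1-\alpha/p^{b},|1-\alpha|\big)$ rather than treating all increments as nonnegative. The repair is immediate: for $i\ge 2$ all increments are nonnegative, since $\phi(1)=1-\alpha/p^{b}>0$ and $\phi$ is increasing, so Tonelli handles the tail, while the single $i=1$ term interchanges with the integral trivially; alternatively, your own decay estimate $|\phi(i)^n-\phi(i-1)^n|=O(p^{-ib})$ gives absolute summability, so Fubini justifies the full interchange. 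With that correction your argument is complete and coincides with the paper's.
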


\begin{proof}
Follow the calculations in the earlier settings \cite[Theorem~3.1]{WJPA} to obtain the equalities %
\begin{align*}%
\EXG\big[|S_n|^r\big] &= \int_\G |g|^r\rho^\ast(n, g)\,\d g\\ & = \sum_{k\in \No}  \int_{\Shello(k)} |g|^r\rho^\ast(n, g)\,\d g%
\\ & = \sum_{k\in \NN}  \int_{\Shello(k)} |g|^r\Bigg\{\sum_{i\in \NN}\Bigg(\Big(1-\frac{\alpha}{p^{ib}}\Big)^n - \Big(1-\frac{\alpha}{p^{(i-1)b}}\Big)^n\Bigg)\frac{1}{p^{i}}\mathds 1_{\Ballo(i)}(g)\Bigg\}\,\d g%
\\ & = \sum_{i\in \NN}\Bigg(\Big(1-\frac{\alpha}{p^{ib}}\Big)^n - \Big(1-\frac{\alpha}{p^{(i-1)b}}\Big)^n\Bigg)\frac{1}{p^{i}}\sum_{k\in \NN}  \int_{\Shello(k)} |g|^r\mathds 1_{\Ballo(i)}(g)\,\d g
\\ & = \sum_{i\in \NN}\Bigg(\Big(1-\frac{\alpha}{p^{ib}}\Big)^n - \Big(1-\frac{\alpha}{p^{(i-1)b}}\Big)^n\Bigg)\frac{1}{p^{i}}\sum_{k\in \NN}  p^{kr}\int_{\Shello(k)} \mathds 1_{\Ballo(i)}(g)\,\d g
\\ & = \sum_{i\in \NN}\Bigg(\Big(1-\frac{\alpha}{p^{ib}}\Big)^n - \Big(1-\frac{\alpha}{p^{(i-1)b}}\Big)^n\Bigg)\Big(1-\frac{1}{p}\Big)\frac{1}{p^{i}}\big(p^rp + \cdots + p^{ir}p^i\big)
\\ & = \frac{p^{r+1}-p^r}{{p^{r+1}-1}}\sum_{i\in \NN}\Bigg(\Big(1-\frac{\alpha}{p^{ib}}\Big)^n - \Big(1-\frac{\alpha}{p^{(i-1)b}}\Big)^n\Bigg)\big(p^{ir}-p^{-i}\big).
\end{align*}%
\end{proof}

It is helpful to identify a more general inequality than the earlier works established.

\begin{lemma}\label{maxnorm:prim-mom-est-lemma}
For any prime $p$ and any positive real numbers $\alpha$, $b$, and $r$ so that \[0 < r < b, \quad \alpha \in (0,2), \quad \text{and} \quad \frac{\alpha}{p^b} < 1,\] and any natural number $d$, there is a constant $K$ so that \begin{align}\label{GenPrimMomentEst}&\sum_{i>1}\Bigg(\Big(1-\frac{\alpha}{p^{ib}}\Big)^n - \Big(1-\frac{\alpha}{p^{(i-1)b}}\Big)^n\Bigg)\big(p^{ir}-p^{-2i}\big) \leq Kn^{\frac{r}{b}}.\end{align}
\end{lemma}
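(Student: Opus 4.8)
The plan is to strip the sum down to its leading factor, prove two complementary pointwise bounds on the bracketed difference, and then split the sum at the natural threshold where these two bounds cross. Write $a_i \coloneqq \alpha/p^{ib}$, so that $a_i$ decreases to $0$ and $a_{i-1} = p^{b}a_i$; the hypothesis $\alpha/p^{b}<1$ gives $a_i\in(0,1)$ for every $i\ge 1$, so each factor $(1-a_i)^{n}$ lies in $(0,1)$ and the difference $D_i\coloneqq(1-a_i)^{n}-(1-a_{i-1})^{n}$ is nonnegative. Since $r>0$ we have $-2<r$, hence $0<p^{-2i}<p^{ir}$ and $0<p^{ir}-p^{-2i}<p^{ir}$ for $i\ge 1$; as $D_i\ge 0$, this means $D_i\,(p^{ir}-p^{-2i})\le D_i\,p^{ir}$. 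It therefore suffices to bound $\sum_{i\ge 2}D_i\,p^{ir}$ from above by a constant multiple of $n^{r/b}$, and the role of the subtracted term is immaterial to an upper bound.

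Next I would record two bounds on $D_i$. First, the trivial bound $D_i\le 1$, valid because both factors lie in $[0,1]$. Second, applying the mean value theorem to $t\mapsto t^{n}$ on the interval $[1-a_{i-1},\,1-a_i]$ gives $D_i=n(1-\xi)^{n-1}(a_{i-1}-a_i)$ for some intermediate $\xi$, whence $D_i\le n(a_{i-1}-a_i)=n(p^{b}-1)a_i=n(p^{b}-1)\alpha\,p^{-ib}$, using $(1-\xi)^{n-1}\le 1$. The first bound is efficient when $na_i$ is large and the second when $na_i$ is small, so the crossover sits near $p^{ib}\approx n$.

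Accordingly, set $i^{\ast}=\lfloor (\ln n)/(b\ln p)\rfloor$, so that $p^{i^{\ast}b}\le n<p^{(i^{\ast}+1)b}$, and split the sum at $i^{\ast}$. On the low range $2\le i\le i^{\ast}$ I would use $D_i\le 1$ and sum the geometric series $\sum_{i\le i^{\ast}}p^{ir}\le \tfrac{p^{r}}{p^{r}-1}\,p^{i^{\ast}r}=\tfrac{p^{r}}{p^{r}-1}\,(p^{i^{\ast}b})^{r/b}\le \tfrac{p^{r}}{p^{r}-1}\,n^{r/b}$. On the high range $i>i^{\ast}$ I would use the mean value bound together with $r-b<0$ to sum $\sum_{i>i^{\ast}}p^{i(r-b)}\le \tfrac{1}{1-p^{r-b}}\,p^{(i^{\ast}+1)(r-b)}$; inserting $p^{(i^{\ast}+1)b}>n$ and the fact that raising a larger base to the negative exponent $(r-b)/b$ yields a smaller value gives $p^{(i^{\ast}+1)(r-b)}<n^{(r-b)/b}$, so this range contributes at most $n(p^{b}-1)\alpha\cdot\tfrac{1}{1-p^{r-b}}\,n^{(r-b)/b}=\tfrac{(p^{b}-1)\alpha}{1-p^{r-b}}\,n^{r/b}$, since $1+(r-b)/b=r/b$.

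Adding the two ranges yields $\sum_{i\ge 2}D_i\,p^{ir}\le K'\,n^{r/b}$ with an explicit $K'$ depending only on $p,b,r,\alpha$; because both estimates hold for every $n\ge 1$ and $n^{r/b}\ge 1$, a single constant $K$ works uniformly in $n$, which is the claim. The only delicate point is the balancing of the split: the threshold $i^{\ast}$ must be chosen so that the factor $p^{i^{\ast}r}$ from the low range and the factor $n\,p^{i^{\ast}(r-b)}$ from the high range both land on $n^{r/b}$, which forces $p^{i^{\ast}b}\asymp n$ and is the only quantitative input beyond the two elementary bounds on $D_i$.
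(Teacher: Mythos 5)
Your proposal is correct, and it takes a genuinely different route from the paper. The paper omits the proof entirely, citing the arguments in \cite[Theorem~3.1]{WJPA} and \cite[Theorem~4.2]{W:Expo:24}, whose strategy is to interpret the telescoping differences as a lower Riemann sum for a beta-function integral of the form $\int_0^1 n(1-u)^{n-1}u^{-r/b}\,\d u$ and then bound that integral by a quotient of gamma functions, with $\Gamma(n+1)\slash\Gamma(n+1-\tfrac{r}{b}) \sim n^{r/b}$ delivering the growth rate. You instead prove two elementary pointwise bounds on $D_i = (1-a_i)^n - (1-a_{i-1})^n$ --- the trivial bound $D_i \le 1$ and the mean-value bound $D_i \le n(p^b-1)\alpha\,p^{-ib}$ --- and split the sum at the crossover index $i^\ast$ with $p^{i^\ast b}\le n < p^{(i^\ast+1)b}$, summing a geometric series on each side. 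Your argument is self-contained (no special functions, no Stirling asymptotics), yields an explicit constant $K = \tfrac{p^r}{p^r-1} + \tfrac{\alpha(p^b-1)}{1-p^{r-b}}$, and correctly handles the edge cases: nonnegativity of $D_i$ justifies discarding the $p^{-2i}$ term, the hypothesis $\alpha/p^b<1$ guarantees $a_{i-1}<1$ for all $i\ge 2$ so the difference is indeed nonnegative, and an empty low range (small $n$) causes no harm since all terms are nonnegative. What the paper's route buys in exchange is a sharper asymptotic constant, essentially $\Gamma(1-\tfrac{r}{b})$, which matches the exact large-$n$ behavior of the sum and parallels the continuous-time estimate in Lemma~\ref{lemma:continuousEstimateMoment}; your splitting argument is lossier in the constant but structurally simpler and arguably preferable as a written proof precisely because the paper leaves this step to external references.
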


The proof of Lemma~\ref{maxnorm:prim-mom-est-lemma} closely follows earlier arguments of Weisbart \cite[Theorem~3.1]{WJPA}, and Pierce, et. al. \cite[Theorem~4.2]{W:Expo:24}, and so is omitted. The idea is to compare the given sum with a lower Riemann sum approximation of a beta function to bound the integral by a quotient of gamma functions.

\begin{theorem}\label{MomEstX1DdifD}
For any $n$ in $\No$ and any $r$ in $(0,b)$, there is a constant $K$ so that \begin{equation}\label{eq:MomEstX1DdifD}\EXG\big[|S_n|^r\big] \leq Kn^{\frac{r}{b}}.\end{equation}
\end{theorem}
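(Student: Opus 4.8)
The plan is to combine the exact formula from Proposition~\ref{ExpV1DdifD} with the general estimate in Lemma~\ref{maxnorm:prim-mom-est-lemma}. Proposition~\ref{ExpV1DdifD} expresses $\EXG\big[|S_n|^r\big]$ as $C(r,1)$ times a sum over $i$ in $\NN$ of the telescoped factor $\big(1-\tfrac{\alpha}{p^{ib}}\big)^n - \big(1-\tfrac{\alpha}{p^{(i-1)b}}\big)^n$ weighted by $\big(p^{ir}-p^{-i}\big)$. The target bound in Lemma~\ref{maxnorm:prim-mom-est-lemma} has almost exactly this shape, so the proof is essentially a matter of reconciling the two expressions and checking that the hypotheses of the lemma are met.

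\medskip

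\noindent
First I would verify the hypotheses of Lemma~\ref{maxnorm:prim-mom-est-lemma}. The constraint $0<r<b$ is given. The parameter $\alpha = \alpha(P_0)$ from \eqref{eq:alpha:allcases} is positive since $P_0\in(0,1)$, and one checks that $\alpha<2$ and $\tfrac{\alpha}{p^b}<1$; the latter also guarantees that each factor $1-\tfrac{\alpha}{p^{ib}}$ lies in $(0,1)$, so the $n$-th powers are well defined and the telescoping differences are nonnegative. Next I would reconcile the two weight factors: Proposition~\ref{ExpV1DdifD} carries $\big(p^{ir}-p^{-i}\big)$ while the lemma is stated with $\big(p^{ir}-p^{-2i}\big)$. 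Since $p^{-i}\le 1$ and the telescoping factors are nonnegative, the difference between these two weights contributes a sum that is dominated by a convergent geometric series (independent of $n$), and the dominant $p^{ir}$ term is common to both; thus the discrepancy is absorbed into the constant $K$. The $i=1$ term, excluded by the lemma's sum over $i>1$, is a single bounded term likewise absorbed into $K$.

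\medskip

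\noindent
The main body of the argument is then to invoke Lemma~\ref{maxnorm:prim-mom-est-lemma} with $d=1$ to bound the tail sum by $Kn^{r/b}$, combine with the $O(1)$ contributions from the $i=1$ term and the weight discrepancy, and multiply by the finite constant $C(r,1)$. Re-absorbing all constants yields a single constant (still called $K$) for which \eqref{eq:MomEstX1DdifD} holds.

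\medskip

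\noindent
\textbf{Main obstacle.} The genuine analytic content is hidden inside Lemma~\ref{maxnorm:prim-mom-est-lemma}, whose proof is cited and omitted; at the level of this theorem the only real care needed is the bookkeeping that matches the weight $\big(p^{ir}-p^{-i}\big)$ to the lemma's $\big(p^{ir}-p^{-2i}\big)$ and handles the boundary index. I expect the subtlest point to be confirming that this mismatch and the $i=1$ term truly contribute only an $n$-independent amount, so that the exponent $r/b$ — which controls the space-time scaling and ultimately the diffusive limit — is preserved. Once that is checked the estimate follows immediately.
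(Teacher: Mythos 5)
Your proof is correct and follows essentially the same route as the paper's: isolate the $i=1$ term as a bounded (hence $O(n^{\frac{r}{b}})$) contribution and apply Lemma~\ref{maxnorm:prim-mom-est-lemma} to the sum over $i\ge 2$. You are in fact more careful than the paper, whose proof silently swaps the weight $p^{ir}-p^{-i}$ of Proposition~\ref{ExpV1DdifD} for the lemma's $p^{ir}-p^{-2i}$ and drops the prefactor $C(r,1)$; the only slight imprecision on your side is the claim that all telescoped differences are nonnegative --- for $i=1$ this can fail, since $\alpha(P_0)$ may exceed $1$ so that $(1-\alpha)^n$ oscillates in sign --- but this is harmless because your argument uses only boundedness of that single term.
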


\begin{proof}
Take \[\varepsilon = \max\Big(1-\frac{\alpha}{p^{b}}, |1-\alpha|\Big)\] to obtain the inequality \begin{equation} \Bigg|\Bigg(\Big(1-\frac{\alpha}{p^{b}}\Big)^n - \Big(1-\alpha\Big)^n\Bigg)\big(p^r-p^{-2}\big)\Bigg| \leq 2\varepsilon^np^r.\end{equation} The equality \[\lim_{n\to 0} 2\varepsilon^np^rn^{\frac{r}{b}} = 0\] implies that there is a constant $C$ so that \begin{equation}\label{MomEstX1DdifDpreEq}\Bigg|\Bigg(\Big(1-\frac{\alpha}{p^{b}}\Big)^n - \Big(1-\alpha\Big)^n\Bigg)\big(p^r-p^{-2}\big)\Bigg| \leq Cn^{\frac{r}{b}}.\end{equation}

Write \begin{equation*} I(n) = \sum_{i>1}\Bigg(\Big(1-\frac{\alpha}{p^{(i+1)b}}\Big)^n - \Big(1-\frac{\alpha}{p^{ib}}\Big)^n\Bigg)\big(p^{(i+1)r}-p^{-2(i+1)}\big)\end{equation*} to obtain the equality \[\EXG\big[|S_n|^r\big] = \Bigg(\Big(1-\frac{\alpha}{p^{b}}\Big)^n - \Big(1-\alpha\Big)^n\Bigg)\big(p^r-p^{-2}\big) + I(n),\]  so that Lemma~\ref{maxnorm:prim-mom-est-lemma} and \eqref{MomEstX1DdifDpreEq} together imply \eqref{eq:MomEstX1DdifD}.
\end{proof}


\subsection{Embedding the primitive process}


Define the process $S^{(m)}$ with state space $\Gm$ by \[S^{(m)} = Q_m\circ S.\]  This process has generator $X^{(m)}$ that is given by \[X^{(m)} = Q_m\circ X,\] with law given by $\rho_{X^{(m)}}$, where \begin{equation}\label{eq:newdensitydiscrete}\rho_{X^{(m)}}\big(Q_m([x])\big) = \rho_{X^{(m)}}\big([p^mx]_m\big) = p^{m}\rho_{X}([x]).\end{equation} For each $n$, the law for $S_n^{(m)}$ is similarly given by \begin{equation}\label{eq:newdensitydiscretenstep}\rho_m\big(n,Q_m([x])\big) = \rho_m\big(n,[p^mx]_m\big) = p^{m}\rho_\ast(n,[x]).\end{equation}

The probabilities for the cylinder sets that $S^{(m)}$ determines extend to a probability measure $\Pbm$ on $F(\No\colon \Gm)$. Denote by $\EXm$ the expected value with respect to $\Pbm$.

\begin{proposition}\label{MomEstX1DdifD}
There is a constant $K$ independent of $m$ and $n$, so that for any $m$ and $n$ in $\No$
\[\EXm\Big[\big|S^{(m)}_n\big|^r\Big] \leq Kp^{-rm}n^{\frac{r}{b}}.\]
\end{proposition}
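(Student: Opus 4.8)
The plan is to exploit the fact that $Q_m$ rescales the metric on $\G$ by the single fixed factor $p^{-m}$, so that the asserted bound reduces immediately to the moment estimate \eqref{eq:MomEstX1DdifD} already proved for the primitive walk $S$. Nothing new about the combinatorics of the walk enters; everything is driven by how the absolute value transforms under $Q_m$.

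First I would record the elementary scaling identity
\[
\big|Q_m([x])\big| = p^{-m}\,\big|[x]\big| \qquad (\forall\,[x]\in\G),
\]
which I would verify directly from the definitions. Since $Q_m([x]) = [p^m x]_m$, the image vanishes in $\Gm$ exactly when $[x]$ vanishes in $\G$ (both happen precisely when $x\in\Zp$), so the identity is trivial in that case; and when $[x]\neq[0]$ one has $\big|[p^m x]_m\big| = |p^m x|_p = p^{-m}|x|_p = p^{-m}\big|[x]\big|$. Raising to the $r$-th power gives $\big|Q_m([x])\big|^r = p^{-rm}\big|[x]\big|^r$ pointwise on $\G$. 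Next, because $S^{(m)} = Q_m\circ S$ and $Q_m$ is a group isomorphism from $\G$ onto $\Gm$, the law of $S^{(m)}_n$ under $\Pbm$ is the pushforward under $Q_m$ of the law of $S_n$ under $\PbG$. Summing the scaling identity against the point masses then yields
\[
\EXm\Big[\big|S^{(m)}_n\big|^r\Big]
= \sum_{g\in\Gm} |g|^r\,\Prob\big(S^{(m)}_n = g\big)
= \sum_{h\in\G} \big|Q_m(h)\big|^r\,\Prob\big(S_n = h\big)
= p^{-rm}\,\EXG\big[|S_n|^r\big],
\]
and applying \eqref{eq:MomEstX1DdifD} to the final expectation gives the claim with the very same constant $K$, which is manifestly independent of both $m$ and $n$.

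I do not expect a genuine obstacle here; the only point demanding care is the measure-theoretic bookkeeping linking $\PbG$ and $\Pbm$. In particular, I would flag that the factor $p^m$ appearing in \eqref{eq:newdensitydiscrete} and \eqref{eq:newdensitydiscretenstep} is a density-normalization artifact, reflecting that $\mu_m$ assigns mass $p^{-m}$ to each singleton of $\Gm$, rather than an actual change in probabilities; the point-mass probabilities are transported unchanged by the bijection $Q_m$, which is exactly what validates the pushforward identity for the expectation used above.
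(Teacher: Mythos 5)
Your proof is correct, and it rests on the same core reduction as the paper's: the exact identity $\EXm\big[|S^{(m)}_n|^r\big]=p^{-rm}\,\EXG\big[|S_n|^r\big]$ followed by an application of the primitive moment bound \eqref{eq:MomEstX1DdifD}. Where you differ is in how the identity is established. The paper lifts the expectation to an integral over $\Qp$ via Lemma~\ref{StateSpaces:Intmoverballs}, invokes the density relation \eqref{eq:newdensitydiscretenstep}, and performs a Haar-measure change of variables $x\mapsto p^m x$; you instead push forward the point masses directly under the group isomorphism $Q_m$ and use the pointwise metric scaling $|Q_m([x])|=p^{-m}|[x]|$. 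Your route is more elementary: it never touches the integral representation or the normalization factor $p^m$ in the density formulas, and your closing remark correctly identifies that factor as an artifact of $\mu_m$ giving singletons mass $p^{-m}$ rather than a change in probabilities — a point the paper's computation silently absorbs into the change of variables. What the paper's integral formulation buys in exchange is uniformity of method: the same Lemma-plus-change-of-variables template is reused verbatim for the two-component processes (Proposition~\ref{MomEstX1DdifDh}), where the state space is again handled through integrals over $\Qp^2$, so the one-dimensional proof serves as the pattern for the later ones. Either argument is complete; yours arguably makes the mechanism (isometric rescaling by $p^{-m}$ under $Q_m$) more transparent.
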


\begin{proof}
Integrate $\big|S^{(m)}_n\big|^r$ and use Lemma~\ref{StateSpaces:Intmoverballs} to obtain the equalities
\begin{align*}
\EXm\Big[\big|S^{(m)}_n\big|^r\Big] = \int_{\Gm} |[x]_m|^r\rho_m\big(n,[x]_m\big) \,{\rm d}[x]_m
 = \int_{\Qp} |[x]_m|^r\rho_m\big(n,[x]_m\big) \,{\rm d}x.%
\end{align*}
Equation \eqref{eq:newdensitydiscretenstep} implies that 
\begin{align*}%
\EXm\Big[\big|S^{(m)}_n\big|^r\Big]  = \int_{\Qp} |[x]_m|^r\rho_m\big(n,[p^{-m}x]\big) p^m\,{\rm d}x
 = \int_{\Qp} |[p^{m}x]_m|^r\rho_\ast\big(n,[x]\big)p^m \,{\rm d}(p^mx),%
\end{align*}
and a change of variables implies that
\begin{align*}%
\EXm\Big[\big|S^{(m)}_n\big|^r\Big] & = \int_{\Qp} |p^m[x]|^r\rho_\ast\big(n,[x]\big) \,{\rm d}x\\%
& = p^{-rm}\EXG\big[|S_n|^r\big] \leq p^{-rm}Kn^{\frac{r}{b}}.
\end{align*}

\end{proof}

Denote by $\gamma_m$  the function that takes $\Gm$ to $\Qp$ by \[\gamma_m\colon x+p^m\Zp \mapsto \sum_{k<m}a_x(k)p^k\] and take $\Gamma_m$ to be the function \[\Gamma_m = \gamma_m\circ Q_m.\]  Denote by $\iota_m$ the function that takes $\mathds N\times \G$ to $[0,\infty)\times \Qp$ by \[\iota_m\colon (n,[x])\mapsto \big(\lambda(m)n, \Gamma_m([x])\big),\] where $\lambda(m)$ is some appropriately chosen sequence that tends to infinity as $m$ tends to infinity.  The terms of the sequence $(\lambda(m))$ are reciprocals of the terms of a sequence of time scales.

The function $\iota_m$ acts on the process $S_n$ to produce a continuous time process in $\Qp$ by using \eqref{EmbeddedProcessFDD} to identify probabilities for the simple cylinder sets of $F([0,\infty)\colon \Qp)$ with those of $F(\No\colon \G)$.  Denote again by $\Pbm$ the probability measure on $F([0,\infty)\colon \Qp)$ obtained in this way.  For any $t$ in $[0,\infty)$,
\begin{equation}\label{eq:YttoSmExp-1D}
\EXm\big[|Y_t|^r\big]  = \EXm\Big[\big|S^{(m)}_{\floor{\lambda(m) t}}\big|^r\Big]. 
\end{equation}

Specialize the sequence of time scales so that for some positive real number $D$, the reciprocal sequence $(\lambda(m))$ is given by \begin{equation}\label{eq:lambdam:param}\lambda(m) = Dp^{mb},\end{equation} and maintain this specialization henceforth.

\begin{theorem}\label{MomEstX1DdifDQp}
For any $m$ in $\No$ and $t$ in $[0,\infty)$, there is a constant $K$ so that \[\EXm\big[|Y_t|^r\big] \leq Kt^{\frac{r}{b}}.\]
\end{theorem}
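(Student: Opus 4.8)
The plan is to combine the exact identity \eqref{eq:YttoSmExp-1D}, which rewrites the time-$t$ moment of the embedded process as a moment of the discrete walk $S^{(m)}$ evaluated at the scaled step count $\floor{\lambda(m)t}$, with the uniform moment estimate for $S^{(m)}_n$ established in the preceding proposition, namely $\EXm\big[|S^{(m)}_n|^r\big]\leq K p^{-rm} n^{\frac{r}{b}}$ with $K$ independent of both $m$ and $n$. The whole argument is then a short chain of inequalities in which the spatial rescaling and the temporal rescaling are seen to balance exactly.

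First I would substitute $n=\floor{\lambda(m)t}$ into the discrete estimate to obtain $\EXm\big[|Y_t|^r\big]\leq K p^{-rm}\,\floor{\lambda(m)t}^{\frac{r}{b}}$. Next I would discard the floor from above, using $\floor{\lambda(m)t}\leq \lambda(m)t$, and insert the parametrization \eqref{eq:lambdam:param}, $\lambda(m)=Dp^{mb}$, so that $\floor{\lambda(m)t}^{\frac{r}{b}}\leq (Dp^{mb}t)^{\frac{r}{b}}=D^{\frac{r}{b}}p^{mr}t^{\frac{r}{b}}$. Multiplying through, the prefactor $p^{-rm}$ produced by the spatial embedding cancels exactly against the factor $p^{mr}$ arising from the step count $\lambda(m)t\sim p^{mb}$, which yields $\EXm\big[|Y_t|^r\big]\leq K D^{\frac{r}{b}} t^{\frac{r}{b}}$; replacing $K$ by $KD^{\frac{r}{b}}$ delivers the claim with a constant independent of $m$ and $t$.

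There is no serious obstacle here: the content is carried entirely by the preceding proposition, and the present theorem is the observation that the diffusive balance between the spatial scale $p^{-m}$ and the reciprocal time scale $\lambda(m)=Dp^{mb}$ renders the bound uniform in $m$. The only points requiring care are that the floor must be bounded from above, so that the direction of the inequality is preserved, and that the constant furnished by the proposition is genuinely $m$-independent, which is precisely its hypothesis. This uniformity in $m$ is exactly the feature needed later for passage to the scaling limit.
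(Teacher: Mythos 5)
Your proposal is correct and follows essentially the same argument as the paper: both invoke the identity \eqref{eq:YttoSmExp-1D} to reduce to the discrete-walk moment bound $\EXm\big[|S^{(m)}_n|^r\big]\leq Kp^{-rm}n^{\frac{r}{b}}$, substitute $n=\floor{\lambda(m)t}$, bound the floor above by $\lambda(m)t=Dp^{mb}t$, and observe that $p^{-rm}$ cancels $p^{mr}$ to give the uniform bound $KD^{\frac{r}{b}}t^{\frac{r}{b}}$. Your remarks on the direction of the floor inequality and the $m$-independence of the constant match the roles these points play in the paper's proof.
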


\begin{proof}
Theorem~\ref{MomEstX1DdifD} and \eqref{eq:YttoSmExp-1D} together imply that there is a constant $K^\prime$ so that %
\begin{align*}
\EXm\big[|Y_t|^r\big] & = \EXm\Big[\big|S^{(m)}_{\floor{\lambda t}}\big|^r\Big]\\
& \leq p^{-rm}K^\prime(\floor{\lambda(m)t})^{\frac{r}{b}}\\
& \leq p^{-rm}K^\prime(Dp^{mb}t)^{\frac{r}{b}} = K^\prime D^{\frac{r}{b}}t^{\frac{r}{b}}.
\end{align*}

\end{proof}

Use Theorem~\ref{MomEstX1DdifDQp} to show that the sequence of measures $(\Pbm)$ satisfies the criterion of Chentsov \cite{cent} exactly as in the earlier work \cite[Theorem~4.6]{WJPA} to obtain Proposition~\ref{tight:1D}.

\begin{proposition}\label{tight:1D}
The stochastic process $(F([0,\infty)\colon \Qp), \Pbm, Y)$ has a version with paths in $D([0,\infty)\colon \Qp)$, a process $(D([0,\infty)\colon \Qp), \Pbm, Y)$.  The sequence of measures $(\Pbm)$ with paths in $D([0,\infty)\colon \Qp)$ is uniformly tight.
\end{proposition}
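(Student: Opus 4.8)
The plan is to establish the two claims of Proposition~\ref{tight:1D} in sequence: first that the finite-dimensional-distribution measures $\Pbm$ on $F([0,\infty)\colon\Qp)$ admit c\`adl\`ag versions on the Skorokhod space, and second that the resulting sequence is uniformly tight. Both follow from the moment estimate of Theorem~\ref{MomEstX1DdifDQp} via the Chentsov criterion, so the essential analytic work has already been done; what remains is to package it correctly. First I would verify the single-increment Chentsov bound: for $s<t$, I would use the fact that $Y_t - Y_s$ has, under $\Pbm$, the same law as $S^{(m)}_{\floor{\lambda(m)t}} - S^{(m)}_{\floor{\lambda(m)s}}$, which by the independent-increment structure of the underlying primitive random walk is distributed as $S^{(m)}_{\floor{\lambda(m)t}-\floor{\lambda(m)s}}$. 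Applying Theorem~\ref{MomEstX1DdifDQp} (or more precisely its scaling form through \eqref{eq:YttoSmExp-1D}) then gives
\[
\EXm\big[|Y_t-Y_s|^r\big]\;\le\;K\,(t-s)^{\frac{r}{b}},
\]
uniformly in $m$, once one accounts for the floor functions contributing only a bounded additive error in the time argument.

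The key point for Chentsov is to control the joint behavior across a triple $s<u<t$ by the product of one-sided increments, using the ultrametric structure. In the $p$-adic setting the relevant tightness condition takes the form
\[
\EXm\big[|Y_u-Y_s|^{r}\,|Y_t-Y_u|^{r}\big]\;\le\;K\,(t-s)^{1+\eta}
\]
for some exponent $r$ in $(0,b)$ and some $\eta>0$, where the two increments are independent under $\Pbm$ because they depend on disjoint blocks of the $(X_i)$. Independence factorizes the expectation into a product of one-sided moments, each bounded by the displayed single-increment estimate, yielding a bound of order $(u-s)^{r/b}(t-u)^{r/b}\le (t-s)^{2r/b}$. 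Choosing $r$ close enough to $b$ makes $2r/b$ exceed $1$, supplying the required $\eta$. Since the bound is uniform in $m$, the Chentsov criterion simultaneously produces the c\`adl\`ag versions and the uniform tightness of the family $(\Pbm)$.

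I would then cite \cite{cent} and the parallel execution in \cite[Theorem~4.6]{WJPA} to assert that these moment bounds imply both conclusions, rather than reproving the criterion from scratch; this is legitimate because the state space $\Qp$, the parameter space $[0,\infty)$, and the path space $D([0,\infty)\colon\Qp)$ are identical to the earlier setting, and only the measures vary. The one place demanding care is the floor-function bookkeeping: because $\lambda(m)=Dp^{mb}$ grows without bound, the discretization error $\floor{\lambda(m)t}-\floor{\lambda(m)s}$ differs from $\lambda(m)(t-s)$ by at most one, and I must confirm that this discrepancy is absorbed into the constant $K$ uniformly in $m$ rather than degrading the exponent.

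The main obstacle will be the uniformity in $m$ of the Chentsov constant, not the moment estimate itself. The single-increment bound of Theorem~\ref{MomEstX1DdifDQp} is already uniform in $m$, which is the crucial input, but I must ensure that the factorization step genuinely uses independence of increments under each fixed $\Pbm$ and that the small-time regime (where $\floor{\lambda(m)(t-s)}$ may equal $0$, forcing the increment to vanish identically) is handled so the bound holds trivially there. Verifying that the same exponent $r/b$ and the same constant work across all scales $m$ is what makes the limiting Brownian motion well-defined as a weak limit, and it is precisely this scale-independence that the parameterization \eqref{eq:lambdam:param} was chosen to secure.
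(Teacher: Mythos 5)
Your overall strategy is exactly the paper's: the paper also deduces both conclusions from the moment estimate of Theorem~\ref{MomEstX1DdifDQp} via Chentsov's criterion, and simply defers the execution to \cite[Theorem~4.6]{WJPA}. However, one step of your write-up fails as stated, and it is precisely the step you flagged as delicate. The single-increment bound
\[
\EXm\big[|Y_t-Y_s|^r\big]\;\le\;K\,(t-s)^{\frac{r}{b}}
\]
is \emph{not} uniform in $m$; the floor-function error cannot be ``absorbed into the constant.'' If $s$ and $t$ straddle a grid point $n\tau_m$ with $t-s$ much smaller than $\tau_m=1/\lambda(m)$, then $\floor{\lambda(m)t}-\floor{\lambda(m)s}=1$, so $Y_t-Y_s$ is a full single step of the embedded walk and $\EXm\big[|Y_t-Y_s|^r\big]$ is of order $p^{-rm}\asymp\tau_m^{r/b}$, which exceeds $K(t-s)^{r/b}$ for $t-s$ arbitrarily small. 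Your parenthetical diagnosis of the small-time regime is wrong for the same reason: $\floor{\lambda(m)(t-s)}=0$ does \emph{not} force $Y_t-Y_s$ to vanish, because $\floor{\lambda(m)t}-\floor{\lambda(m)s}$ can still equal $1$.

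The correct repair is a case split on the triple $s<u<t$, and it shows why the product form of Chentsov's criterion is essential rather than cosmetic. If $t-s\ge\tau_m$, then $\floor{\lambda(m)u}-\floor{\lambda(m)s}\le\lambda(m)(u-s)+1\le 2\lambda(m)(t-s)$, and likewise for the second increment, so the embedded estimate $\EXm\big[|S^{(m)}_n|^r\big]\le Kp^{-rm}n^{\frac{r}{b}}$ together with $p^{-rm}\lambda(m)^{r/b}=D^{r/b}$ gives a product bound $K'(t-s)^{2r/b}$ with $K'$ uniform in $m$. If instead $t-s<\tau_m$, then the interval $(\lambda(m)s,\lambda(m)t]$ contains at most one integer, so at least one of the two increments $Y_u-Y_s$, $Y_t-Y_u$ vanishes surely and the product expectation is zero: in the sub-grid regime it is the \emph{product}, not either factor, that is controlled. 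With this split, the independence of the two increments (disjoint blocks of the $X_i$), the choice $r\in(\tfrac{b}{2},b)$ so that $2r/b>1$, and your citations of \cite{cent} and \cite[Theorem~4.6]{WJPA} complete the argument exactly as the paper intends.
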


The proof of Proposition~\ref{prop:qp:concenration} is identical to the proof of the corresponding proposition in the prior work \cite[Theorem~4.7]{WJPA}, so it is omitted here.

\begin{proposition}\label{prop:qp:concenration}
The measure $\Pbm$ is concentrated on the subset of paths in $D([0, \infty)\colon \mathds Q_p)$ that are valued in $\Gamma_m(G)$ and are, for each natural number $n$, constant on the intervals $\big[(n-1)\tau_m, n\tau_m\big)$.
\end{proposition}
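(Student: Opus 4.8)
The plan is to read off both assertions from the finite-dimensional distributions in \eqref{EmbeddedProcessFDD} and then to promote the resulting single-time almost-sure statements to statements holding simultaneously along almost every path, using the c\`adl\`ag regularity furnished by Proposition~\ref{tight:1D}. Throughout I use that $\tau_m = 1/\lambda(m)$ and that $\Gamma_m$ is injective, being the composite of the isomorphism $Q_m$ with the section $\gamma_m$, so that under $\Pbm$ the variable $Y_t$ has the law of $\Gamma_m\big(S_{\floor{\lambda(m)t}}\big)$, where $S$ is the primitive $\G$-valued walk of measure $\PbG$. In particular each single-time law is supported on the countable set $\Gamma_m(G)$.

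First I would record the two pointwise consequences of \eqref{EmbeddedProcessFDD}. Applying it to the one-point history at time $t$ with route value $\Gamma_m(G)$ gives $\Pbm\big(Y_t \in \Gamma_m(G)\big) = \PbG\big(S_{\floor{\lambda(m)t}} \in G\big) = 1$, so $Y_t$ lies in $\Gamma_m(G)$ almost surely. For $s$ and $t$ in a common interval $[(n-1)\tau_m, n\tau_m)$ one has $\floor{\lambda(m)s} = \floor{\lambda(m)t} = n-1$, so the rule defining $h^m$ collapses the two discretized times into one and intersects their route values; the two-point version of \eqref{EmbeddedProcessFDD} thus expresses $\Pbm(Y_s = g_1, Y_t = g_2)$ through the single event $\{S_{n-1} = \Gamma_m^{-1}(g_1)\} \cap \{S_{n-1} = \Gamma_m^{-1}(g_2)\}$, which is empty when $g_1 \neq g_2$ by injectivity of $\Gamma_m$. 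Summing over the countably many pairs of distinct points of $\Gamma_m(G)$ yields $\Pbm(Y_s \neq Y_t) = 0$.

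The main step is to convert these fixed-time statements into a single event of full measure. Fix $n$ in $\NN$, let $T_n$ be a countable dense subset of $[(n-1)\tau_m, n\tau_m)$ containing the left endpoint, and discard the countable union of the null sets $\{Y_s \neq Y_t\}$ over pairs $s,t$ in $T_n$; on the remaining event $Y$ is constant on $T_n$, with common value $Y_{(n-1)\tau_m}$. Right-continuity of c\`adl\`ag paths then extends this to the whole interval: for any $t$ in $[(n-1)\tau_m, n\tau_m)$ choose $q$ in $T_n$ with $q \downarrow t$, so that $Y_t = \lim_{q\downarrow t} Y_q = Y_{(n-1)\tau_m}$. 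Intersecting over $n$ in $\NN$ produces a full-measure event on which $Y$ is constant on every interval $[(n-1)\tau_m, n\tau_m)$; intersecting once more with the countably many events $\{Y_{(n-1)\tau_m} \in \Gamma_m(G)\}$ shows that almost surely each such constant value lies in $\Gamma_m(G)$, which is exactly the claimed concentration.

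The only delicate point I anticipate is this last uncountable-to-countable reduction: the target set is specified by conditions at every $t$ in $[0,\infty)$, whereas \eqref{EmbeddedProcessFDD} constrains only finitely many times at once. The c\`adl\`ag property bridges the gap by letting a countable dense set of times pin down the entire path, so that no estimate beyond the elementary finite-dimensional computations above is required.
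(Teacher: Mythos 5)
Your proof is correct. The paper itself gives no argument for this proposition --- it defers to the proof of the corresponding statement in the prior work \cite[Theorem~4.7]{WJPA} --- and your argument is precisely the standard one that reference carries out: read off from \eqref{EmbeddedProcessFDD} that single-time laws charge only $\Gamma_m(G)$ and that two times in a common interval $\big[(n-1)\tau_m,n\tau_m\big)$ collapse under the history-discretization rule to the same discrete time (so $\Pbm(Y_s\neq Y_t)=0$), then upgrade to a path statement via a countable dense set of times and the right-continuity supplied by the c\`adl\`ag version from Proposition~\ref{tight:1D}. The only point worth making explicit is that your sum over distinct pairs $g_1\neq g_2$ in $\Gamma_m(G)$ controls $\{Y_s\neq Y_t\}$ only after also discarding the null events $\{Y_s\notin\Gamma_m(G)\}$ and $\{Y_t\notin\Gamma_m(G)\}$ from your first step; you use this implicitly, and it is harmless.
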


\subsection{The limiting processes}\label{subsec:LimitProcess1D}

The books by Kochubei \cite{Kochubei:Book:2001} and Z\'{u}\~{n}iga-Galindo \cite{Zun1} are both excellent references that provide additional background on parabolic equations over non-Archimedean fields and the theory of $p$-adic diffusion.

For any $f$ in $L^2(\Qp)$ denote by $\widehat{f}$ its Fourier transform.  For any positive real number $b$, define the Vladimirov operator on $L^2(\Qp)$ by
\[
\Delta_b f = \mathcal F^{-1}\big(\,|\cdot|_p^{b}\widehat f\:\big).
\]
With the domain \[\mathcal D(\Delta_b) = \Big\{f\in L^2(\Qp)\colon |\cdot|_p^{\,b}\,\widehat f\in L^2(\Qp)\Big\},\] the operator $\Delta_b$ is self-adjoint.
For any $t$ in $(0,\infty)$ and any function $g$ on $(0,\infty)\times \Qp$, denote by $g_t$ the function that takes any $x$ in $\Qp$ to $g(t,x)$.  Take $\Delta_b$ to act in the space variable so that \[(\Delta_b g)(t,x)=(\Delta_bg_t)(x)\] for each $g$ so that $g_t$ is in $\mathcal D(\Delta_b)$ for each $t$ in $(0,\infty)$.  The Fourier transform and its inverse similarly act in the space variable for functions defined on $(0,\infty)\times \Qp$.

Take $\sigma$ to be a positive real number. The heat kernel
\[
\rho(t,x)=\big(\mathcal F^{-1} \e^{-\sigma t|\cdot|_p^{b}}\big)(x)
\]
is the (unique) fundamental solution to \eqref{Intro:HeatEquation}, and following Varadarajan with only minor modification \cite{Varadarajan:LMP:1997},
\begin{equation}\label{ScalLim:Equation:Qppdf}
\rho(t,x)=\sum_{k\in\Z} p^{k}\left(\e^{-\sigma t p^{kb}}-\e^{-\sigma t p^{(k+1)b}}\right)\indicator{\Ball(-k)}(x).
\end{equation}
The family $\{\rho(t,\cdot)\}_{t>0}$ is a convolution semigroup of probability densities.  These kernels determine a probability measure $\Pb$ on $D([0,\infty)\colon\Qp)$ concentrated on paths that are initially at $0$. The resulting process $(D([0,\infty)\colon\Qp),\Pb,Y)$ is a $p$-adic Brownian motion.

\subsection{Convergence of the processes}

For each $t$ in $(0, \infty)$, there is a sequence $(t_m)$ so that for each $m$, \[\floor{\lambda(m)t} = \floor{\lambda(m)t_m} = \lambda(m)t_m, \quad \text{and so} \quad |t_m-t| < \frac{1}{\lambda(m)}.\] Since $(\lambda_m)$ diverges to infinity, $(t_m)$ converges to $t$.

To prove the convergence of the measures $(\Pbm)$ to some limiting measure requires determining the convergence of the finite dimensional distributions of the measures. It is useful to compute the characteristic function $\phi_{X^{(m)}}$ for the random variable $X^{(m)}$, since it may be used to determine the law for $S_n^{(m)}$ for each $n$.  The characteristic function has a simple, closed form, where the law does not.

\begin{proposition}\label{prop:1-dChar}
For any $y$ in $p^{-m}\Zp$, the characteristic function $\phi_{X^{(m)}}$ for $X^{(m)}$ is given by the equality 
\[\phi_{X^{(m)}}(y) = 1 - \frac{\alpha(P_0)|y|^b}{p^{mb}}.\]
\end{proposition}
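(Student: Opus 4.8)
The statement relates the characteristic function of the embedded generator $X^{(m)} = Q_m \circ X$ to that of the primitive generator $X$, whose characteristic function was computed in Proposition~\ref{CharFunX1DdifD} to be $\phi_X(y) = 1 - \alpha(P_0)|y|_p^b$ for $y$ in $\Zp$. The plan is to unwind the definition of the characteristic function for $X^{(m)}$ as an integral over $\Gm$ against the pushforward density $\rho_{X^{(m)}}$, and then reduce that integral to the already-known integral defining $\phi_X$ via the scaling map $Q_m$.

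\emph{First} I would write out $\phi_{X^{(m)}}(y) = \int_{\Gm} \langle -[z]_m, y\rangle_m\, \rho_{X^{(m)}}([z]_m)\,\d[z]_m$ for $y$ in the dual group $p^{-m}\Zp$. \emph{Next}, substitute $[z]_m = Q_m([x]) = [p^m x]_m$, using the density relation \eqref{eq:newdensitydiscrete}, namely $\rho_{X^{(m)}}([p^m x]_m) = p^m \rho_X([x])$; the factor $p^m$ is exactly compensated by the fact that the counting measure $\mu_m$ assigns mass $p^{-m}$ to each singleton, so the measures match up and the integral over $\Gm$ becomes an integral over $\G$ against $\rho_X$. \emph{Then} I would track how the pairing transforms: under the pairing \eqref{eq:pairing-1d}, $\langle [p^m x]_m, y\rangle_m = \chi(p^m x\, y) = \langle [x], p^m y\rangle$, so the character evaluated at the scaled point $[p^m x]_m$ against $y$ equals the unscaled character against the rescaled dual variable $p^m y$. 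This rewrites $\phi_{X^{(m)}}(y)$ exactly as $\phi_X(p^m y)$.

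\emph{Finally}, since $y$ lies in $p^{-m}\Zp$, the point $p^m y$ lies in $\Zp$, so Proposition~\ref{CharFunX1DdifD} applies directly and gives $\phi_X(p^m y) = 1 - \alpha(P_0)|p^m y|_p^b = 1 - \alpha(P_0)\,p^{-mb}|y|_p^b$, using multiplicativity of the $p$-adic absolute value and $|p^m|_p = p^{-m}$. Here $|y|_p$ for $y$ in the dual group should be read as $|y|$ in the sense the paper fixes for these embedded points, matching the notation $|y|^b$ in the statement. This yields the claimed formula.

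The main obstacle is purely bookkeeping rather than conceptual: one must verify carefully that the scaling $Q_m$ on $\Gm$ dualizes to multiplication by $p^m$ on the dual side $p^{-m}\Zp$, so that the character $\langle -[p^m x]_m, y\rangle_m$ reduces correctly to $\langle -[x], p^m y\rangle$, and that the measure-theoretic factor of $p^m$ from the density exactly cancels the normalization $p^{-m}$ of $\mu_m$. Getting these two scalings to align is where an error would most easily creep in; once they are confirmed, the result is an immediate application of Proposition~\ref{CharFunX1DdifD}.
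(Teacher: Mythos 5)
Your proof is correct and follows essentially the same route as the paper: both reduce $\phi_{X^{(m)}}(y)$ to $\phi_X(p^m y)$ by combining the density relation \eqref{eq:newdensitydiscrete} with the fact that scaling by $Q_m$ dualizes to multiplication by $p^m$ on $p^{-m}\Zp$, and then invoke Proposition~\ref{CharFunX1DdifD}. The only difference is bookkeeping: the paper routes the computation through integrals over $\Qp$ (via Lemma~\ref{StateSpaces:Intmoverballs}) and a change of variables there, while you track the cancellation of the $p^m$ density factor against the $p^{-m}$ mass of the counting measure directly at the level of the discrete groups — both are the same cancellation.
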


\begin{proof}
Take the Fourier transform of $\rho_{X^{(m)}}$ and use Lemma~\ref{StateSpaces:Intmoverballs} to obtain the equalities
\begin{align*} 
\phi_{X^{(m)}}(y) = \int_{\Gm} \langle -[x]_m, y\rangle_m\rho_{X^{(m)}}([x]_m) \,\d [x]_m%
 = \int_{\Qp} \langle -[x]_m, y\rangle_m\rho_{X^{(m)}}\big([x]_m\big) \,\d x.%
\end{align*}
Equation \eqref{eq:newdensitydiscrete} implies that
\begin{align*} 
\phi_{X^{(m)}}(y) = \int_{\Qp} \langle -[x]_m, y\rangle_m\rho_X\big([p^{-m}x]\big)p^m \,\d x.
\end{align*}
A change of variables and Proposition~\ref{CharFunX1DdifD} together imply that
\begin{align*} 
\phi_{X^{(m)}}(y) & = \int_{\Qp} \langle -[p^{m}x]_m, y\rangle_m\rho_X([x])p^m \,\d (p^{m}x) \\
& =\int_{\Qp} \langle -[x], p^my\rangle\rho_X([x]) \,\d x  %
= 1 - \frac{\alpha(P_0)|y|^b}{p^{mb}}.
\end{align*}
\end{proof}

The Fourier transform takes an $n$-fold convolution of functions to an $n$-fold product of the Fourier transforms of the functions.  For each $n$, the characteristic function $\phi^{(m)}_n$ for $S_n^{(m)}$ is, therefore, an $n$-fold product.

\begin{corollary}
For each natural number $n$, 
\begin{equation*}
\phi^{(m)}_n(y) = \left(1 - \frac{\alpha(P_0)|y|^b}{p^{mb}}\right)^n.
\end{equation*}
\end{corollary}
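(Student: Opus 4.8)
The plan is to read the corollary off directly from Proposition~\ref{prop:1-dChar} together with the multiplicativity of the characteristic function under convolution, as already anticipated in the sentence preceding the statement. The starting observation is structural: since $S^{(m)} = Q_m \circ S$ and $S_n = X_0 + X_1 + \cdots + X_n$ with the $X_i$ independent, and since $Q_m$ is a group homomorphism, the embedded sum factors as $S_n^{(m)} = X_0^{(m)} + X_1^{(m)} + \cdots + X_n^{(m)}$, where each $X_i^{(m)} = Q_m \circ X_i$ for $i \geq 1$ is an independent copy of $X^{(m)}$ with law $\rho_{X^{(m)}}$, and $X_0^{(m)}$ is almost surely the identity $[0]_m$.

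First I would record that the law of $S_n^{(m)}$ is the $n$-fold convolution of $\rho_{X^{(m)}}$: independence of the summands makes the density of their sum the convolution of the individual densities, and the contribution of $X_0^{(m)}$, being the Dirac mass at the group identity, acts as the convolution unit and may be dropped. Next I would invoke the stated fact that the Fourier transform $\mathcal F_m$ on $\Gm$ carries this $n$-fold convolution to the $n$-fold pointwise product of the transforms, so that for any $y$ in $p^{-m}\Zp$,
\[
\phi^{(m)}_n(y) = \phi_{X^{(m)}}(y)^n.
\]
Substituting the closed form for $\phi_{X^{(m)}}(y)$ supplied by Proposition~\ref{prop:1-dChar} then gives the claimed expression $\left(1 - \alpha(P_0)|y|^b p^{-mb}\right)^n$.

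There is no genuine obstacle here: the statement is a one-line consequence of the preceding proposition. The only two points meriting a sentence of care are verifying that $Q_m$ commutes with the group operation, so that the image of a sum of independent increments is again a sum of independent increments, and noting that the deterministic initial summand $X_0^{(m)}$ contributes only the multiplicative factor $1$ to the characteristic function. Both are immediate, so the corollary follows at once.
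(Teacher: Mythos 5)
Your proposal is correct and follows essentially the same route the paper takes: the paper justifies this corollary in one line by observing that the Fourier transform carries the $n$-fold convolution giving the law of $S_n^{(m)}$ to the $n$-fold product of characteristic functions, then cites Proposition~\ref{prop:1-dChar}. Your added care about $Q_m$ being a homomorphism and $X_0^{(m)}$ acting as the convolution unit is sound but fills in details the paper leaves implicit rather than taking a different approach.
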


Take $E_m$ to be the function that is defined for any $(n,y)$ in $\mathds N_0\times \Qp$ by \[E_m(n, y) = \begin{cases}  
\left(1 - \frac{\alpha(P_0)|y|^b}{p^{mb}}\right)^{n} &\mbox{if }|y|_p \leq p^{m}\\0 &\mbox{if }|y|_p > p^m.\end{cases}\]

Take the inverse Fourier transform of the characteristic function for $S^{(m)}_{n}$ and use \eqref{eq:pairing-1d} to obtain the equalities%
\begin{align*}
\rho_m(n,[x]_m) &= \int_{p^{-m}\Zp} \langle [x]_m, y\rangle_m \left(1-\frac{\alpha(P_0) |y|_p^b}{p^{mb}}\right)^{n}\,\d y\\
&= \int_{\Qp} \chi(xy) E_m(n,y)\,\d y.
\end{align*}

For each positive real number $t$, denote ambiguously by $\rho_m(t,[\cdot]_m)$ the function that is given by \[\rho_m(t,[x]_m) = \rho_m(\lambda(m)t_m, [x]_m).\] For any $y$ in $\Qp$, \begin{equation}\label{DiffusionConst1-D}E_m(\lambda(m)t_m, y)) \to \e^{-D\alpha(P_0)t|y|_p^b}.\end{equation}  With this in mind, take \[D = \frac{\sigma}{\alpha(P_0)}\] to precisely determine the asymptotic behavior of $\lambda(m)$.

\begin{lemma}\label{lem:unifcon1-d:A}
For any positive real number $t$, the sequence of functions $(\rho_m(t,[\cdot]_m))$ converges uniformly on $\Qp$ to the probability density function $\rho(t, \cdot)$.
\end{lemma}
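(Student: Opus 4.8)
The plan is to represent both $\rho_m(t,[\cdot]_m)$ and $\rho(t,\cdot)$ as inverse Fourier transforms and reduce the uniform convergence to a single $L^1$-estimate on the Fourier side. Writing $h_m = E_m(\lambda(m)t_m,\cdot) - \e^{-\sigma t|\cdot|_p^{b}}$, the identity $\rho_m(t,[x]_m) = \int_{\Qp}\chi(xy)E_m(\lambda(m)t_m,y)\,\d y$ together with $\rho(t,x) = \invFourier\big(\e^{-\sigma t|\cdot|_p^{b}}\big)(x)$ gives $\rho_m(t,[x]_m)-\rho(t,x) = \invFourier(h_m)(x)$. Since $|\chi(xy)|=1$, the supremum of an inverse Fourier transform is controlled by the $L^1$-norm of its argument, so $\sup_{x\in\Qp}\big|\rho_m(t,[x]_m)-\rho(t,x)\big| \le \|h_m\|_{L^1(\Qp)}$, a bound free of $x$. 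It therefore suffices to prove that $\|h_m\|_{L^1(\Qp)}\to 0$, and I would do this by dominated convergence.

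The integrand converges pointwise: for each fixed $y$ the point eventually lies in $\{|y|_p\le p^m\}$, and there \eqref{DiffusionConst1-D} gives $E_m(\lambda(m)t_m,y)=\big(1-\alpha(P_0)|y|_p^{b}p^{-mb}\big)^{Dp^{mb}t_m}\to \e^{-D\alpha(P_0)t|y|_p^{b}}=\e^{-\sigma t|y|_p^{b}}$, using $D\alpha(P_0)=\sigma$ and $t_m\to t$. The crux is to produce an $m$-independent integrable majorant. First, \eqref{eq:alpha:allcases} and $P_0\in(0,1)$ give $\alpha(P_0) = (1-P_0)\tfrac{p^{b+1}-1}{p^b(p-1)} < \tfrac{p}{p-1}\le 2$, so on the support $\{|y|_p\le p^m\}$ the quantity $u\coloneqq \alpha(P_0)|y|_p^{b}p^{-mb}$ lies in $[0,\alpha(P_0)]\subset[0,2)$. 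I would then establish the elementary inequality $-\log|1-u|\ge \kappa\,u$ on $[0,\alpha(P_0)]$ with $\kappa\coloneqq\min\big(1,\,\tfrac{-\log(\alpha(P_0)-1)}{\alpha(P_0)}\big)>0$ when $\alpha(P_0)>1$ (and $\kappa=1$ otherwise): indeed $-\log(1-u)\ge u$ on $[0,1)$, while $0<u-1\le\alpha(P_0)-1<1$ on $(1,\alpha(P_0)]$. Raising $|1-u|$ to the integer power $Dp^{mb}t_m$ converts this into $\big|E_m(\lambda(m)t_m,y)\big| \le \e^{-\kappa D t_m\alpha(P_0)|y|_p^{b}}$, and since $t_m\to t$ I may fix $m$ large enough that $t_m\ge t/2$, obtaining the uniform bound $\big|E_m(\lambda(m)t_m,y)\big|\le \e^{-c|y|_p^{b}}$ with $c=\tfrac12\kappa D\alpha(P_0)t>0$; this holds trivially where $E_m=0$ (the region $|y|_p>p^m$ and the single exceptional shell $u=1$).

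With $|h_m(y)|\le \e^{-c|y|_p^{b}}+\e^{-\sigma t|y|_p^{b}}$, an $m$-independent $L^1(\Qp)$ majorant, the dominated convergence theorem yields $\|h_m\|_{L^1(\Qp)}\to 0$ and hence the asserted uniform convergence. I expect the main obstacle to be precisely the construction of this majorant: the naive estimate $|E_m|\le 1$ is useless because the integration region $\{|y|_p\le p^m\}$ has volume $p^m\to\infty$, and near the edge $|y|_p=p^m$ the base $1-\alpha(P_0)|y|_p^{b}p^{-mb}$ can be negative when $\alpha(P_0)>1$, so one cannot simply invoke $1-x\le\e^{-x}$. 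The logarithmic inequality above, valid on all of $[0,\alpha(P_0)]$ with a single positive constant $\kappa$, is what simultaneously tames the growing domain through genuine sub-exponential decay in $|y|_p^{b}$ and absorbs the possible sign change of the base.
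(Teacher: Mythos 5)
Your proposal follows essentially the same route as the paper: both represent $\rho(t,\cdot)$ and $\rho_m(t,[\cdot]_m)$ as inverse Fourier transforms and bound the supremum of the difference by the $x$-independent $L^1$ distance between $\e^{-\sigma t|\cdot|_p^b}$ and $E_m(\lambda(m)t_m,\cdot)$. The paper's proof simply asserts that this $L^1$ distance tends to zero, whereas your dominated-convergence argument with the logarithmic majorant (correctly handling the possibly negative base when $\alpha(P_0)>1$, since $\alpha(P_0)\in(0,2)$) supplies the justification that the paper leaves implicit.
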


\begin{proof}
For any $x$ in $\Qp$, %
\begin{align*}
\left|\rho(t,x) - \rho_m(t,[x]_m)\right| & = \left|\int_{\Qp} \chi(xy) \e^{-\sigma t |y|_p^b}\,\d y - \int_{\Qp} \chi(xy) E_m(\lambda(m)t_m,y)\,\d y\right|\\%
& \leq \int_{\Qp} \left|\e^{-\sigma t |y|_p^b}-E_m(\lambda(m)t_m,y)\right|\,\d y \to 0
\end{align*}
Independence of the integral to the right of the inequality on $x$ implies the uniform convergence of $\rho(t, \cdot)$ to $\rho_m(t, [\cdot]_m)$ on $\Qp$.

\end{proof}

Denote by $H_R$ the set of \emph{restricted histories} for paths in $D([0, \infty)\colon \Qp)$, the set of all histories whose route is a finite sequence of balls.

\begin{proposition}
For any restricted history $h$ in $H_R$, \[\Pbm(\C(h)) \to \Pb(\C(h)).\]
\end{proposition}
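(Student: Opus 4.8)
The plan is to reduce both $\Pbm(\C(h))$ and $\Pb(\C(h))$ to explicit $N$-fold integrals of products of transition densities over the balls of the route, and then to pass to the limit using a uniform-convergence estimate for those densities together with the finiteness of the ball volumes.

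First I would record the finite-dimensional distributions in integral form. Write the restricted history as $h = ((0,U_0),(t_1,U_1),\dots,(t_N,U_N))$ with each $U_i$ a ball, put $t_0 = 0$, $x_0 = 0$, and $n_i = \floor{\lambda(m)t_i}$ with $n_0 = 0$. Because the embedded walk starts at $\Gamma_m([0]) = 0$, the case $0\notin U_0$ forces both sides to vanish, so assume $0\in U_0$. For $m$ large enough that every $U_i$ has radius at least $p^{-m}$, the route balls decompose into cosets of $p^m\Zp$, so Lemma~\ref{StateSpaces:Intmoverballs} converts the defining sum \eqref{eq:PremeasureFromAbstract}, pushed forward through $\iota_m$ by \eqref{EmbeddedProcessFDD} and expanded via the independence of the increments of $S^{(m)}$, into
\[
\Pbm(\C(h)) = \int_{U_1}\!\cdots\!\int_{U_N}\prod_{i=1}^N \rho_m\big(n_i-n_{i-1},[x_i-x_{i-1}]_m\big)\,\d x_1\cdots\d x_N.
\]
The standard construction \eqref{eq:ProbDetermination} gives the analogous formula for the limit,
\[
\Pb(\C(h)) = \int_{U_1}\!\cdots\!\int_{U_N}\prod_{i=1}^N \rho\big(s_i, x_i-x_{i-1}\big)\,\d x_1\cdots\d x_N,
\]
where $s_i = t_i - t_{i-1}$.

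Next I would upgrade Lemma~\ref{lem:unifcon1-d:A} from a single time to the increments. Since $\lambda(m)\to\infty$, the floors obey $(n_i - n_{i-1})/\lambda(m)\to s_i$, so re-running the argument behind \eqref{DiffusionConst1-D} with the exponent $n_i - n_{i-1}$ gives $E_m(n_i - n_{i-1},y)\to \e^{-\sigma s_i|y|_p^b}$ pointwise in $y$; dominated convergence over $\Qp$ then yields $\rho_m(n_i - n_{i-1},[\cdot]_m)\to\rho(s_i,\cdot)$ uniformly on $\Qp$, exactly as in Lemma~\ref{lem:unifcon1-d:A}. For fixed $s_i$ the heat kernel $\rho(s_i,\cdot)$ is bounded by \eqref{ScalLim:Equation:Qppdf}, and uniform convergence makes the $\rho_m$ factors eventually lie within $1$ of it, so all densities in sight are bounded by a single constant $M$.

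Finally I would compare the integrands by telescoping the product: using $\big|\prod_i a_i - \prod_i c_i\big|\le M^{N-1}\sum_i |a_i - c_i|$ whenever $|a_i|,|c_i|\le M$, together with the uniform error $\varepsilon_m = \max_i \sup_{\Qp}\big|\rho_m(n_i - n_{i-1},[\cdot]_m) - \rho(s_i,\cdot)\big|\to 0$, the two integrands differ by at most $N M^{N-1}\varepsilon_m$ uniformly on $U_1\times\cdots\times U_N$. Integrating over this set of finite volume $\prod_i \Vol(U_i)$ gives $\big|\Pbm(\C(h)) - \Pb(\C(h))\big|\le \big(\prod_i \Vol(U_i)\big)N M^{N-1}\varepsilon_m\to 0$. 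I expect the main obstacle to be the bookkeeping in the first step — justifying that the embedding \eqref{EmbeddedProcessFDD} and Lemma~\ref{StateSpaces:Intmoverballs} genuinely produce the stated product-of-densities integral, with the correct handling of the discretized times $n_i$, the translation invariance of the increments, and the initial constraint $0\in U_0$ — after which the analytic limit is the routine uniform-convergence argument above.
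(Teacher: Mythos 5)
Your proposal is correct and follows essentially the same route as the paper's proof: reduce $\Pbm(\C(h))$ to an iterated integral over $U_1\times\cdots\times U_N$ of products of the discrete densities via Lemma~\ref{StateSpaces:Intmoverballs}, then pass to the limit using the uniform convergence of Lemma~\ref{lem:unifcon1-d:A}. Your additional steps --- the telescoping product bound with a common bound $M$ on the densities, and the explicit treatment of the floor-discretized increment times $n_i - n_{i-1}$ --- simply make rigorous the details that the paper's terser ``$\to$'' step leaves implicit.
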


\begin{proof}
Take $h$ to be any restricted history and without loss in generality suppose that $U_h(0)$ is the set $\{0\}$.  Simplify the notation by writing \[e_h = (t_0, \dots, t_k) \quad {\rm and}\quad U_h = (\{0\}, U_1, \dots, U_k).\]  For any $i$ in $\{0, \dots, k\}$, denote by $r_i$ the radius of $U_i$. For any $m$ so that $p^{-m}$ is less than $\min\{r_1, \dots, r_k\}$, Lemma~\ref{StateSpaces:Intmoverballs} and the uniform convergence given by Lemma~\ref{lem:unifcon1-d:A} together imply that %
\begin{align*}
\Pbm(\C(h)) &= \Pbm\big(Y_{t_1}\in U_1, Y_{t_2}\in U_2, \dots, Y_{t_n}\in U_n\big)\\
&= \int_{U_1} \cdots \int_{U_k} \prod_{i\in\{1, \dots, k\}}\rho_m\big(t_i-t_{i-1}, [x_i -x_{i-1}]_m\big)\,{\rm d}[x_k]_m\cdots {\rm d}[x_1]_m\\
&= \int_{U_1} \cdots \int_{U_k} \prod_{i\in\{1, \dots, k\}}\rho_m\big(t_i-t_{i-1}, [x_i -x_{i-1}]_m\big)\,{\rm d}x_k\cdots {\rm d}x_1\\
&\to  \int_{U_1}\cdots \int_{U_k} \prod_{i\in\{1, \dots, k\}}\rho\big(t_i-t_{i-1}, x_i-x_{i-1}\big)\,{\rm d}x_k\cdots {\rm d}x_1 = \Pb(\C(h)).
\end{align*}

\end{proof}

Since $\C(H_{R})$ is a $\pi$-system that generates the cylinder $\sigma$-algebra in $D([0, \infty)\colon \Qp)$, the convergence holds for any set in the cylinder $\sigma$-algebra.  Together with the uniform tightness of the family of measures $\{\Pbm\colon m\in \mathds N_0\}$ that Proposition~\ref{tight:1D} guarantees, the convergence of the finite dimensional distributions of the measure $\Pbm$ to those of $\Pb$ implies Theorem~\ref{Sec:Con:Theorem:MAIN}.

\begin{theorem}\label{Sec:Con:Theorem:MAIN}
The sequence of measures $(\Pbm)$ converges weakly to $\Pb$ in $D([0, \infty)\colon \Qp)$.  Furthermore, the diffusion constant for the limiting process is \[D\alpha(P_0) = \frac{D(1 - P_0)\big(p^{b+1}-1\big)}{p^b(p-1)}.\]
\end{theorem}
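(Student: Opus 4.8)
The plan is to obtain the weak convergence from the standard abstract criterion and then to read off the diffusion constant from the limiting characteristic function. First I would record that the two ingredients needed are already in hand: Proposition~\ref{tight:1D} gives the uniform tightness of the family $(\Pbm)$ in $D([0,\infty)\colon\Qp)$, and the proposition immediately preceding the theorem establishes $\Pbm(\C(h))\to\Pb(\C(h))$ for every restricted history $h$ in $H_R$. Because the simple cylinder sets $\C(H_R)$ form a $\pi$-system generating the cylinder $\sigma$-algebra, any two probability measures agreeing on $\C(H_R)$ agree on the whole $\sigma$-algebra; in particular the limiting finite-dimensional distributions uniquely determine $\Pb$.

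Second, I would combine these two facts. Uniform tightness makes $(\Pbm)$ relatively compact, so every subsequence has a further subsequence converging weakly to some limit measure in $D([0,\infty)\colon\Qp)$. By the convergence of the finite-dimensional distributions on $H_R$, every such subsequential limit agrees with $\Pb$ on the generating $\pi$-system, hence coincides with $\Pb$. Since all subsequential limits equal $\Pb$, the full sequence $(\Pbm)$ converges weakly to $\Pb$. The one point requiring care is that $J_1$-weak convergence asks for convergence of finite-dimensional distributions only at times that are almost surely continuity points of the limit process; since the limiting L\'evy process has no fixed times of discontinuity, every fixed $t$ qualifies, so this causes no loss.

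Third, for the diffusion constant I would track the exponent produced in the scaling limit. The corollary to Proposition~\ref{prop:1-dChar} gives the characteristic function of $S^{(m)}_n$ as $\big(1-\alpha(P_0)|y|^b/p^{mb}\big)^n$; evaluating at $n=\floor{\lambda(m)t}=\lambda(m)t_m=Dp^{mb}t_m$ and letting $m\to\infty$ yields, by \eqref{DiffusionConst1-D}, the pointwise limit $\e^{-D\alpha(P_0)t|y|_p^b}$. This is exactly the Fourier multiplier of the heat semigroup for the Vladimirov operator, matching the form $\e^{-\sigma t|\cdot|_p^b}$ recorded in Section~\ref{subsec:LimitProcess1D} with effective rate $D\alpha(P_0)$. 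Substituting the value of $\alpha(P_0)$ from \eqref{eq:alpha:allcases} then gives the stated formula $D\alpha(P_0)=D(1-P_0)\big(p^{b+1}-1\big)/\big(p^b(p-1)\big)$.

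I expect the only genuinely delicate step to be the bookkeeping that identifies the weak limit as $\Pb$ itself rather than merely some L\'evy process of the right type: one must know that restricted histories separate probability measures on the Skorokhod space, which is precisely the $\pi$-system statement invoked above, together with the fact that $\Pb$ is itself characterized by its values on $\C(H_R)$. Everything else is either a direct appeal to the tightness already proved or the limit computation \eqref{DiffusionConst1-D}, so the weight of the argument rests on having correctly set up the generating class and on the continuity-point remark for the $J_1$ topology.
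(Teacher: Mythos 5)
Your proposal is correct and follows essentially the same route as the paper: the paper likewise combines the uniform tightness of Proposition~\ref{tight:1D} with the convergence $\Pbm(\C(h))\to\Pb(\C(h))$ on the $\pi$-system $\C(H_R)$ of restricted histories to conclude weak convergence, and identifies the diffusion constant through the limit \eqref{DiffusionConst1-D} together with the choice $D=\sigma/\alpha(P_0)$ and the formula \eqref{eq:alpha:allcases}. Your explicit Prokhorov subsequence argument and the remark about fixed discontinuity times in the $J_1$ topology merely spell out details the paper leaves implicit.
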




\section{The isotropic process and its approximation}\label{Sec:Prim}


Utilize the max-norm on $\Qp^2$ to define the function $\|\cdot\|$ on $\G^2$ given for each $\x$ in $\Qp$ by \[\|\x\| = \begin{cases}\|\ve{x}\|_{\max} & \text{ if } \x \ne [\ve{0}]\\0 & \text{ if } \x = [\ve{0}].\end{cases}\]  The canonical inclusion of $\Zp^2$ into $\Qp^2$ and the max-norm on $\Qp^2$ induce an ultrametric on $\Zp^2$.  Equip $\G^2$ with its coarse Pontryagin filteration.


\subsection{A primitive random walk with two components}


Take $\Shelloc(2i)$ to be coarse shells for the balls $\BBo(2i)$.  For some appropriately chosen normalization constant $\ConMG$, the random variable $\ve{X}$ with probability mass function $\rho_{\ve{X}}$ given by \[\Prob\Big(\ve{X}\in\Shelloc(2i)\Big) = \begin{cases} \frac{\ConMG}{p^{ib}} &\text{ if } i\ne 0\\ 0 &\text{ if } i=0\end{cases}\] generates a primitive discrete time random walk on $\G^2$.  To determine $\ConMG$, sum the geometric series to see that \begin{align*}1 &= \ConMG\left(\frac{1}{p^b} + \frac{1}{p^{2b}} + \cdots\right) = \frac{\ConMG}{p^b -1}, 
\end{align*}
hence \begin{equation}\label{ConMG}\ConMG = p^b-1.\end{equation}

 Take $(\ve{X}_i)$ to be a sequence of independent random variables, each with the same distribution as $\ve{X}$.  Take $\ve{X_0}$ to be equal to $[\ve{0}]$ almost surely, and \[\ve{S}_n = \ve{X}_0 + \ve{X}_1 + \cdots + \ve{X}_n.\] Specialize the calculations in the general case \cite[Proposition~4.2]{W:Expo:24} to determine the characteristic function $\phi_{\max}$ to be \[\phi_{\max}(\y)  = 1- \alpha_{\max}\|\y\|^b, \quad \text{where} \quad \alpha_{\max} = \frac{p^{b+2}-1}{p^b(p^2-1)}.\]  Continue with the specification of the general case to see that for any pair $(n,g)$ in $\mathds N\times \G^2$, \begin{align*}%
\rho^\ast(n, g) &= (1-\alpha_{\max})^n\mathds 1_{\BBo(0)}(g) \\& \qquad + \sum_{i\in \mathds N}\Big(\Big(1-\frac{\alpha_{\max}}{p^{ib}}\Big)^n - \Big(1-\frac{\alpha_{\max}}{p^{(i-1)b}}\Big)^n\Big)\frac{1}{p^{2i}}\mathds 1_{\BBo(2i)}(g).\end{align*}  The probability mass function $\rho^\ast$ induces on $F(\No\colon \G^2)$ the probability measure $\PbG$.  The stochastic process $(F(\No\colon \G^2), \PbG, S)$ is the \emph{coarse primitive process}. 

Take $C(r,d)$ to be the constant \[C(r,d) = \left(\frac{p^{r+2}-p^r}{p^{r+2}-1}\right),\] and $\EXG$ to denote the expected value with respect to the measure $\PbG$ to obtain first \cite{W:Expo:24} the equality \begin{align}\label{sec:MaxPrimitiveOrderEstimates}
\EXG\big[\|\ve{S}_n\|^r\big] &=\sum_{i\in\mathds N}C(r,d)\big(p^{ir}-p^{-2i}\big)\Big(\Big(1-\frac{\alpha_{\max}}{p^{ib}}\Big)^n - \Big(1-\frac{\alpha_{\max}}{p^{(i-1)b}}\Big)^n\Big)\end{align} and then use Lemma~\ref{maxnorm:prim-mom-est-lemma} to obtain the inequality \begin{align}\label{sec:MaxPrimitiveOrderEstimates}\EXG\big[\|\ve{S}_n\|^r\big] \leq Kn^{\frac{r}{b}}.\end{align}


\subsection{Embedding the primitive process}


Define the process $\ve{S}^{(m)}$ with state space $(\Gm)^2$ by \[\ve{S}^{(m)} = Q_m^{(2)}\circ \ve{S}.\]  This process has generator $\ve{X}^{(m)}$ that is given by \[\ve{X}^{(m)} = Q_m^{(2)}\circ \ve{X},\] and has a law given by $\rho_{\ve{X}^{(m)}}$, where %
\begin{equation}\label{eq:newdensitydiscrete2d}
\rho_{\ve{X}^{(m)}}\big(Q_m^{(2)}([\ve{x}])\big) = \rho_{\ve{X}^{(m)}}\big([p^m\ve{x}]_m\big) = p^{2m}\rho_{\ve{X}}([\ve{x}]).
\end{equation} 
For each $n$, the law $\rho_m\big(n,\cdot\big)$ for the random variable $\ve{S}_n^{(m)}$ is similarly given by %
\begin{equation}\label{eq:newdensitydiscretenstep2d}
\rho_m\big(n,Q_m^{(2)}([\ve{x}])\big) = \rho_m\big(n,[p^m\ve{x}]_m\big) = p^{2m}\rho_\ast(n,[\ve{x}]).
\end{equation}

The probabilities for the cylinder sets that $\ve{S}^{(m)}$ determines extends to a probability measure $\Pbm$ on $F(\No\colon (\Gm)^2)$. Denote by $\EXm$ the expected value with respect to $\Pbm$.  Follow the analogous proof in the one dimensional setting to obtain Proposition~\ref{MomEstX2DMdifD}.

\begin{proposition}\label{MomEstX2DMdifD}
There is a constant $K$ independent of $m$ and $n$, so that for any $m$ and $n$ in $\No$,
\[\EXm\Big[\|\ve{S}^{(m)}_n\|_{\max}^r\Big] \leq Kp^{-rm}n^{\frac{r}{b}}.\]
\end{proposition}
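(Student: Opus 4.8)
The plan is to follow the one-dimensional template of Proposition~\ref{MomEstX1DdifD} essentially verbatim, transporting every step to the two-component setting. First I would unwind the definitions so that the moment with respect to $\Pbm$ becomes an integral against the pushforward law $\rho_m(n,\cdot)$ on $(\Gm)^2$, writing
\[
\EXm\Big[\|\ve{S}^{(m)}_n\|_{\max}^r\Big]=\int_{(\Gm)^2}\|[\ve{x}]_m\|_{\max}^r\,\rho_m\big(n,[\ve{x}]_m\big)\,\d[\ve{x}]_m.
\]
Next I would invoke Lemma~\ref{StateSpaces:Intmoverballs2} to lift the integral over the discrete group to an integral over $\Qp^2$, since the integrand is constant on cosets of the relevant ball, exactly as Lemma~\ref{StateSpaces:Intmoverballs} is used in the one-dimensional proof.

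The heart of the argument is the change of variables. Using the scaling relation \eqref{eq:newdensitydiscretenstep2d}, which carries the extra factor $p^{2m}$ coming from the two-dimensional state space, I would substitute $\ve{x}\mapsto p^{-m}\ve{x}$ so that the law $\rho_m(n,[p^m\ve{x}]_m)$ is replaced by $p^{2m}\rho_\ast(n,[\ve{x}])$, while the Jacobian contributes $\d(p^m\ve{x})=p^{2m}\,\d\ve{x}$ in the opposite direction. Homogeneity of the max-norm gives $\|p^m[\ve{x}]\|_{\max}^r=p^{rm}\|[\ve{x}]\|_{\max}^r$; being careful with the sign of the exponent, the scaling pulls the norm down by $p^{-rm}$, producing the clean identity
\[
\EXm\Big[\|\ve{S}^{(m)}_n\|_{\max}^r\Big]=p^{-rm}\,\EXG\big[\|\ve{S}_n\|^r\big].
\]
The primitive-level bound $\EXG\big[\|\ve{S}_n\|^r\big]\leq Kn^{r/b}$ is already recorded in \eqref{sec:MaxPrimitiveOrderEstimates} for the coarse primitive process, and substituting it immediately yields the claimed estimate with a constant $K$ independent of both $m$ and $n$.

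The only genuine obstacle is bookkeeping of the powers of $p$: the two-dimensional setting introduces $p^{2m}$ factors in both the density rescaling and the Jacobian, and one must verify that these cancel to leave precisely the single factor $p^{-rm}$ carried by the norm, rather than an extraneous dimension-dependent power. I would check this cancellation explicitly before citing \eqref{sec:MaxPrimitiveOrderEstimates}, since it is the point at which a two-dimensional proof could silently diverge from its one-dimensional model. Everything else—the appeal to Lemma~\ref{StateSpaces:Intmoverballs2} and the uniformity of $K$ in $m$ and $n$, which is inherited directly from the $m$-free bound on $\EXG\big[\|\ve{S}_n\|^r\big]$—is routine.
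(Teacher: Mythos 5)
Your proposal is correct and follows essentially the same route as the paper, which obtains this proposition precisely by transporting the one-dimensional proof of the embedded moment estimate verbatim: Lemma~\ref{StateSpaces:Intmoverballs2} in place of Lemma~\ref{StateSpaces:Intmoverballs}, the scaling relation \eqref{eq:newdensitydiscretenstep2d}, a change of variables in which the $p^{2m}$ density factor cancels against the Jacobian, and finally the primitive-walk bound \eqref{sec:MaxPrimitiveOrderEstimates}. One caution: your displayed scaling relations have inverted exponents—since $|p^m|_p=p^{-m}$, the correct statements are $\d(p^m\ve{x})=p^{-2m}\,\d\ve{x}$ and $\|p^m\ve{x}\|_{\max}^r=p^{-rm}\|\ve{x}\|_{\max}^r$—but you flag the sign issue yourself, and your final identity $\EXm\big[\|\ve{S}^{(m)}_n\|_{\max}^r\big]=p^{-rm}\,\EXG\big[\|\ve{S}_n\|^r\big]$ and the resulting bound are right.
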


Take $\Gamma_m^{(2)}$ to be the function \[\Gamma_m^{(2)} = \Gamma_m \times \Gamma_m.\]  Denote by $\iota_m^{(2)}$ the function that takes $\mathds N\times \G^2$ to $[0,\infty)\times \Qp^2$ by \[\iota_m^{(2)}\colon (n,[\ve{x}])\mapsto \big(\lambda(m)n, \Gamma_m^{(2)}([\ve{x}])\big),\] where $\lambda(m)$ is chosen as in the dimension one case, so that for some positive real number $D$, \[\lambda(m) = Dp^{mb}.\]

The function $\iota_m^{(2)}$ acts on the process $\ve{S}_n$ to produce a continuous time process in $\Qp^2$ by using \eqref{EmbeddedProcessFDD} to identify probabilities for the simple cylinder sets of $F([0,\infty)\colon \Qp^2)$ with those of $F(\No\colon \G^2)$.  Denote again by $\Pbm$ the probability measure on $F([0,\infty)\colon \Qp^2)$ obtained in this way.

\begin{theorem}\label{MomEstX2DMdifDQp}
For any $m$ in $\No$ and $t$ in $[0,\infty)$, there is a constant $K$ so that \[\EXm\big[\|\ve{Y}_t\|_{\max}^r\big] \leq Kt^{\frac{r}{b}}.\]
\end{theorem}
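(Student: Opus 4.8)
The plan is to follow verbatim the one-dimensional template of Theorem~\ref{MomEstX1DdifDQp}, reducing the continuum moment estimate to the discrete one of Proposition~\ref{MomEstX2DMdifD}. The essential point is the two-dimensional analog of \eqref{eq:YttoSmExp-1D}: because the embedding $\iota_m^{(2)}$ sends the discrete step $n$ to the time $\lambda(m)n$ and transports the probabilities of cylinder sets of $F(\No\colon\G^2)$ to those of $F([0,\infty)\colon\Qp^2)$ through \eqref{EmbeddedProcessFDD}, evaluating a path at time $t$ amounts to reading off the walk at step $\floor{\lambda(m)t}$. Consequently
\[
\EXm\big[\|\ve{Y}_t\|_{\max}^r\big] = \EXm\Big[\big\|\ve{S}^{(m)}_{\floor{\lambda(m)t}}\big\|_{\max}^r\Big].
\]

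With this identity in hand, I would apply Proposition~\ref{MomEstX2DMdifD} at $n=\floor{\lambda(m)t}$ to obtain a constant $K^\prime$, independent of $m$ and $n$, with
\[
\EXm\Big[\big\|\ve{S}^{(m)}_{\floor{\lambda(m)t}}\big\|_{\max}^r\Big] \leq K^\prime p^{-rm}\big(\floor{\lambda(m)t}\big)^{\frac{r}{b}}.
\]
Then I would use the bound $\floor{\lambda(m)t}\leq\lambda(m)t$ together with the specialization $\lambda(m)=Dp^{mb}$ from \eqref{eq:lambdam:param}, so that $\big(\floor{\lambda(m)t}\big)^{\frac{r}{b}}\leq (Dp^{mb}t)^{\frac{r}{b}}=D^{\frac{r}{b}}p^{rm}t^{\frac{r}{b}}$. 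The factor $p^{rm}$ cancels against $p^{-rm}$, leaving the uniform bound $K^\prime D^{\frac{r}{b}}t^{\frac{r}{b}}$; setting $K=K^\prime D^{\frac{r}{b}}$ completes the argument.

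There is no genuine obstacle here, since the computation is identical in form to the one-dimensional case; the only thing requiring care is the bookkeeping behind the displayed identity, which rests on the construction of $\Pbm$ on $F([0,\infty)\colon\Qp^2)$ through $\iota_m^{(2)}$ rather than on any new analytic input. In particular, no two-dimensional refinement of the moment estimate is needed beyond Proposition~\ref{MomEstX2DMdifD}, which already carries the requisite $p^{-rm}n^{\frac{r}{b}}$ scaling.
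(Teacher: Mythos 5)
Your proposal is correct and follows exactly the paper's argument: the paper treats this theorem as the two-dimensional instance of Theorem~\ref{MomEstX1DdifDQp}, reducing via the embedding identity $\EXm\big[\|\ve{Y}_t\|_{\max}^r\big] = \EXm\big[\|\ve{S}^{(m)}_{\floor{\lambda(m)t}}\|_{\max}^r\big]$ to Proposition~\ref{MomEstX2DMdifD}, then cancelling $p^{-rm}$ against $(Dp^{mb}t)^{\frac{r}{b}}$. Your bookkeeping of the floor bound and the constant $K=K^\prime D^{\frac{r}{b}}$ matches the paper's computation verbatim.
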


Again specialize the earlier results \cite[Propositions~5.2 and 5.3]{W:Expo:24} to obtain Propositions~\ref{tight:2DM} and \ref{prop:qp:concenration2M}.

\begin{proposition}\label{tight:2DM}
The stochastic process $(F([0,\infty)\colon \Qp^2), \Pbm, \ve{Y})$ has a version with paths in $D([0,\infty)\colon \Qp^2)$, a process $(D([0,\infty)\colon \Qp^2), \Pbm, \ve{Y})$.  The sequence of measures $(\Pbm)$ with paths in $D([0,\infty)\colon \Qp^2)$ is uniformly tight.
\end{proposition}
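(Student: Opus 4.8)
The plan is to mirror the one-component argument behind Proposition~\ref{tight:1D} essentially verbatim, replacing the scalar moment estimate with its max-norm counterpart, and to feed the result into Chentsov's criterion \cite{cent}. Equivalently, this is the specialization of the general construction of \cite[Propositions~5.2 and 5.3]{W:Expo:24} to the present coarse two-component setting. Two ingredients drive the criterion: a compact-containment estimate and a moment bound on pairs of adjacent increments whose total time-exponent exceeds one. Both are supplied by the single-time estimate of Theorem~\ref{MomEstX2DMdifDQp}.

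First I would record the structural consequence of the i.i.d. construction: since $\ve{S}$ is the partial-sum process of the independent generators $\ve{X}_i$, the embedded process $\ve{Y}$ has independent increments over disjoint time intervals and stationary increments, in the sense that for $s\le t$ the increment $\ve{Y}_t-\ve{Y}_s$ has the same law under $\Pbm$ as $\ve{S}^{(m)}_{\floor{\lambda(m)t}-\floor{\lambda(m)s}}$. Combining this with Theorem~\ref{MomEstX2DMdifDQp} and the elementary bound $\floor{\lambda(m)t}-\floor{\lambda(m)s}\le \lambda(m)(t-s)+1$ yields, uniformly in $m$, an increment estimate of the form $\EXm\big[\|\ve{Y}_t-\ve{Y}_s\|_{\max}^{r}\big]\le K(t-s)^{r/b}$ for every $r$ in $(0,b)$. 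Compact containment is then immediate: since closed balls in $\Qp^2$ are compact, Markov's inequality applied to this bound (together with a maximal inequality for independent increments) confines the paths to a fixed ball on $[0,T]$ with probability uniformly close to one.

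Next I would fix $r$ in $(b/2,b)$, so that $2r/b>1$, and exploit independence of the two increments $\ve{Y}_t-\ve{Y}_s$ and $\ve{Y}_u-\ve{Y}_t$ for $s\le t\le u$:
\begin{align*}
\EXm\big[\|\ve{Y}_t-\ve{Y}_s\|_{\max}^{r}\,\|\ve{Y}_u-\ve{Y}_t\|_{\max}^{r}\big]
&= \EXm\big[\|\ve{Y}_t-\ve{Y}_s\|_{\max}^{r}\big]\,\EXm\big[\|\ve{Y}_u-\ve{Y}_t\|_{\max}^{r}\big]\\
&\le K^{2}\,(t-s)^{r/b}(u-t)^{r/b}\;\le\;K^{2}\,(u-s)^{2r/b}.
\end{align*}
Because $2r/b>1$, this is exactly the hypothesis of Chentsov's criterion, and the remaining marginal requirement is trivial since every path starts at $[\ve{0}]$. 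The criterion then furnishes both the c\`adl\`ag version $(D([0,\infty)\colon\Qp^2),\Pbm,\ve{Y})$ and the uniform tightness of $(\Pbm)$.

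The main obstacle will be the careful treatment of the time-discretization near the diagonal. For separations $t-s$ smaller than one step length $\lambda(m)^{-1}$ the bound $\lambda(m)(t-s)+1$ is dominated by its constant term, yet on such sub-step intervals the process is constant by the analog of Proposition~\ref{prop:qp:concenration2M}, so the increment vanishes identically; securing the increment estimate with a constant $K$ independent of $m$ across both the genuinely sub-step and the super-step regimes is the only delicate point, and it is handled exactly as in \cite[Theorem~4.6]{WJPA}. The ultrametric max-norm introduces no new difficulty, since its strong triangle inequality is only stronger than what the argument requires.
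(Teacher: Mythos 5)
Your overall route matches the paper's: the paper proves this proposition by specializing \cite[Propositions~5.2 and 5.3]{W:Expo:24}, and the argument there is exactly the Chentsov scheme you outline, driven by the moment estimates and independence of increments. However, one intermediate step fails as stated. The claimed uniform-in-$m$ single-increment bound $\EXm\big[\|\ve{Y}_t-\ve{Y}_s\|_{\max}^{r}\big]\le K(t-s)^{r/b}$ cannot hold for all $s<t$: if the interval $(s,t]$ straddles a grid point $n\tau_m$ with $t-s$ arbitrarily small, then $\ve{Y}_t-\ve{Y}_s$ has the law of a single step $\ve{X}^{(m)}$, whose $r$-th moment is a fixed positive quantity of order $p^{-rm}$, while $(t-s)^{r/b}$ tends to $0$; no constant $K$ (even one depending on $m$) makes the inequality true. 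Your proposed repair, that the process is constant on sub-step intervals so the increment vanishes identically, applies only to sub-step intervals containing no grid point, which is precisely the case where no repair is needed; it says nothing about straddling intervals, where the estimate actually breaks. Deferring this to \cite[Theorem~4.6]{WJPA} does not close the gap, because what is being deferred is a false statement.

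The correct repair, and what the cited arguments actually do, operates at the level of the adjacent-increment product rather than single increments. For $t_1\le t\le t_2$, there are two regimes. If $\lambda(m)(t_2-t_1)<1$, then $\floor{\lambda(m)t_2}-\floor{\lambda(m)t_1}\le 1$, so at least one of the intervals $(t_1,t]$, $(t,t_2]$ contains no grid point; the corresponding increment is almost surely $0$ and the product expectation vanishes. If $\lambda(m)(t_2-t_1)\ge 1$, then both floor differences are at most $\lambda(m)(t_2-t_1)+1\le 2\lambda(m)(t_2-t_1)$, and Proposition~\ref{MomEstX2DMdifD} together with independence of the two increments gives
\begin{equation*}
\EXm\big[\|\ve{Y}_t-\ve{Y}_{t_1}\|_{\max}^{r}\,\|\ve{Y}_{t_2}-\ve{Y}_t\|_{\max}^{r}\big]
\;\le\; K^{2}p^{-2rm}\big(2\lambda(m)(t_2-t_1)\big)^{2r/b}
\;=\; K^{2}\,4^{r/b}D^{2r/b}\,(t_2-t_1)^{2r/b},
\end{equation*}
uniformly in $m$, since $\lambda(m)^{2r/b}=D^{2r/b}p^{2rm}$ cancels the factor $p^{-2rm}$. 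With $r$ in $(b/2,b)$ this is exactly Chentsov's hypothesis, and the rest of your argument (marginal bounds from Theorem~\ref{MomEstX2DMdifDQp}, Markov's inequality, compactness of balls in $\Qp^2$) goes through unchanged. So the flaw is local and repairable, but as written the proof routes the key estimate through a claim that is false near the diagonal.
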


\begin{proposition}\label{prop:qp:concenration2M}
The measure $\Pbm$ is concentrated on the subset of paths in $D([0, \infty)\colon \mathds Q_p^2)$ that are valued in $\Gamma_m^{(2)}(\G^2)$ and are, for each natural number $n$, constant on the intervals $\big[(n-1)\tau_m, n\tau_m\big)$.
\end{proposition}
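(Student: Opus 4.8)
The plan is to use that $\Pbm$ is, by construction, the law of the piecewise-constant interpolation of the discrete walk $\ve{S}^{(m)}$ pushed forward by the spatial embedding $\Gamma_m^{(2)}$; both asserted properties hold path-by-path for this interpolation, and I would transfer them to the Skorokhod-space version by matching finite-dimensional distributions and then invoking c\`adl\`ag regularity.

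First I would record the finite-dimensional description implied by the embedding. Since $\iota_m^{(2)}$ identifies the discrete step $n$ with the continuous time $n\tau_m$, where $\tau_m = 1/\lambda(m)$, relation \eqref{EmbeddedProcessFDD} gives, for any times $0 \le t_1 < \cdots < t_k$, that the joint law of $(\ve{Y}_{t_1},\dots,\ve{Y}_{t_k})$ under $\Pbm$ equals that of
\[
\bigl(\Gamma_m^{(2)}(\ve{S}^{(m)}_{\lfloor \lambda(m)t_1\rfloor}),\dots,\Gamma_m^{(2)}(\ve{S}^{(m)}_{\lfloor \lambda(m)t_k\rfloor})\bigr).
\]
Two consequences are immediate. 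For each fixed $t$, the variable $\ve{Y}_t$ lies in the countable set $\Gamma_m^{(2)}(\G^2)$ with probability one. And whenever $s$ and $t$ lie in a common interval $[(n-1)\tau_m, n\tau_m)$ one has $\lfloor\lambda(m)s\rfloor=\lfloor\lambda(m)t\rfloor = n-1$, so $\ve{Y}_s=\ve{Y}_t$ almost surely.

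Next I would upgrade these fixed-time statements to a single pathwise statement. Fix a countable dense set $\mathcal{T}\subset[0,\infty)$ containing every endpoint $n\tau_m$. Intersecting over the countably many pairs drawn from $\mathcal{T}$ and over $n$ yields a $\Pbm$-full event on which every $\ve{Y}_t$ with $t\in\mathcal{T}$ lies in $\Gamma_m^{(2)}(\G^2)$ and on which $\ve{Y}$ is constant along $\mathcal{T}$ within each interval $[(n-1)\tau_m,n\tau_m)$. On this event, right-continuity of the c\`adl\`ag paths guaranteed by Proposition~\ref{tight:2DM} extends the constancy from the dense skeleton to every real time in each half-open interval, and the same right-continuity forces $\ve{Y}_t\in\Gamma_m^{(2)}(\G^2)$ for every $t$, since the value at $t$ agrees with the value at the left endpoint of its interval.

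The main obstacle is the order-of-quantifiers passage from ``for each fixed $t$ the property holds almost surely'' to ``almost surely the property holds for all $t$.'' Constancy on an interval is a constraint over uncountably many times and is not directly cylinder-measurable; the resolution is to phrase it through the countable skeleton $\mathcal{T}$ and then fill in the remaining times using the right-continuity built into the $J_1$ topology. This is precisely the step where the c\`adl\`ag structure is indispensable, and it is the only nonroutine ingredient, since the two-component bookkeeping is otherwise identical to the one-component argument behind Proposition~\ref{prop:qp:concenration}.
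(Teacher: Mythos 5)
Your proof is correct and follows essentially the same route as the paper's: the paper itself omits the argument, deferring to the specialization of \cite[Propositions~5.2 and 5.3]{W:Expo:24} (with the one-dimensional analogue deferred to \cite[Theorem~4.7]{WJPA}), and that argument proceeds exactly as you do---matching finite-dimensional distributions through the embedding, intersecting full-measure events over a countable skeleton containing the points $n\tau_m$, and invoking right-continuity of c\`adl\`ag paths to upgrade the skeleton statement to all times. The only blemish is notational: $\Gamma_m^{(2)}$ is defined on $\G^2$, so the embedded variable should be written $\Gamma_m^{(2)}\big(\ve{S}_{\lfloor\lambda(m)t\rfloor}\big)$ (equivalently, $\gamma_m\times\gamma_m$ applied to $\ve{S}^{(m)}_{\lfloor\lambda(m)t\rfloor}$), not $\Gamma_m^{(2)}$ applied to $\ve{S}^{(m)}$.
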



\subsection{The limiting processes}\label{subsec:LimitProcess2DMax}


The isotropic Vladimirov operator on $L^2(\Qp^2)$ is given by
\[
\Delta_{b} f = \mathcal F^{-1}\big(\|\cdot\|_{\max}\widehat f\;\big),
\]
where $\widehat f$ is the Fourier transform of $f$. The isotropic diffusion equation in $\Qp^2$ is the equation \eqref{Intro:HeatEquation}, with $\Delta_b$ as the isotropic Vladimirov operator, and $x$ replaced by $\ve{x}$. 
With the domain \[\mathcal D(\Delta_{b}) = \Big\{f\in L^2(\Qp^2)\colon \|\cdot\|^b_{\max}\widehat f\in L^2(\Qp^2)\Big\},\] the operator $\Delta_b$ is self-adjoint.  Extend both $\Delta_{b}$ and the Fourier transform as in the one dimensional case to act on functions of a (non-negative) time variable and spatial variable.

\begin{lemma}\label{lemma:continuousEstimateMoment}
For any real number $r$ in $(0, b)$ and and positive real number $a$,
\[\sum_{k\in\ZZ} \Big(\e^{-ap^{kb}}-\e^{-ap^{(k+1)b}}\Big) p^{-kr} \leq p^{r}a^{\frac{r}{b}}\Gamma\Big(1-\frac{r}{b}\Big) \]
\end{lemma}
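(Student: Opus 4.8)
The plan is to recognize the left-hand side as a Riemann-type sum that is dominated by a single integral evaluating in closed form to a Gamma function, which produces the stated bound together with the explicit constant $p^r$. To set this up, I would introduce the smooth, strictly decreasing function $G\colon\RR\to(0,1)$ defined by $G(u)=\e^{-ap^{ub}}$, which satisfies $G(u)\to 1$ as $u\to-\infty$ and $G(u)\to 0$ as $u\to\infty$. Each summand factor is then an increment of $G$,
\[
\e^{-ap^{kb}}-\e^{-ap^{(k+1)b}}=G(k)-G(k+1)=\int_k^{k+1}\big(-G'(u)\big)\,\d u,
\]
and a direct differentiation gives $-G'(u)=ab(\ln p)\,p^{ub}\,\e^{-ap^{ub}}\ge 0$, so every term of the series is non-negative.

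First I would replace the factor $p^{-kr}$ by a bound uniform over the interval of integration. For $u$ in $[k,k+1]$ one has $u\le k+1$, and since $r>0$ and $p>1$ this gives $p^{-kr}=p^r\,p^{-(k+1)r}\le p^r\,p^{-ur}$. Substituting this into each integral and summing, with Tonelli justifying the interchange of sum and integral because all integrands are non-negative, yields
\[
\sum_{k\in\ZZ}\big(G(k)-G(k+1)\big)p^{-kr}\;\le\;p^r\sum_{k\in\ZZ}\int_k^{k+1}p^{-ur}\big(-G'(u)\big)\,\d u\;=\;p^r\int_{-\infty}^{\infty}p^{-ur}\big(-G'(u)\big)\,\d u.
\]

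Next I would evaluate this single integral by the substitution $t=ap^{ub}$, under which $\d t=b(\ln p)\,t\,\d u$ and $p^{-ur}=(t/a)^{-r/b}$, while $u$ ranging over $\RR$ corresponds bijectively to $t$ ranging over $(0,\infty)$. The factor $-G'(u)\,\d u$ collapses to $\e^{-t}\,\d t$, so the integral becomes
\[
\int_{-\infty}^{\infty}p^{-ur}\big(-G'(u)\big)\,\d u=a^{\frac{r}{b}}\int_0^\infty t^{-\frac{r}{b}}\e^{-t}\,\d t=a^{\frac{r}{b}}\,\Gamma\Big(1-\tfrac{r}{b}\Big),
\]
where convergence of the Gamma integral is precisely the hypothesis $r<b$, which forces the exponent $-r/b$ to exceed $-1$. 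Combining this with the preceding display gives the claimed inequality.

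The argument is essentially mechanical, so I do not expect a serious obstacle; the only points meriting care are the direction of the elementary inequality $p^{-kr}\le p^r p^{-ur}$ on each unit interval and the verification that $t=ap^{ub}$ is a genuine bijection of $\RR$ onto $(0,\infty)$. The hypothesis $r\in(0,b)$ enters solely through the convergence of $\Gamma(1-r/b)$, so beyond observing that the terms are non-negative and bounded above by a convergent integral, no separate convergence analysis of the original series is required.
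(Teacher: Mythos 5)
Your proof is correct and takes essentially the same approach as the paper: both express each difference $\e^{-ap^{kb}}-\e^{-ap^{(k+1)b}}$ as an integral of the exponential density, pay the factor $p^r$ to dominate the discrete weight $p^{-kr}$ by the decreasing continuous weight over each cell, and sum the resulting integrals to recognize $\Gamma\big(1-\tfrac{r}{b}\big)$. The only difference is cosmetic: you integrate in the exponent variable $u$ and substitute $t=ap^{ub}$ at the end, while the paper works directly in the radius variable $y=p^{ub}$ and substitutes $u=ay$.
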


\begin{proof}
For any real numbers $y_k$ and $y_{k+1}$ where $y_{k+1}$ is larger than $y_k$, \begin{equation}\label{expdiff-to-int:A}\e^{-ay_k}-\e^{-ay_{k+1}} = \int_{y_{k}}^{y_{k+1}} a\e^{-ay}\,\d y.\end{equation} For any non-negative integrable function $f$ that is defined and decreasing on $[y_k, y_{k+1}]$, \begin{equation}\label{expdiff-to-int:B}\Big(\e^{-ay_k}-\e^{-ay_{k+1}}\Big)f(y_{k+1}) \leq \int_{y_{k}}^{y_{k+1}} a\e^{-ay}f(y)\,\d y.\end{equation}

For each $k$ in $\ZZ$ and each $y$ in $(0,\infty)$, take \[y_k = p^{kb} \quad \text{and} \quad f(y) = y^{-\frac{r}{b}}\] to obtain the inequality \begin{equation}\label{expdiff-to-int:C}\Big(\e^{-ap^{bk}}-\e^{-ap^{b(k+1)}}\Big)p^{-r(k+1)} \leq \int_{p^{kb}}^{p^{(k+1)b}} a\e^{-ay}y^{-\frac{r}{b}}\,\d y,\end{equation} hence %
\begin{align}\label{expdiff-to-int:C}
\Big(\e^{-ap^{bk}}-\e^{-ap^{b(k+1)}}\Big)p^{-rk}\leq p^{r}\int_{p^{kb}}^{p^{(k+1)b}} a\e^{-ay}y^{-\frac{r}{b}}\,\d y%
 = p^{r}a^{\frac{r}{b}}\int_{ap^{kb}}^{ap^{(k+1)b}} \e^{-u}u^{-\frac{r}{b}}\,\d u.
\end{align} %
Use \eqref{EXY:cont2D} and \eqref{expdiff-to-int:C} to obtain

\begin{align}\label{EXY:cont2D:final}
\sum_{k\in\ZZ} \Big(\e^{-ap^{kb}}-\e^{-ap^{(k+1)b}}\Big) p^{-kr}\leq p^ra^{\frac{r}{b}}\sum_{k\in\ZZ}\int_{ap^{kb}}^{ap^{(k+1)b}} \e^{-u}u^{-\frac{r}{b}}\,\d u%
= p^ra^{\frac{r}{b}}\Gamma\Big(1-\frac{r}{b}\Big).
\end{align}

\end{proof}

The heat kernel
\[
\rho(t,\ve{x})=\big(\mathcal F^{-1} \e^{-\sigma t\|\cdot\|_{\max}^{b}}\big)(\ve{x})
\]
is the fundamental solution to the isotropic diffusion equation. 

\begin{proposition}\label{Prop:Density:2dM}
For each positive $t$, \[\rho(t,\ve{x}) = \sum_{k\in\ZZ} p^{2k}\Big(\e^{-\sigma t\alpha_{\max}p^{kb}}-\e^{-\sigma t\alpha_{\max}p^{(k+1)b}}\Big)\indicator{\Ballt(-2k)}(\ve{x}).\] Furthermore, the family $\{\rho(t,\cdot)\}_{t>0}$ is convolution semigroup of probability density functions on $\Qp$ that for any $n$ in $[1,\infty)$ are $L^n(\Qp^2)$.
\end{proposition}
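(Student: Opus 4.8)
The plan is to obtain the closed form by a direct inverse-Fourier computation and then read off the analytic properties from it. The heat kernel is the inverse Fourier transform of the radial symbol $\ve y\mapsto\e^{-\sigma t\alpha_{\max}\|\ve y\|_{\max}^{b}}$ (the decay constant $\sigma\alpha_{\max}$ being the diffusion constant identified from the limiting characteristic function, exactly as $\alpha(P_0)$ arose in the one-component case). First I would expand this symbol over coarse shells: since $\|\ve y\|_{\max}=p^{k}$ precisely on $\Shelltc(2k)$, one has $\e^{-\sigma t\alpha_{\max}\|\ve y\|_{\max}^{b}}=\sum_{k\in\ZZ}\e^{-\sigma t\alpha_{\max}p^{kb}}\indicator{\Shelltc(2k)}(\ve y)$. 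Substituting into $\rho(t,\ve x)=\int_{\Qp^2}\langle\ve x,\ve y\rangle\,\e^{-\sigma t\alpha_{\max}\|\ve y\|_{\max}^{b}}\,\d\ve y$ and interchanging sum and integral — legitimate because the symbol is absolutely integrable, so dominated convergence applies — reduces the problem to integrating characters over balls.

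Next I would write $\indicator{\Shelltc(2k)}=\indicator{\Ballt(2k)}-\indicator{\Ballt(2k-2)}$ and apply the two-dimensional instance of Lemma~\ref{lem:LCAG:Int}, which gives $\int_{\Ballt(2k)}\langle\ve x,\ve y\rangle\,\d\ve y=\Vol\big(\Ballt(2k)\big)\,\indicator{\Ballt(2k)^{\perp}}(\ve x)=p^{2k}\indicator{\Ballt(-2k)}(\ve x)$, using $\Vol(\Ballt(2k))=p^{2k}$ from \eqref{eq:CoarseShellVolume} and $\Ballt(2k)^{\perp}=\Ballt(-2k)$ from \eqref{Eq:DualBallnoe:identification:qp2}. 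Feeding this back produces two series; reindexing the one coming from $\Ballt(2k-2)$ by the shift $k\mapsto k+1$ lines it up against the first, and collecting the coefficient of each $\indicator{\Ballt(-2k)}(\ve x)$ telescopes the exponentials into $p^{2k}\big(\e^{-\sigma t\alpha_{\max}p^{kb}}-\e^{-\sigma t\alpha_{\max}p^{(k+1)b}}\big)$, which is exactly the asserted formula.

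The remaining assertions follow from the closed form. Each coefficient is positive because $p^{kb}<p^{(k+1)b}$, so $\rho(t,\cdot)\ge0$; integrating term by term against $\Vol(\Ballt(-2k))=p^{-2k}$ gives $\int_{\Qp^2}\rho(t,\ve x)\,\d\ve x=\sum_{k\in\ZZ}\big(\e^{-\sigma t\alpha_{\max}p^{kb}}-\e^{-\sigma t\alpha_{\max}p^{(k+1)b}}\big)=1$ by telescoping, so $\rho(t,\cdot)$ is a probability density in $L^1(\Qp^2)$. The convolution-semigroup property is a one-line Fourier computation: $\invFourier$ carries the pointwise product $\e^{-\sigma s\alpha_{\max}\|\cdot\|^{b}}\e^{-\sigma t\alpha_{\max}\|\cdot\|^{b}}=\e^{-\sigma(s+t)\alpha_{\max}\|\cdot\|^{b}}$ to the convolution $\rho(s,\cdot)\ast\rho(t,\cdot)=\rho(s+t,\cdot)$. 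For the $L^n$ statement I would note that for $\ve x\neq\ve 0$ the indicators cut the series off at some $k\le k_{0}(\ve x)$, so $\rho(t,\ve x)$ is a partial sum of the convergent positive series whose total is $\rho(t,\ve 0)$; its $k\to-\infty$ tail behaves like $p^{k(b+2)}$ and its $k\to+\infty$ tail decays superexponentially, so $\rho(t,\ve 0)<\infty$ and $\rho(t,\cdot)\in L^{\infty}(\Qp^2)$. Then $\|\rho(t,\cdot)\|_{n}^{n}\le\|\rho(t,\cdot)\|_{\infty}^{\,n-1}\|\rho(t,\cdot)\|_{1}<\infty$ for every $n$ in $[1,\infty)$.

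The one step that needs care is not analytic but combinatorial: the reindexing in the telescoping must be tracked so that the coarse filtration's index factor $p^{2}$ surfaces correctly as the $p^{2k}$ weights and the factor-of-two volume parameter $2k$ on the balls $\Ballt(2k)$. Every convergence issue — the interchange of sum and integral, the term-by-term integration, and the boundedness estimate — is made routine by the superexponential decay of the symbol, so I expect no genuine analytic obstacle beyond this bookkeeping.
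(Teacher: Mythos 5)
Your derivation of the closed form is the same as the paper's: both compute $\invFourier$ of the radial symbol by integrating over coarse shells, convert shell integrals to ball integrals via the character-integration lemma (Lemma~\ref{lem:LCAG:Int} combined with $\Ballt(2k)^\perp=\Ballt(-2k)$ and $\Vol(\Ballt(2k))=p^{2k}$), and telescope after reindexing; the paper merely compresses these steps into a single chain of equalities. Where you genuinely diverge is in the ``furthermore'' claims. For unit mass the paper observes that the inverse Fourier transform of a symbol equal to $1$ at the origin has total integral $1$, while you integrate the series term by term and telescope $\sum_{k\in\ZZ}\big(\e^{-\sigma t\alpha_{\max}p^{kb}}-\e^{-\sigma t\alpha_{\max}p^{(k+1)b}}\big)=1$; both are correct. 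For the $L^n$ membership the paper cites Lemma~\ref{lemma:continuousEstimateMoment}, whose relevance (moment sums weighted by $p^{-kr}$ for $r\in(0,b)$) takes some unwinding, whereas you bound $\|\rho(t,\cdot)\|_\infty=\rho(t,\ve{0})$ directly --- using that for $\ve{x}\neq\ve{0}$ the series is a sub-sum of the positive convergent series at $\ve{0}$, whose $k\to-\infty$ tail is $O(p^{k(b+2)})$ and whose $k\to+\infty$ tail decays superexponentially --- and then interpolate $\|\rho(t,\cdot)\|_n^n\le\|\rho(t,\cdot)\|_\infty^{n-1}\|\rho(t,\cdot)\|_1$. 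Your route to $L^n$ is more self-contained and arguably cleaner; the paper's route reuses machinery it needs anyway for the moment estimate in Proposition~\ref{Prop:EXY:cont2D}. Both arguments are sound, and your explicit justification of the sum--integral interchange (absolute integrability of the symbol) fills in a step the paper leaves tacit.
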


\begin{proof}
For each positive $t$, compute the inverse of Fourier transform by integrating over circles to obtain the equalities
\begin{align}\label{vsvAdapted2Dh}
\rho(t,\ve{x}) &= \big(\mathcal F^{-1} \e^{-\sigma t\alpha_{\max}\|\cdot\|_{\max}^{b}}\big)(\ve{x})\notag\\
&= \int_{\Qp^2}\langle \ve{x}, \ve{y}\rangle \e^{-\sigma t\alpha_{\max}\|\ve{y}\|_{\max}^{b}}\,{\rm d}\ve{y}\notag\\
&= \sum_{k\in\ZZ} \e^{-\sigma t\alpha_{\max}p^{kb}}\int_{\Shelltc(2k)}\langle \ve{x}, \ve{y}\rangle\,{\rm d}\ve{y}\notag\\
&= \sum_{k\in\ZZ} p^{2k}\Big(\e^{-\sigma t\alpha_{\max}p^{kb}}-\e^{-\sigma t\alpha_{\max}p^{(k+1)b}}\Big)\indicator{\Ballt(-2k)}(\ve{x}).
\end{align}%

Lemma~\ref{lemma:continuousEstimateMoment} implies that this sum is convergent for any $x$, and in fact, that $\rho(t,\cdot)$ is $L^n(\Qp)$ for every $n$ in $[1,\infty)$.  The given family of functions forms a convolution semigroup because the Fourier transform takes products to convolutions.  Positivity of each term in the sum that defines the functions in the family imply that the functions are positive, and since the Fourier transforms of these functions at $\ve{0}$ are equal to $1$, they each have unit mass.
\end{proof}

The family $\{\rho(t,\cdot)\}_{t>0}$ determines a probability measure $\Pb$ on $F([0,\infty)\colon\Qp^2)$, concentrated on paths that are initially at $\ve{0}$. To show that the process $\big(F([0,\infty)\colon\Qp^2), \Pbm, Y_t\big)$ has a version with paths in $D([0,\infty)\colon\Qp^2)$, it suffices to establish Proposition~\ref{Prop:EXY:cont2D} and then follow Varadarajan \cite[Theorem 4]{Varadarajan:LMP:1997}.

\begin{proposition}\label{Prop:EXY:cont2D}
For any $r$ in $(0,b)$ and positive real number $t$, there is a constant $K$ that is independent of $t$ so that \[\EX[\|\ve{Y}_t\|_{\max}^r] \leq K t^{\frac{r}{b}}.\]
\end{proposition}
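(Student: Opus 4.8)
The plan is to evaluate the moment directly against the explicit density furnished by Proposition~\ref{Prop:Density:2dM}. Since $\Pb$ is the path measure built from the convolution semigroup $\{\rho(t,\cdot)\}_{t>0}$ and is concentrated on paths started at $\ve{0}$, the random variable $\ve{Y}_t$ has law $\rho(t,\cdot)\,\d\ve{x}$, so that $\EX[\|\ve{Y}_t\|_{\max}^r]=\int_{\Qp^2}\|\ve{x}\|_{\max}^r\rho(t,\ve{x})\,\d\ve{x}$. Writing $a=\sigma t\alpha_{\max}$ and using the ball representation
\[
\rho(t,\ve{x}) = \sum_{k\in\ZZ} p^{2k}\big(\e^{-ap^{kb}}-\e^{-ap^{(k+1)b}}\big)\indicator{\Ballt(-2k)}(\ve{x}),
\]
I would first note that every summand is nonnegative, so Tonelli's theorem justifies interchanging the sum with the integral. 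This reduces the whole problem to computing, for each $k$, the single-ball integral $\int_{\Ballt(-2k)}\|\ve{x}\|_{\max}^r\,\d\ve{x}$.

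The key computation is this per-ball integral. The ball $\Ballt(-2k)$ has radius $p^{-k}$ and decomposes, up to the null origin, into coarse shells $\Shelltc(-2j)$ for $j\ge k$, on each of which $\|\ve{x}\|_{\max}^r=p^{-jr}$ is constant while $\Vol(\Shelltc(-2j))=(1-p^{-2})p^{-2j}$ by \eqref{eq:CoarseShellVolume}. Summing the resulting geometric series in $j$ gives
\[
\int_{\Ballt(-2k)}\|\ve{x}\|_{\max}^r\,\d\ve{x} = (1-p^{-2})\sum_{j\ge k}p^{-j(r+2)} = C(r,d)\,p^{-k(r+2)},
\]
where the prefactor $(1-p^{-2})/(1-p^{-(r+2)})$ is recognized, after clearing denominators, as exactly the constant $C(r,d)=\frac{p^{r+2}-p^{r}}{p^{r+2}-1}$.

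Substituting this back and simplifying via $p^{2k}\cdot p^{-k(r+2)}=p^{-kr}$ collapses the double bookkeeping into the clean one-parameter sum
\[
\EX[\|\ve{Y}_t\|_{\max}^r] = C(r,d)\sum_{k\in\ZZ}\big(\e^{-ap^{kb}}-\e^{-ap^{(k+1)b}}\big)p^{-kr},
\]
which is precisely the object controlled by Lemma~\ref{lemma:continuousEstimateMoment}. Applying that lemma with $a=\sigma t\alpha_{\max}$ bounds the sum by $p^{r}a^{r/b}\Gamma(1-r/b)$, and since $a^{r/b}=(\sigma\alpha_{\max})^{r/b}t^{r/b}$, the claim follows with $K=C(r,d)\,p^{r}(\sigma\alpha_{\max})^{r/b}\Gamma(1-r/b)$, visibly independent of $t$. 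The only place where genuine care is required is the per-ball shell decomposition and matching its geometric-series value to $C(r,d)$; everything else is routine, with the hypothesis $r\in(0,b)$ entering exactly where Lemma~\ref{lemma:continuousEstimateMoment} needs $\Gamma(1-r/b)$ finite, and nonnegativity legitimizing the interchange of sum and integral. The argument parallels the one-dimensional computation behind Proposition~\ref{ExpV1DdifD} and Theorem~\ref{MomEstX1DdifDQp}.
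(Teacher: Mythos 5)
Your proposal is correct and follows essentially the same route as the paper: expand $\EX[\|\ve{Y}_t\|_{\max}^r]$ against the explicit density of Proposition~\ref{Prop:Density:2dM}, interchange sum and integral, reduce to the sum $\sum_{k\in\ZZ}\big(\e^{-ap^{kb}}-\e^{-ap^{(k+1)b}}\big)p^{-kr}$ with $a=\sigma t\alpha_{\max}$, and close with Lemma~\ref{lemma:continuousEstimateMoment}. The only difference is cosmetic: you evaluate the per-ball integral exactly via the coarse-shell decomposition, producing the extra factor $C(r,d)<1$, whereas the paper simply bounds $\int_{\Ballt(-2k)}\|\ve{x}\|_{\max}^r\,\d\ve{x}$ by $p^{-kr}\Vol(\Ballt(-2k))$; both yield the same $t^{r/b}$ bound.
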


\begin{proof}
Proposition~\ref{Prop:Density:2dM} implies that %
\begin{align}\label{EXY:cont2D}
\EX[\|\ve{Y}_t\|_{\max}^r] & =  \int_{\Qp^2} \|\ve{x}\|_{\max}^r\rho(t,\ve{x})\,{\rm d}\ve{x}\notag\\
& =  \sum_{k\in\ZZ} p^{2k}\Big(\e^{-\sigma t\alpha_{\max}p^{kb}}-\e^{-\sigma t\alpha_{\max}p^{(k+1)b}}\Big)\int_{\Qp^2} \|\ve{x}\|_{\max}^r\indicator{\Ballt(-2k)}(\ve{x})\,{\rm d}\ve{x}\notag\\
& \leq  \sum_{k\in\ZZ} \Big(\e^{-\sigma t\alpha_{\max}p^{kb}}-\e^{-\sigma t\alpha_{\max}p^{(k+1)b}}\Big) p^{-kr}.
\end{align}
Take $a$ to be $\sigma t\alpha_0$ and use Lemma~\ref{lemma:continuousEstimateMoment} to obtain the inequality.

\end{proof}

The resulting process $(D([0,\infty)\colon\Qp),\Pb,\ve{Y})$ is a Brownian motion in $\Qp^2$.  The earlier work \cite{RW:JFAA:2023} of the authors examined (as a special case) Brownian motion in this two dimensional setting.  The component processes of this process are dependent for all time, but they are diffusion processes with the same diffusion constant and Vladimirov exponent $\sigma$.

\subsection{Convergence of the processes}

For each $t$ in $(0, \infty)$, define the sequence $(t_m)$ as before in the one dimensional setting, and determine the characteristic function $\phi_{\ve{X}^{(m)}}$ for $\ve{X}^{(m)}$ as before.  The proofs of Proposition~\ref{prop:2-dMChar} and its corollary are identical to those in the perturbed case to be presented in the next section, with the simplification that  the constant $\alpha_{\max}$ replaces the term $\alpha(\|\y\|)$ that appears later.

\begin{proposition}\label{prop:2-dMChar}
For any $\y$ in $p^{-m}\Zp^2$, the characteristic function $\phi_{\ve{X}^{(m)}}$ for $\ve{X}^{(m)}$ is given by the equality 
\[\phi_{\ve{X}^{(m)}}(\y) = 1 - \frac{\alpha_{\max}\|\y\|_{\max}^b}{p^{mb}}.\]
\end{proposition}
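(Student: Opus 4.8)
The plan is to mirror the one-dimensional computation in Proposition~\ref{prop:1-dChar}, pushing the Fourier integral that defines $\phi_{\ve{X}^{(m)}}$ off the quotient $(\Gm)^2$ and back onto $\G^2$, where the closed form $\phi_{\max}(\ve{y}) = 1 - \alpha_{\max}\|\ve{y}\|_{\max}^b$ established earlier in this section applies directly. The only inputs beyond that formula are the scaling behavior of the density under $Q_m^{(2)}$ recorded in \eqref{eq:newdensitydiscrete2d} and the two-dimensional lifting Lemma~\ref{StateSpaces:Intmoverballs2}; indeed, the proposition is stated to have a proof identical to the perturbed case of the next section with $\alpha_{\max}$ in place of $\alpha(\|\ve{y}\|)$, so the present argument is the template.

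Concretely, I would first write $\phi_{\ve{X}^{(m)}}(\ve{y}) = \int_{(\Gm)^2}\langle -[\ve{x}]_m,\ve{y}\rangle_m\,\rho_{\ve{X}^{(m)}}([\ve{x}]_m)\,\d[\ve{x}]_m$ and use Lemma~\ref{StateSpaces:Intmoverballs2} to rewrite this as an integral over $\Qp^2$ against $\d\ve{x}$. Next I would substitute the density identity \eqref{eq:newdensitydiscrete2d}, which replaces $\rho_{\ve{X}^{(m)}}([\ve{x}]_m)$ by $p^{2m}\rho_{\ve{X}}([p^{-m}\ve{x}])$, and then change variables $\ve{x}\mapsto p^m\ve{x}$. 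The measure $\d(p^m\ve{x}) = p^{-2m}\,\d\ve{x}$ cancels the density prefactor $p^{2m}$, and the pairing collapses by the adjointness identity $\langle [p^m\ve{x}]_m,\ve{y}\rangle_m = \langle [\ve{x}],p^m\ve{y}\rangle$, immediate from \eqref{eq:pairing-2d} since $\chi(p^m x_i y_i) = \chi(x_i\,p^m y_i)$. The integral that remains is exactly $\phi_{\max}(p^m\ve{y})$.

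The final step is to evaluate $\phi_{\max}$ at $p^m\ve{y}$. Because $\ve{y}$ lies in $p^{-m}\Zp^2$, the shifted argument $p^m\ve{y}$ lies in $\Zp^2$, so the closed form for $\phi_{\max}$ is valid and yields $1 - \alpha_{\max}\|p^m\ve{y}\|_{\max}^b$. This is the one place where I would watch the bookkeeping: multiplication by $p^m$ \emph{shrinks} the $p$-adic norm, $\|p^m\ve{y}\|_{\max} = p^{-m}\|\ve{y}\|_{\max}$, so $\|p^m\ve{y}\|_{\max}^b = p^{-mb}\|\ve{y}\|_{\max}^b$, which is precisely what produces the $1/p^{mb}$ in the statement. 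I do not expect a genuine obstacle here; the only subtleties are the sign of that exponent and the observation that $p^m\ve{y}\in\Zp^2$ keeps us in the regime where the $\phi_{\max}$ formula holds, and the argument is otherwise a verbatim two-dimensional transcription of the one-dimensional proof.
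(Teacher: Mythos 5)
Your proposal is correct and is essentially identical to the paper's own argument (which the paper records in the anisotropic case of Section~\ref{Sec:Perturbed} and invokes here with $\alpha_{\max}$ in place of $\alpha(\y)$): lift the integral from $(\Gm)^2$ to $\Qp^2$ via Lemma~\ref{StateSpaces:Intmoverballs2}, substitute the density scaling \eqref{eq:newdensitydiscrete2d}, change variables $\ve{x}\mapsto p^m\ve{x}$ so the Jacobian $p^{-2m}$ cancels the prefactor $p^{2m}$, collapse the pairing to $\langle[\ve{x}],p^m\y\rangle$ using \eqref{eq:pairing-2d}, and evaluate $\phi_{\max}(p^m\y)=1-\alpha_{\max}p^{-mb}\|\y\|_{\max}^b$. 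Your bookkeeping on the norm scaling $\|p^m\y\|_{\max}=p^{-m}\|\y\|_{\max}$ and on $p^m\y\in\Zp^2$ is exactly the point where the $p^{-mb}$ arises, matching the paper.
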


\begin{corollary}
For each natural number $n$, 
\begin{equation*}
\phi^{(m)}_n(\y) = \left(1 - \frac{\alpha_{\max}\|\y\|_{\max}^b}{p^{mb}}\right)^n.
\end{equation*}
\end{corollary}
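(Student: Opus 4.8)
The plan is to use only independence of the summands together with the multiplicativity of characteristic functions under convolution. By construction $\ve{S}^{(m)} = Q_m^{(2)}\circ \ve{S}$, and since $Q_m^{(2)}$ is a group homomorphism,
\[
\ve{S}_n^{(m)} = Q_m^{(2)}(\ve{X}_0) + \cdots + Q_m^{(2)}(\ve{X}_n) = \ve{X}_0^{(m)} + \cdots + \ve{X}_n^{(m)},
\]
where the variables $\ve{X}_i^{(m)} = Q_m^{(2)}\circ \ve{X}_i$ are independent, each distributed as $\ve{X}^{(m)}$, and $\ve{X}_0^{(m)}$ is almost surely the identity $[\ve{0}]_m$. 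First I would record that the law of $\ve{S}_n^{(m)}$ is therefore the $n$-fold convolution of $\rho_{\ve{X}^{(m)}}$ on $(\Gm)^2$, with the degenerate variable $\ve{X}_0^{(m)}$ contributing the Dirac mass at the identity, which is the unit for convolution.

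Next I would invoke that the Fourier transform on $(\Gm)^2$ carries convolution to pointwise multiplication, so that the characteristic function $\phi^{(m)}_n$ of $\ve{S}_n^{(m)}$ is the $n$-fold product of $\phi_{\ve{X}^{(m)}}$; the factor coming from $\ve{X}_0^{(m)}$ equals the constant $1$ and may be dropped. Substituting the closed form from Proposition~\ref{prop:2-dMChar}, valid for any $\y$ in $p^{-m}\Zp^2$, then yields
\[
\phi^{(m)}_n(\y) = \phi_{\ve{X}^{(m)}}(\y)^n = \left(1 - \frac{\alpha_{\max}\|\y\|_{\max}^b}{p^{mb}}\right)^n.
\]

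There is no substantive obstacle here: the statement is an immediate consequence of Proposition~\ref{prop:2-dMChar} together with the elementary fact that the characteristic function of a sum of independent $(\Gm)^2$-valued random variables factors as the product of the individual characteristic functions. The only point meriting a line of care is that $\phi^{(m)}_n$ is evaluated on the full dual group $p^{-m}\Zp^2$, which is exactly the domain on which Proposition~\ref{prop:2-dMChar} supplies the identity for $\phi_{\ve{X}^{(m)}}$, so no extension beyond that proposition is needed.
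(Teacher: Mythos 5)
Your proposal is correct and follows essentially the same route as the paper: the paper's own justification is the one-line observation that the Fourier transform takes $n$-fold convolutions to $n$-fold products, so $\phi^{(m)}_n = \big(\phi_{\ve{X}^{(m)}}\big)^n$, and then Proposition~\ref{prop:2-dMChar} supplies the closed form. Your additional care about the homomorphism property of $Q_m^{(2)}$, the Dirac mass from $\ve{X}_0^{(m)}$ acting as the convolution unit, and the domain $p^{-m}\Zp^2$ simply makes explicit what the paper leaves implicit.
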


Take $E_m$ to be the function that is defined for any $(n,\y)$ in $\mathds N_0\times \Qp$ by \[E_m(n, \y) = \begin{cases}  
\left(1 - \frac{\alpha_{\max}\|\y\|_{\max}^b}{p^{mb}}\right)^{n} &\mbox{if }\|\ve{y}\|_{\max} \leq p^{m}\\0 &\mbox{if }\|\ve{y}\|_{\max} > p^m.\end{cases}\]

Take the inverse Fourier transform of the characteristic function for $S^{(m)}_{n}$ and use \eqref{eq:pairing-2d} to obtain the equalities%
\begin{align*}
\rho_{n}^{(m)}(\x_m) &= \int_{p^{-m}\Zp^2} \langle \x_m, \y\rangle_m \left(1-\frac{\alpha_{\max}\|\y\|_{\max}^b}{p^{mb}}\right)^{n}\,\d \y\\
&= \int_{\Qp^2} \langle \ve{x}, \y\rangle E_m(n,\y)\,\d \y.
\end{align*}

Define the function $\rho_m(t, [\cdot]_m)$ for each $\ve{x}$ in $\Qp^2$ by \[\rho_m(t, [\cdot]_m) \colon \ve{x}\mapsto \rho_m\big(\lambda(m)t_m,\x_m\big).\] The proofs of Lemma~\ref{lem:unifcon2-dM} and Theorem~\ref{Sec:Con:Theorem:MAIN2M} similarly follow those from the anisotropic dimension two setting, to be presented in the next section, but already are essentially special cases of the earlier work \cite[Lemma~5.1, Proposition~5.4]{W:Expo:24}.

\begin{lemma}\label{lem:unifcon2-dM}
The sequence of functions $(\rho_m(t,[\cdot]_m))$ converges uniformly on $\Qp$ to the probability density function $\rho(t, \cdot)$.
\end{lemma}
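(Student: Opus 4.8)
The plan is to transcribe the one-dimensional argument of Lemma~\ref{lem:unifcon1-d:A} into the two-component setting, reducing uniform convergence to the vanishing of a single $\ve{x}$-independent integral. Both objects are inverse Fourier transforms against the unimodular kernel $\langle\ve{x},\ve{y}\rangle$: the proof of Proposition~\ref{Prop:Density:2dM} gives
\[
\rho(t,\ve{x}) = \int_{\Qp^2}\langle\ve{x},\ve{y}\rangle\,\e^{-\sigma t\alpha_{\max}\|\ve{y}\|_{\max}^{b}}\,\d\ve{y},
\]
while the displayed formula for $\rho^{(m)}_{n}$ gives $\rho_m(\lambda(m)t_m,\x_m)=\int_{\Qp^2}\langle\ve{x},\ve{y}\rangle\,E_m(\lambda(m)t_m,\ve{y})\,\d\ve{y}$. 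Subtracting and pulling the absolute value inside, the factor $|\langle\ve{x},\ve{y}\rangle|=1$ drops out, so
\[
\bigl|\rho(t,\ve{x})-\rho_m(\lambda(m)t_m,\x_m)\bigr|\;\le\;\int_{\Qp^2}\bigl|\e^{-\sigma t\alpha_{\max}\|\ve{y}\|_{\max}^{b}}-E_m(\lambda(m)t_m,\ve{y})\bigr|\,\d\ve{y}.
\]
The right-hand side is free of $\ve{x}$, so establishing that this one integral tends to $0$ yields uniform convergence on all of $\Qp^2$ simultaneously.

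Next I would verify that the integrand converges to $0$ pointwise. For each fixed $\ve{y}$, once $m$ is large enough that $\|\ve{y}\|_{\max}\le p^{m}$, the truncation in the definition of $E_m$ is inactive and $E_m(\lambda(m)t_m,\ve{y})=\bigl(1-\alpha_{\max}\|\ve{y}\|_{\max}^{b}\,p^{-mb}\bigr)^{\lambda(m)t_m}$; since $t_m\to t$ and $\lambda(m)=Dp^{mb}$, the standard limit $(1-c_m/N_m)^{N_m}\to\e^{-c}$ shows this converges to the Fourier symbol $\e^{-\sigma t\alpha_{\max}\|\ve{y}\|_{\max}^{b}}$ of $\rho(t,\cdot)$ for the value of $D$ calibrated exactly as in \eqref{DiffusionConst1-D}. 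Hence the integrand tends to $0$ for every $\ve{y}$.

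The main obstacle is producing an $m$-independent integrable majorant so that dominated convergence closes the argument. On the truncation region $\{\|\ve{y}\|_{\max}\le p^{m}\}$ the base satisfies $\alpha_{\max}\|\ve{y}\|_{\max}^{b}p^{-mb}\in[0,1)$, so the elementary inequality $1-u\le\e^{-u}$ and $\lambda(m)=Dp^{mb}$ give
\[
E_m(\lambda(m)t_m,\ve{y})\;\le\;\exp\!\bigl(-Dt_m\alpha_{\max}\|\ve{y}\|_{\max}^{b}\bigr)\;\le\;\exp\!\bigl(-\tfrac{1}{2}Dt\alpha_{\max}\|\ve{y}\|_{\max}^{b}\bigr),
\]
valid for all sufficiently large $m$ since $t_m\ge t/2$ eventually; off that region $E_m=0$. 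The whole integrand is therefore dominated by the fixed function $\e^{-\sigma t\alpha_{\max}\|\ve{y}\|_{\max}^{b}}+\e^{-\frac12 Dt\alpha_{\max}\|\ve{y}\|_{\max}^{b}}$, and both summands are integrable over $\Qp^2$ by the shell computation underlying Lemma~\ref{lemma:continuousEstimateMoment} (its $r=0$ specialization, giving the finite mass of a Gaussian-type density). Dominated convergence then forces the $\ve{x}$-free integral to $0$. I expect the only genuine care to lie in this majorization step, specifically in confirming that the base remains in $[0,1)$ on the truncation region and that the contribution outside it vanishes as an integrable tail, since everything else is a direct transcription of the one-dimensional estimate.
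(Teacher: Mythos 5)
Your reduction to the $\ve{x}$-independent integral is exactly the paper's own argument (the paper proves this lemma by deferring to the anisotropic analogue, Proposition~\ref{prop:unifcon1-d}, whose proof is precisely the $L^1$ bound on the difference of the two Fourier symbols), and since the paper leaves the final ``$\to 0$'' unjustified, your attempt to supply a dominated-convergence proof is the right kind of supplement. However, the step you yourself single out as the crux is false. Since
\[
\alpha_{\max}=\frac{p^{b+2}-1}{p^{b}(p^{2}-1)}=1+\frac{p^{b}-1}{p^{b}(p^{2}-1)}>1,
\]
on the outermost shell $\|\ve{y}\|_{\max}=p^{m}$ of the truncation region the base is $1-\alpha_{\max}\in(-1,0)$, not an element of $[0,1)$. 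There $E_m(\lambda(m)t_m,\ve{y})=(1-\alpha_{\max})^{\floor{\lambda(m)t}}$ alternates in sign, and your claimed domination $|E_m|\le\exp\big(-Dt_m\alpha_{\max}\|\ve{y}\|_{\max}^{b}\big)$ amounts to $|1-\alpha_{\max}|\le\e^{-\alpha_{\max}}$, which genuinely fails for some $(p,b)$: for $p=2$ and $b$ large, $\alpha_{\max}\to 4/3$, and $|1-\alpha_{\max}|\to\tfrac13>\e^{-4/3}\approx 0.264$. The offending shell has volume $(1-p^{-2})p^{2m}\to\infty$, so it cannot be ignored; your fixed majorant does not dominate the integrand there, and dominated convergence as you set it up does not close the argument.

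The gap is repairable by a two-region split, which is also how the paper handles the same issue elsewhere. On $\|\ve{y}\|_{\max}\le p^{m-1}$ one has $\alpha_{\max}\|\ve{y}\|_{\max}^{b}p^{-mb}\le\alpha_{\max}p^{-b}<1$, because
\[
p^{2b}(p^{2}-1)-(p^{b+2}-1)=(p^{b}-1)\big(p^{b}(p^{2}-1)-1\big)>0
\]
(this is the hypothesis $\alpha/p^{b}<1$ of Lemma~\ref{maxnorm:prim-mom-est-lemma}); on that region the base lies in $(0,1)$ and your majorization and dominated-convergence argument go through verbatim. The outermost shell is then estimated by hand: its contribution to the integral is at most
\[
(1-p^{-2})\,p^{2m}\Big(\e^{-\sigma t\alpha_{\max}p^{mb}}+|1-\alpha_{\max}|^{\floor{\lambda(m)t}}\Big)\longrightarrow 0,
\]
since $\alpha_{\max}<2$ gives $|1-\alpha_{\max}|<1$ while the exponent $\floor{Dp^{mb}t}$ grows like $p^{mb}$ and overwhelms the factor $p^{2m}$. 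This split-off of the first term is the same device the paper uses in its one-dimensional moment estimate, where $\varepsilon=\max\big(1-\tfrac{\alpha}{p^{b}},|1-\alpha|\big)$ is introduced precisely because $\alpha>1$; with it, your proof becomes correct and coincides with the paper's intended argument.
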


\begin{theorem}\label{Sec:Con:Theorem:MAIN2M}
The sequence of measures $(\Pbm)$ converges weakly to $\Pb$ in $D([0, \infty)\colon \Qp^2)$.
\end{theorem}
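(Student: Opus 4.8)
The plan is to deduce weak convergence from the combination of uniform tightness and convergence of the finite-dimensional distributions, exactly as in the one-dimensional argument culminating in Theorem~\ref{Sec:Con:Theorem:MAIN}. Proposition~\ref{tight:2DM} already furnishes uniform tightness of the family $(\Pbm)$, so by the standard relative-compactness criterion on the Skorokhod space every subsequence of $(\Pbm)$ has a further subsequence converging weakly to some probability measure. It therefore suffices to show that the finite-dimensional distributions of $\Pbm$ converge to those of $\Pb$; this pins down the limit uniquely and upgrades subsequential convergence to convergence of the full sequence.

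For the finite-dimensional distributions, I would restrict attention to the $\pi$-system of simple cylinder sets arising from restricted histories, whose routes are finite sequences of balls, since this family generates the cylinder $\sigma$-algebra. Fix such a history $h$ with epoch $(t_0,\dots,t_k)$ and route $(\{[\ve{0}]\},U_1,\dots,U_k)$, using that $\Pbm$ is concentrated on paths started at the origin (Proposition~\ref{prop:qp:concenration2M}). Writing $\Pbm(\C(h))$ through the product formula \eqref{eq:PremeasureFromAbstract} expresses it as an iterated integral over $U_1,\dots,U_k$ of a product of the discrete step densities $\rho_m\big(t_i-t_{i-1},[\ve{x}_i-\ve{x}_{i-1}]_m\big)$. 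Once $p^{-m}$ is smaller than the least radius among $U_1,\dots,U_k$, Lemma~\ref{StateSpaces:Intmoverballs2} rewrites each discrete integral over $(\Gm)^2$ as an integral over the corresponding ball in $\Qp^2$, and Lemma~\ref{lem:unifcon2-dM} provides uniform convergence of $\rho_m(t,[\cdot]_m)$ to the heat kernel $\rho(t,\cdot)$. Because the integration domains are fixed compact balls and the integrand is a finite product of uniformly bounded, uniformly convergent factors, passing $m\to\infty$ under the integral sign yields the integral of $\prod_i \rho(t_i-t_{i-1},\ve{x}_i-\ve{x}_{i-1})$ over $U_1,\dots,U_k$, which is precisely $\Pb(\C(h))$ by the convolution-semigroup construction of Proposition~\ref{Prop:Density:2dM}.

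The step I expect to require the most care is justifying the interchange of limit and iterated integral: one must confirm that the uniform convergence of Lemma~\ref{lem:unifcon2-dM} controls the finite product uniformly over the compact integration region, and that the max-norm volume parameterization of the two-dimensional balls $\Ballt(2k)$ is matched correctly to the discrete model so that Lemma~\ref{StateSpaces:Intmoverballs2} applies for all large $m$. Beyond this bookkeeping, no genuinely new difficulty arises; each step transcribes the corresponding one-dimensional computation, replacing the $\Qp$-balls $\Ballo(k)$ by the centered max-norm balls of $\Qp^2$. Convergence of the finite-dimensional distributions on the generating $\pi$-system, together with uniform tightness, then gives the asserted weak convergence of $(\Pbm)$ to $\Pb$.
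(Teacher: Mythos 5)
Your proposal is correct and follows essentially the same route as the paper, which proves this theorem by repeating the one-dimensional argument of Theorem~\ref{Sec:Con:Theorem:MAIN}: uniform tightness from Proposition~\ref{tight:2DM}, convergence of finite-dimensional distributions on the $\pi$-system of restricted (ball-route) histories via Lemma~\ref{StateSpaces:Intmoverballs2} and the uniform convergence of Lemma~\ref{lem:unifcon2-dM}, and then the standard tightness-plus-FDD conclusion. The only nitpick is that concentration of $\Pbm$ on paths started at the origin comes from the construction ($\ve{X}_0=[\ve{0}]$ almost surely) rather than from Proposition~\ref{prop:qp:concenration2M}, but this does not affect the argument.
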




\section{The anisotropic process and its approximation}\label{Sec:Perturbed}



\subsection{The perturbed primitive process}


The weighted max-norm uniquely identifies the balls in $\G^2$ and $\Zp^2$ in the fine Pontryagin filtration $\big(\Pon_{\rm f}(\G^2), \Pon_{\rm f}(\Zp^2)\big)$ by their radii.  For each $[(x_1,x_2)]$ in $\G^2$, define \[\|[(x_1, x_2)]\|_h = \begin{cases}\|(x_1,x_2)\|_h & \text{ if } (x_1, x_2) \not\in \Zp\times \Zp\\0 & \text{ otherwise.}\end{cases}\] For each $i$ in $\No$, write \[r_i = \rad\Big(\BBo(i)\Big) \quad \text{and} \quad \hat{r}_i = \rad\Big(\BBo(i)^\perp\Big),\] 
so that for each $k$ in $\No$, \begin{equation}\label{eq:riandhatri}r_i = \begin{cases}p^{k+\h}  &\text{ if } i = 2k+1, \; k\\[.00in] p^{k+1} &\text{ if } i = 2(k+1),\\[.00in]0 & \text{ if }i = 0.\end{cases} \quad \text{and} \quad \hat{r}_i = \begin{cases}p^{-k-1+\h}  &\text{ if } i = 2k+1\\[.00in] p^{-(k+1)} &\text{ if } i = 2(k+1)\\[.00in]1 & \text{ if }i = 0.\end{cases}\end{equation}

Fixing specified probabilities associated to the fine shells of the balls in $\Pon_{\rm f}(\G^2)$ specifies a probability mass function $\rho_{\ve{X}}$ for a random variable $\ve{X}$ that gives rise to the increments of a random walk on $\G^2$ by 
\[\rho_{\ve{X}}(\x) = \begin{cases}\frac{\ConTd}{r_k^b} \frac{1}{\Vol\big(\Shellof(k)\big)} &\text{ if } \x \in \Shellof(k), \; \forall k\in \NN\\[.05in] 0 &\text{ if } \x \in \Shellof(0),\end{cases}\] 
so that \[\Prob\Big(\ve{X}\in\Shellof(i)\Big) = \begin{cases}\frac{\ConTd}{r_i^b}  &\text{ if } i\in \NN\\[.05in]0 &\text{ if } i = 0. \end{cases}\] %
The equalities %
\begin{align*}
1 = \ConTd\Bigg\{\frac{1}{p^{\h b}} + \frac{1}{p^{b}} + \frac{1}{p^{(1+\h)b}}  + \frac{1}{p^{2b}} + \cdots \Bigg\}
= \ConTd\Bigg(\frac{1}{p^{\h b}}+ \frac{1}{p^{b}}\Bigg)\frac{p^b}{p^b - 1},
\end{align*}
imply that %
\begin{equation}\label{Gamma:def}\ConTd = \Bigg(\frac{1}{p^{\h b}}+ \frac{1}{p^{b}}\Bigg)^{-1}\frac{p^b-1}{p^b}.\end{equation}

For each $i$ in $\No$, write $\rho(i)$ to indicate the quantity $\rho_X(\x)$, where $\x$ is in $\Shellof(i)$. Denote %
\[\alpha_0 = 1 + \frac{1}{p^{b}+p^{\h b}}\frac{p^b-1}{p-1}\quad \text{and} \quad \alpha_1 = \Bigg\{1  + \frac{p^{\h b}}{p^{\h b} + p^{b}}\frac{(p^b-1)p}{p-1}\Bigg\}\frac{p^{\h b}}{p^{2b}}.\] For any $k$ in $\ZZ$, define the function $\alpha$ on $\Qp^2$ (hence by restriction on $\Zp^2$) by \[\alpha(\ve{y}) = \begin{cases}\alpha_0 &\text{ if } \ve{y} \in\Shelltf(-2k)\\ \alpha_1 &\text{ if } \ve{y} \in \Shelltf(-2k-1).\end{cases}\]  As a useful shorthand, for any $i$ in $\ZZ$ write \[\alpha(2i) = \alpha_0 \quad \text{and} \quad \alpha(2i+1) = \alpha_1.\] A straightforward calculation proves Lemma~\ref{alpha10values}.

\begin{lemma}\label{alpha10values}
For any $b$ in $(0,\infty)$ and $h$ in $(0,1)$, the quantities $\alpha_0 -1$, $\alpha_1$, and $\alpha_0p^{-b}$ are in $(0,1)$.
\end{lemma}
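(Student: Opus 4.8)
The plan is to verify the three claims separately, each reducing to an elementary inequality in $p^b$, $p^{hb}$, and $p$. Throughout I will use only that $p\ge 2$, that $b>0$ forces $p^b>1$ and $p^{hb}>1$, that $h<1$ forces $p^b>p^{hb}$, and that $p\ge 2$ forces $p-1\ge 1$. The first two claims are immediate; the real work is the bound $\alpha_1<1$, where a naive estimate is too lossy and a single algebraic regrouping saves the day.

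For $\alpha_0-1=\frac{1}{p^b+p^{hb}}\,\frac{p^b-1}{p-1}$, positivity is clear since every factor is positive. For the upper bound I would first show $\frac{p^b-1}{p-1}<p^b$, which after clearing the denominator is $(p-2)p^b+1>0$, true because $p\ge 2$; then $\alpha_0-1<\frac{p^b}{p^b+p^{hb}}<1$, so $\alpha_0\in(1,2)$. For $\alpha_0p^{-b}$, positivity is again clear, and $\alpha_0p^{-b}<1$ is equivalent to $\alpha_0<p^b$. Subtracting $1$ and dividing by $p^b-1>0$, this reduces to $(p^b+p^{hb})(p-1)>1$, which holds since $p^b+p^{hb}>2$ and $p-1\ge 1$.

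For $\alpha_1$, positivity is immediate from the product form. To prove $\alpha_1<1$ I would first put the bracketed factor over the common denominator $(p^{hb}+p^b)(p-1)$, whose numerator simplifies to $p^{hb}(p^{b+1}-1)+p^b(p-1)$, so that clearing all denominators turns $\alpha_1<1$ into
\[
p^{2hb}\big(p^{b+1}-1\big)\;<\;(p-1)\,p^{hb+b}\big(p^b-1\big)+(p-1)\,p^{3b}.
\]
The key step is to split $p^{b+1}-1=p^b(p-1)+(p^b-1)$, which rewrites the left-hand side as $p^{2hb+b}(p-1)+p^{2hb}(p^b-1)$ and lets me match it against the two right-hand terms. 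The first comparison $p^{2hb+b}(p-1)<(p-1)p^{3b}$ is just $2hb+b<3b$, i.e.\ $h<1$; the second, $p^{2hb}(p^b-1)<(p-1)p^{hb+b}(p^b-1)$, follows after cancelling $p^b-1>0$ from $p^{hb+b}>p^{2hb}$ (again $h<1$) together with $p-1\ge 1$. Adding the two strict inequalities gives the displayed bound, hence $\alpha_1<1$.

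I expect the bound $\alpha_1<1$ to be the only genuine obstacle: the displayed inequality degenerates to equality in the limits $b\to 0$ and $h\to 1$, so term-by-term bounds that discard the strict inequality $h<1$ fail, and the regrouping $p^{b+1}-1=p^b(p-1)+(p^b-1)$ is precisely what isolates the $h<1$ slack in each piece. Finally, note that $\alpha_1<1\le p^b$ yields $\alpha_1p^{-b}<1$ for free, so together with $\alpha_0\in(1,2)$ and $\alpha_0p^{-b}<1$ the lemma supplies exactly the hypotheses ($\alpha\in(0,2)$ and $\alpha p^{-b}<1$) required by the later moment estimate, Lemma~\ref{maxnorm:prim-mom-est-lemma}.
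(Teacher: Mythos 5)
Your proof is correct. The paper itself offers no argument for this lemma --- it simply states ``A straightforward calculation proves Lemma~\ref{alpha10values}'' --- so there is no written proof to compare against; your write-up supplies exactly the details the authors omit. The claims about $\alpha_0-1$ and $\alpha_0 p^{-b}$ reduce, as you show, to $p^b(p-2)+1>0$ and $(p^b+p^{hb})(p-1)>1$, which are trivially true for $p\ge 2$. The only genuinely delicate point is $\alpha_1<1$, and your treatment of it checks out: the common-denominator numerator is indeed $p^{hb}(p^{b+1}-1)+p^b(p-1)$, the equivalence of $\alpha_1<1$ with the displayed inequality is correct, and the regrouping $p^{b+1}-1=p^b(p-1)+(p^b-1)$ splits the left side into two pieces that are dominated, strictly, by the two right-hand terms using exactly the hypothesis $h<1$ (and $p-1\ge1$). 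One minor inaccuracy in your closing remark: the displayed inequality degenerates to equality as $h\to1^-$ only when $p=2$ (for $p>2$ the gap at $h=1$ is $(p-2)(p^b-1)p^{2b}>0$, matching the fact that $\alpha_1|_{h=1}<1$ strictly for $p>2$); the $b\to0$ degeneration is as you say. This does not affect the proof, since your two comparisons use $h<1$ honestly; it only softens the claim that every term-by-term bound must fail at $h=1$.
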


Denote by $\phi_{\ve{X}}$ the characteristic function for the random variable $\ve{X}$.

\begin{proposition}\label{prop:charh}
For any $\y$ in $\Zp^2$ and $\varepsilon$ in $\{0,1\}$, \[\phi_{\ve{X}}(\y) = 1-\alpha_h(\y)\|\y\|_h^b.\]
\end{proposition}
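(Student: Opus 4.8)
The plan is to mirror the one-component computation of Proposition~\ref{CharFunX1DdifD}, replacing the single geometric scale there by the two interleaved scales that the fine filtration $\Pon_{\rm f}(\G^2)$ carries. First I would expand
\[
\phi_{\ve X}(\y) = \int_{\G^2}\langle-\x,\y\rangle\,\rho_{\ve X}(\x)\,\d\x = \sum_{i\in\NN}\rho(i)\int_{\Shellof(i)}\langle\x,\y\rangle\,\d\x,
\]
using that $\ve X$ places no mass at the origin and that $\rho_{\ve X}$ equals the constant $\rho(i)$ on each fine shell $\Shellof(i)$ (the passage from $\langle-\x,\y\rangle$ to $\langle\x,\y\rangle$ being justified, as in the one-dimensional case, by the symmetry of the shells under negation). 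Writing $\Shellof(i) = \BBo(i)\smallsetminus\BBo(i-1)$ and applying Lemma~\ref{lem:LCAG:Int} turns each shell integral into a difference of ball integrals
\[
\int_{\BBo(i)}\langle\x,\y\rangle\,\d\x = \Vol\big(\BBo(i)\big)\,\indicator{\BBo(i)^\perp}(\y) = p^{i}\,\indicator{\Ballt(-i)}(\y),
\]
where I use $\Vol(\BBo(i)) = p^{i}$ from \eqref{eq:BallmVol-2d} and the identification $\BBo(i)^\perp = \Ballt(-i)$ supplied by Proposition~\ref{Prop:DualityScaling:DiscreteB} at $m=0$.

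Next I would regroup the telescoping sum exactly as in Proposition~\ref{CharFunX1DdifD}, collecting consecutive differences into a single sum weighted by $\rho(i)-\rho(i+1)$. Because the annihilators $\Ballt(-i)$ shrink as $i$ grows, for a fixed $\y$ in the dual fine shell $\Shelltf(-j)$ the factor $\indicator{\Ballt(-i)}(\y)$ vanishes once $i>j$, so the sum truncates at $i=j$. Carrying out the summation by parts and substituting $\rho(i) = \ConTd/\big(r_i^{b}\,\Vol(\Shellof(i))\big)$ together with $\Vol(\Shellof(i)) = (1-p^{-1})p^{i}$ from \eqref{eq:FineShellVolume}, the telescoped expression should collapse to
\[
\phi_{\ve X}(\y) = 1 - \ConTd\left(\sum_{i>j}\frac{1}{r_i^{b}} + \frac{1}{(p-1)\,r_{j+1}^{b}}\right),
\]
the leading $1$ being produced by the normalization $\sum_{i\in\NN}\ConTd\,r_i^{-b}=1$ that fixed $\ConTd$ in \eqref{Gamma:def}.

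The final step is to evaluate the tail, and this is where the two-scale structure does its work. Using the explicit radii in \eqref{eq:riandhatri}, namely $r_{2k}=p^{k}$ and $r_{2k+1}=p^{k+\h}$, the tail $\sum_{i>j}r_i^{-b}$ reproduces the full normalizing series up to an overall power of $p$, so that $\ConTd\sum_{i>j}r_i^{-b}$ collapses to a single power of $p$; adding the boundary term $\ConTd/\big((p-1)r_{j+1}^{b}\big)$ then assembles the coefficient. The computation bifurcates on the parity of $j$: for even $j=2k$ it should produce $\alpha_0\,\hat r_{2k}^{\,b}$, and for odd $j=2k+1$ the constant $\alpha_1\,\hat r_{2k+1}^{\,b}$, with $\hat r_j = \rad(\BBo(j)^\perp)=\|\y\|_h$, which gives $\phi_{\ve X}(\y)=1-\alpha_h(\y)\|\y\|_h^b$.

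I expect the parity bookkeeping to be the main obstacle. One must track which of the two scales each index $i>j$ contributes, re-sum the interleaved geometric series, and verify that the boundary term combines with the tail to reproduce precisely the constants $\alpha_0$ and $\alpha_1$ of the definition against the weighted norm. The clean single-scale cancellation of Proposition~\ref{CharFunX1DdifD} is here replaced by a two-scale one, and keeping the norm normalization on the dual variable consistent between the even and odd shells is the delicate point; the even case matches $\alpha_0$ transparently, while the odd case requires care in reconciling the boundary contribution with the weighting $p^{\h-1}$ that distinguishes $\|\cdot\|_h$ from the max-norm.
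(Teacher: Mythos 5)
Your proposal retraces the paper's own proof step for step: the same expansion of $\phi_{\ve X}$ over the fine shells, the same conversion of shell integrals into differences of ball integrals via Lemma~\ref{lem:LCAG:Int} and the duality statement of Proposition~\ref{Prop:DualityScaling:DiscreteB}, the same Abel summation, and the same parity split at the end. Your intermediate closed form
\[
\phi_{\ve X}(\y)\;=\;1-\ConTd\left(\sum_{i>j}\frac{1}{r_i^{b}}+\frac{1}{(p-1)\,r_{j+1}^{b}}\right),
\qquad \y\in\Shelltf(-j),
\]
is correct, and it is a tidier packaging of the finite sums that the paper writes out term by term.

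The gap is in the last step, which you assert (``should produce'') rather than carry out, and it is exactly where the content of the proposition lives. The even case assembles as you say: for $j=2k$ one gets $\ConTd\sum_{i>2k}r_i^{-b}=p^{-kb}$, the boundary term contributes $(\alpha_0-1)p^{-kb}$, and the total is $1-\alpha_0\hat{r}_{2k}^{\,b}$. The odd case does not. For $j=2k+1$ the tail begins at an even scale, so it does \emph{not} reproduce the normalizing series up to a power of $p$; instead
\[
\ConTd\sum_{i>2k+1}\frac{1}{r_i^{b}}\;=\;\frac{p^{\h b}+1}{p^{b}+p^{\h b}}\,p^{-kb},
\]
and adding the boundary term $\ConTd/\big((p-1)p^{(k+1)b}\big)$ gives, after simplification,
\[
\phi_{\ve X}(\y)\;=\;1-\alpha_1\,p^{-(k+\h-1)b},
\]
whereas \eqref{eq:riandhatri} and Proposition~\ref{Prop:DualityRadius:Continuous} give $\hat{r}_{2k+1}^{\,b}=\|\y\|_h^{\,b}=p^{-(k+1-\h)b}$ on that shell. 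The two exponents differ by $2(1-\h)b$ and coincide only at $\h=1$, so the identification ``$\hat{r}_j=\|\y\|_h$ finishes the proof'' fails for odd $j$: what the computation actually produces on odd dual shells is the radius with the \emph{reciprocal} weight $p^{1-\h}$ on the second coordinate, not the weight $p^{\h-1}$. This is precisely the reconciliation you flagged as delicate, and it cannot be carried out as stated.

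It is worth saying plainly that the paper's own proof makes the same leap at the same spot: its displayed sum in the odd case correctly evaluates to $1-\alpha_1 p^{-(k+(\h-1))b}$, and the concluding equality with $1-\alpha_1\|\y\|_h^{b}$ is incompatible with the paper's own radius formulas unless $\h=1$. So completing your sketch honestly would not close the proposition as written; it would surface this mismatch. Repairing the argument requires either absorbing the factor $p^{2(1-\h)b}$ into the odd-shell constant or measuring the dual variable with the reciprocally weighted norm $\max\big(|y_1|_p,\,p^{1-\h}|y_2|_p\big)$; under either repair your telescoping scheme then does go through verbatim.
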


\begin{proof}
Take the Fourier transform of the probability mass function $\rho_X$ to obtain for any $\y$ in $\Zp$ the equalities 
\begin{align*}
\phi_{\ve{X}}(\y) & = \int_{\G^2} \langle-\x, \y\rangle \rho_X(\x)\,\d\x %
\\ & = \ConTd\sum_{i\in\NN} \int_{\Shellof(i)} \langle-\x, \y\rangle \frac{1}{\Vol\big(\Shellof(i)\big)} \frac{1}{r_i^b} \,\d\x
\\ & = \ConTd\sum_{i\in\NN} \frac{1}{\Vol\big(\Shellof(i)\big)} \frac{1}{r_i^b}\left(\int_{\BBo(i)} \langle-\x, \y\rangle \,\d\x - \int_{\BBo(i-1)} \langle-\x, \y\rangle \,\d\x\right)
\\ & = -\frac{1}{\Vol\big(\Shellof(1)\big)} \frac{\ConTd}{r_1^b}\Vol\big(\BBo(0)\big){\mathds 1}_{\Gd}(\y) \\ &\qquad + \sum_{i\in\NN} \ConTd\left(\frac{1}{r_i^b}\frac{1}{\Vol\big(\Shellof(i)\big)} - \frac{1}{r_{i+1}^b}\frac{1}{\Vol\big(\Shellof(i+1)\big)}\right) \Vol\big(\BBo(i)\big){\mathds 1}_{\Ballt(i)^\perp}(\y)
\end{align*}

Use the equalities \[\frac{1}{\Vol\big(\Shellof(i)\big)} = \frac{1}{\Vol\big(\BBo(i)\big)} \frac{p}{p-1}\quad \text{and} \quad \frac{1}{\Vol\big(\Shellof(i+1)\big)} = \frac{1}{\Vol\big(\BBo(i)\big)}\frac{1}{p-1}\] to rewrite $\phi_X(\y)$ as %
\begin{align*}
\phi_{\ve{X}}(\y) & = -\frac{1}{p-1} \frac{\ConTd}{r_1^b} + \sum_{i\in \NN} \left(\frac{\ConTd}{r_i^b}\frac{p}{p-1} - \frac{\ConTd}{r_{i+1}^b}\frac{1}{p-1}\right) {\mathds 1}_{\Ballt(i)^\perp}(\y) \\
& = \ConTd\left\{-\frac{1}{p-1} \frac{1}{r_1^b} + \frac{1}{p-1}\sum_{i\in \NN} \left(\frac{p}{r_i^b} - \frac{1}{r_{i+1}^b}\right) {\mathds 1}_{\Ballt(i)^\perp}(\y)\right\}.
\end{align*} 
If $\y$ is in $\Shellof(2k)$ for some $k$ in $\NN$, then
\begin{align*}
\phi_{\ve{X}}(\y) &= \ConTd\Bigg\{-\frac{1}{p-1} \frac{1}{p^{\h b}} + \frac{1}{p-1}\Bigg(\Big(\frac{p}{p^{\h b}} - \frac{1}{p^b}\Big) + \Big(\frac{p}{p^{b}} - \frac{1}{p^{(1+\h)b}}\Big) + \Big(\frac{p}{p^{(1+\h)b}} - \frac{1}{p^{2b}}\Big) \\ & \qquad + \Big(\frac{p}{p^{2b}} - \frac{1}{p^{(2+\h)b}}\Big)+ \Big(\frac{p}{p^{(2+\h)b}} - \frac{1}{p^{3b}}\Big) + \cdots + \Big(\frac{p}{p^{kb}} - \frac{1}{p^{(k+\h)b}}\Big)\Bigg)\Bigg\}\\%
& = 1 - \Bigg\{1 + \frac{1}{p^{b}+p^{\h b}}\frac{p^b-1}{p-1}\Bigg\}\frac{1}{p^{kb}} = 1 - \alpha_0\|\ve{y}\|_h^b.
\end{align*}
If $\y$ is in $\Shellof(2k+1)$ for some $k$ in $\NN$, then
\begin{align*}
\phi_{\ve{X}}(\y) & = \ConTd\Bigg\{-\frac{1}{p-1}\frac{1}{p^{\h b}} + \frac{1}{p-1}\Bigg(\Big(\frac{p}{p^{\h b}} - \frac{1}{p^b}\Big) + \Big(\frac{p}{p^{b}} - \frac{1}{p^{(1+\h)b}}\Big)\\
 & \qquad + \Big(\frac{p}{p^{(1+\h)b}} - \frac{1}{p^{2b}}\Big) + \Big(\frac{p}{p^{2b}} - \frac{1}{p^{(2+\h)b}}\Big) + \Big(\frac{p}{p^{(2+\h)b}} - \frac{1}{p^{3b}}\Big)\\
& \qquad + \cdots + \Big(\frac{p}{p^{kb}} - \frac{1}{p^{(k+\h)b}}\Big) + \Big(\frac{p}{p^{(k+\h)b}} - \frac{1}{p^{(k+1)b}}\Big)\Bigg)\Bigg\}\\%
& = 1 - \Bigg\{1  + \frac{p^{\h b}}{p^{\h b} + p^{b}}\frac{(p^b-1)p}{p-1}\Bigg\}\frac{p^{\h b}}{p^{2b}}\frac{1}{p^{(k+(\h -1))b}}  = 1 - \alpha_1\|\y\|_h^b.
\end{align*}
Consolidate the results to obtain the equalities \[\begin{cases}\phi_{\ve{X}}(\y) = 1-\alpha(\y)\|\y\|_h^b&\text {}\\\phi_{\ve{X}}(\ve{0}) = 1,& \text{}\end{cases}\] since $\phi_{\ve{X}}$ is the Fourier transform of a probability mass function.

\end{proof}

Take $({\ve{X}}_i)$ to be a sequence of independent random variables with the same distribution as ${\ve{X}}$, the random variable ${\ve{X}}_0$ to be almost surely equal to $[\ve{0}]$, and \[{\ve{S}}_n = {\ve{X}}_0 + {\ve{X}}_1 + \cdots + {\ve{X}}_n.\] The probability mass function $\rho_\ast(n,\cdot)$ for ${\ve{S}}_n$ is given by  
\begin{align*}
\rho_\ast(n,\x) & = \int_{\Zp^2} \langle \x, \y\rangle \phi_X(\y)^n\;\d\y
\\& = \sum_{i\in \No} \int_{\Shelltf(-i)} \Big(1- \alpha(\y)\|\y\|^b\Big)^n\langle \x, \y\rangle\;\d\y
\\& = \sum_{i\in \No} \Big(1- \alpha(i)\hat{r}_i^{b}\Big)^n\Big(\int_{\Ballt(-i)} \langle \x, \y\rangle\;\d\y - \int_{\Ballt(-i-1)} \langle \x, \y\rangle\;\d\y\Big)
\\& = \Big(1- \alpha(0)\Big)^n\int_{(\Zp)^2} \langle \x, \y\rangle\;\d\y\\ & \qquad + \sum_{i\in \No} \Big(\Big(1- \alpha(i+1)\hat{r}_{i+1}^{-b}\Big)^n - \Big(1- \alpha(i)\hat{r}_i^{-b}\Big)^n\Big)\int_{\Ballt(-i-1)} \langle \x, \y\rangle\;\d\y
\\& = \Big(1- \alpha(0)\Big)^n{\mathds 1}_{\Ballt(0)}(\x) \\&\qquad+ \sum_{i\in \No} \Big(\Big(1- \alpha(i+1)\hat{r}_{i+1}^{-b}\Big)^n - \Big(1- \alpha(i)\hat{r}_i^{-b}\Big)^n\Big)\\&\hspace{3in}\cdot\Vol\Big(\Ballt(-i-1)\Big){\mathds 1}_{\BBo(i+1)}(\x)
\\& = \Big(1- \alpha(0)\Big)^n{\mathds 1}_{\BBo(0)}(\x)%
\\&\qquad+ \sum_{k\in \No} \Big(\Big(1- \alpha(2k+1)\hat{r}_{2k+1}^{-b}\Big)^n - \Big(1- \alpha(2k)\hat{r}_{2k}^{-b}\Big)^n\Big)
\\&\hspace{3in}\cdot\Vol\Big(\Ballt(-2k-1)\Big){\mathds 1}_{\BBo(2k+1)}(\x)
\\&\qquad+ \sum_{k\in \No} \Big(\Big(1- \alpha(2k+2)\hat{r}_{2k+2}^{-b}\Big)^n - \Big(1- \alpha(2k+1)\hat{r}_{2k+1}^{-b}\Big)^n\Big)
\\&\hspace{3in}\cdot\Vol\Big(\Ballt(-2k-2)\Big){\mathds 1}_{\BBo(2k+2)}(\x)
\end{align*}
Use the formula for the volume of balls in $\Zp^2$ and \eqref{eq:riandhatri} to obtain Proposition~\ref{Prop:FormulaForChar:WtCase}.

\begin{proposition}\label{Prop:FormulaForChar:WtCase}
For any $\x$ in $\Zp^2$, %
\begin{align*}
\rho_\ast(n,\x) & =\Big(1- \alpha(0)\Big)^n{\mathds 1}_{\BBo(0)}(\x)%
\\&\qquad+ \sum_{k\in \No} \Big(\Big(1- \alpha_1p^{-(k+1-h)b}\Big)^n - \Big(1- \alpha_0p^{-kb}\Big)^n\Big)p^{-(2k+1)}{\mathds 1}_{\BBo(2k+1)}(\x)
\\&\qquad+ \sum_{k\in \No} \Big(\Big(1- \alpha_0p^{-(k+1)b}\Big)^n - \Big(1- \alpha_1p^{-(k+1-h)b}\Big)^n\Big)p^{-2(k+1)}{\mathds 1}_{\BBo(2k+2)}(\x).
\end{align*}
\end{proposition}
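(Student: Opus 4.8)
The plan is to start from the Fourier inversion formula for the $n$-step law, reduce the resulting integral to a telescoping sum indexed by the balls of the fine filtration, and then substitute the explicit dual radii and ball volumes, taking care to resolve the alternating two-scale structure into its even and odd strata. The heavy lifting is already displayed in the computation preceding the statement; the content of the proposition is precisely the final substitution, and the only genuine bookkeeping subtlety is the parity split, since the factors $\alpha_0 p^{-kb}$ and $\alpha_1 p^{-(k+1-h)b}$ attach to alternating indices.

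First I would write $\rho_\ast(n,\x)=\int_{\Zp^2}\langle\x,\y\rangle\,\phi_{\ve{X}}(\y)^n\,\d\y$ and invoke Proposition~\ref{prop:charh} to replace $\phi_{\ve{X}}(\y)$ by $1-\alpha(\y)\|\y\|_h^b$. Decomposing $\Zp^2$ into the fine shells $\Shelltf(-i)$ and noting that on each such shell both $\alpha(\y)=\alpha(i)$ and $\|\y\|_h=\hat r_i$ are constant reduces the integrand on $\Shelltf(-i)$ to the scalar $\big(1-\alpha(i)\hat r_i^{\,b}\big)^n$. This constancy rests on the annihilator identity $\BBo(i)^\perp=\Ballt(-i)$, which is Proposition~\ref{Prop:DualityScaling:DiscreteB} at $m=0$ and pins $\rad\big(\Ballt(-i)\big)=\hat r_i$, so that every $\y$ in $\Shelltf(-i)=\Ballt(-i)\smallsetminus\Ballt(-i-1)$ has weighted-max-norm exactly $\hat r_i$; the value of $\alpha$ is constant on the shell by its very definition.

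Next I would write each shell integral as the difference $\int_{\Ballt(-i)}-\int_{\Ballt(-i-1)}$ of character integrals over balls and apply Abel summation to the telescoping differences. The sum then becomes indexed by the balls $\Ballt(-i-1)$, $i\in\No$, with coefficient $\big(1-\alpha(i+1)\hat r_{i+1}^{\,b}\big)^n-\big(1-\alpha(i)\hat r_i^{\,b}\big)^n$, together with the boundary term $\big(1-\alpha(0)\big)^n\mathds 1_{\BBo(0)}(\x)$ arising from $\int_{\Ballt(0)}\langle\x,\y\rangle\,\d\y$ at $i=0$. Lemma~\ref{lem:LCAG:Int}, applied to the character $\y\mapsto\langle\x,\y\rangle$ on the dual group $\Zp^2$, evaluates $\int_{\Ballt(-i-1)}\langle\x,\y\rangle\,\d\y=\Vol\big(\Ballt(-i-1)\big)\,\mathds 1_{\Ballt(-i-1)^\perp}(\x)$, and involutivity of the annihilator gives $\Ballt(-i-1)^\perp=\BBo(i+1)$. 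This reproduces the display immediately before the statement.

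Finally I would substitute the explicit geometric data. From \eqref{eq:riandhatri}, $\hat r_{2k}^{\,b}=p^{-kb}$ with $\alpha(2k)=\alpha_0$ and $\hat r_{2k+1}^{\,b}=p^{-(k+1-h)b}$ with $\alpha(2k+1)=\alpha_1$; from \eqref{eq:BallmVol-2d}, $\Vol\big(\Ballt(-2k-1)\big)=p^{-(2k+1)}$ and $\Vol\big(\Ballt(-2k-2)\big)=p^{-2(k+1)}$. Splitting the single index $i+1$ by parity—odd $i+1=2k+1$ (that is, $i=2k$) yielding the $\mathds 1_{\BBo(2k+1)}$ sum, and even $i+1=2k+2$ (that is, $i=2k+1$) yielding the $\mathds 1_{\BBo(2k+2)}$ sum—produces the two displayed sums. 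I expect this parity split, rather than any analytic estimate, to be the main point to guard against: one must check that the odd coefficient is $\big(1-\alpha_1 p^{-(k+1-h)b}\big)^n-\big(1-\alpha_0 p^{-kb}\big)^n$ and the even coefficient is $\big(1-\alpha_0 p^{-(k+1)b}\big)^n-\big(1-\alpha_1 p^{-(k+1-h)b}\big)^n$, exactly matching the alternating assignment $\alpha(2i)=\alpha_0$, $\alpha(2i+1)=\alpha_1$.
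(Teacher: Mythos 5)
Your proposal is correct and follows essentially the same route as the paper: Fourier inversion of $\phi_{\ve{X}}^n$, decomposition over the fine shells $\Shelltf(-i)$ where $\alpha$ and $\|\cdot\|_h$ are constant, telescoping into ball integrals evaluated by Lemma~\ref{lem:LCAG:Int} and the annihilator identity $\Ballt(-i-1)^\perp=\BBo(i+1)$, and finally the parity split with the substitutions from \eqref{eq:riandhatri} and the ball volumes. You even use the correct exponent $\hat r_i^{\,b}$ throughout, where the paper's intermediate display contains a sign typo ($\hat r_i^{-b}$); nothing further is needed.
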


Denote by $\PbG$ the measure on the space $F(\No\colon \G^2)$ that the convolution semigroup of probability mass functions $(\rho_\ast(n,\cdot))$ determines.  The stochastic process $(F(\No\colon \G^2), \PbG, S)$ is the \emph{fine primitive process}.


\subsection{Moment estimates}


Denote by $\EXG$ the expected value with respect to $\PbG$.

\begin{theorem}\label{thm:MomExthprim}
For any $n$ in $\No$ and any $r$ in $(0,b)$, there is a constant $K$ so that \[\EXG\big[\|\ve{S}_n\|_h^r\big] \leq Kn^{\frac{r}{b}}.\]
\end{theorem}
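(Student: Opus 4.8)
The plan is to compute $\EXG\big[\|\ve{S}_n\|_h^r\big]$ by integrating $\|\cdot\|_h^r$ against the mass function of Proposition~\ref{Prop:FormulaForChar:WtCase}, to convert the resulting sign‑alternating telescoping sum into a nonnegative one by a summation by parts, and then to dominate it by two single‑scale sums handled by Lemma~\ref{maxnorm:prim-mom-est-lemma}. First I would decompose $\G^2$ into the fine shells $\Shellof(i)$, on which $\|\ve{x}\|_h\equiv r_i$. Writing $\beta_j:=\alpha(j)\hat r_j^{\,b}$, so that $\beta_{2k}=\alpha_0p^{-kb}$ and $\beta_{2k+1}=\alpha_1p^{-(k+1-h)b}$ by \eqref{eq:riandhatri}, Proposition~\ref{Prop:FormulaForChar:WtCase} gives the coefficient of $\indicator{\BBo(j)}$ as $\big((1-\beta_j)^n-(1-\beta_{j-1})^n\big)p^{-j}$ for each $j\ge 1$ (the $\BBo(0)$ term drops, since $\|\cdot\|_h$ vanishes there). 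Using \eqref{eq:BallmVol-2d} to evaluate $\int_{\BBo(j)}\|\ve{x}\|_h^r\,\d\ve{x}=(1-p^{-1})S_j$ with $S_j:=\sum_{i=1}^{j}r_i^r p^i$ then yields
\[
\EXG\big[\|\ve{S}_n\|_h^r\big]=(1-p^{-1})\sum_{j\ge 1}\big((1-\beta_j)^n-(1-\beta_{j-1})^n\big)c_j,\qquad c_j:=p^{-j}S_j.
\]

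The obstacle is that consecutive $\beta_j$ alternate between $\alpha_0$ and $\alpha_1$, so these increments do not have the single‑parameter shape that Lemma~\ref{maxnorm:prim-mom-est-lemma} requires. To linearize, I would set $w_j:=1-(1-\beta_j)^n$ and sum by parts. Because $\beta_j\to0$ forces $w_j\to0$, while the successive ratios $r_{i+1}^rp^{i+1}/(r_i^rp^i)\in\{p^{hr+1},\,p^{(1-h)r+1}\}$ all exceed $p$, the sums $S_j$ grow geometrically, so $c_j\le(1-p^{-1})^{-1}r_j^r$ and the boundary term satisfies $w_Mc_M\le n\beta_M c_M\lesssim p^{(r-b)M/2}\to0$ since $r<b$. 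Summation by parts therefore gives the nonnegative representation
\[
\EXG\big[\|\ve{S}_n\|_h^r\big]=(1-p^{-1})\Big(w_0c_1+\sum_{j\ge 1}w_j\,(c_{j+1}-c_j)\Big)\le (1-p^{-1})\Big(w_0c_1+(1-p^{-1})^{-1}\sum_{j\ge 1}w_j\,r_{j+1}^r\Big),
\]
where I bounded $c_{j+1}-c_j\le c_{j+1}\le(1-p^{-1})^{-1}r_{j+1}^r$, using $c_j\ge0$.

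Finally I would split $\sum_{j\ge1}w_jr_{j+1}^r$ by the parity of $j$ into $\Sigma_0=p^{hr}\sum_{k\ge1}\big(1-(1-\alpha_0p^{-kb})^n\big)p^{kr}$ and $\Sigma_1=\sum_{l\ge1}\big(1-(1-\alpha_1p^{-(l-h)b})^n\big)p^{lr}$, each a single‑scale sum. For the geometric sequence $\gamma_l$ in either piece I would use the telescoping identity $1-(1-\gamma_k)^n=\sum_{l\ge k}\big((1-\gamma_{l+1})^n-(1-\gamma_l)^n\big)$, valid since $\gamma_l\to0$; interchanging the nonnegative sums and summing the inner geometric series turns each piece into $\tfrac{p^r}{p^r-1}\sum_{l}d_l\,(p^{lr}-1)$ with $d_l=(1-\gamma_{l+1})^n-(1-\gamma_l)^n\ge0$. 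As $p^{-2l}-1\le0$, this is at most $\tfrac{p^r}{p^r-1}\sum_l d_l(p^{lr}-p^{-2l})$, which Lemma~\ref{maxnorm:prim-mom-est-lemma} bounds by $Kn^{r/b}$. Lemma~\ref{alpha10values} supplies the hypotheses: for $\Sigma_0$ the parameter is $\alpha_0\in(1,2)$ with $\alpha_0p^{-b}<1$, and for $\Sigma_1$ I would first reindex by $i=l-1$ so that the effective parameter becomes $\alpha_1p^{(h-1)b}\in(0,1)$. The leftover $O(1)$ boundary pieces $w_0c_1$ and the $l=1$ terms are each $\le Kn^{r/b}$ for $n\ge1$, completing the bound.

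The main obstacle is precisely the interleaving of $\alpha_0$ and $\alpha_1$ in the increments of the walk: it is the summation by parts that replaces the sign‑indefinite alternating sum by a manifestly nonnegative one, and the telescoping of $1-(1-\gamma)^n$ into consecutive single‑scale increments that reduces each parity class to the building blocks controlled by Lemma~\ref{maxnorm:prim-mom-est-lemma}. The two supporting technical points—the geometric bound $c_j\le(1-p^{-1})^{-1}r_j^r$ and the vanishing of the boundary term $w_Mc_M$—both rest on the hypothesis $r<b$.
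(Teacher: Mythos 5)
Your proof is correct, but it takes a genuinely different route from the paper's. The paper handles the interleaving of $\alpha_0$ and $\alpha_1$ not by summation by parts but by a pairing trick: it first bounds the density pointwise, replacing each indicator $\mathds 1_{\BBo(2k+1)}$ by the larger $\mathds 1_{\BBo(2k+2)}$ (inequality \eqref{ineq:Aeven-Aodd}), so that the coefficients combine as $A(2k+1)+A(2k+2)=\big(1-\alpha_0p^{-(k+1)b}\big)^n-\big(1-\alpha_0p^{-kb}\big)^n$; the $\alpha_1$-dependent terms cancel outright, and a single application of Lemma~\ref{maxnorm:prim-mom-est-lemma} with parameter $\alpha_0$ finishes. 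Your argument keeps the exact fine-scale expansion, Abel-sums it into tail weights $w_j$, splits by parity, re-telescopes each class, and invokes Lemma~\ref{maxnorm:prim-mom-est-lemma} twice (with $\alpha_0$ and with the effective parameter $\alpha_1p^{(h-1)b}$, both admissible by Lemma~\ref{alpha10values}); this costs more bookkeeping and two lemma applications instead of one. What your route buys is robustness: summation by parts is an identity, and your subsequent estimates use only $w_j\ge 0$ and $0\le c_j\le(1-p^{-1})^{-1}r_j^r$, which hold unconditionally, whereas the paper's pointwise bound \eqref{ineq:Aeven-Aodd} implicitly requires $A(2k+1)\ge 0$ for every $k$. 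That sign condition holds for $k\ge 1$ (there $\alpha_1p^{-(1-h)b}<\alpha_0$ and both arguments of the $n$-th powers lie in $(0,1)$), but at $k=0$ it can fail for even $n$: for instance $p=2$, $b=1$, $h=0.9$, $n=2$ gives $\alpha_0\approx 1.259$, $\alpha_1p^{-(1-h)b}\approx 0.856$, hence $A(1)=(1-\alpha_1p^{-(1-h)b})^2-(1-\alpha_0)^2<0$, so the paper's displayed pointwise inequality fails on $\Shellof(2)$ (the defect is only $O(1)$ and harmless for the theorem, but it is a genuine gap in that step). Your version sidesteps this entirely. One small correction to your closing remark: the geometric bound $c_j\le(1-p^{-1})^{-1}r_j^r$ does not use $r<b$ at all (it holds for every $r>0$); only the vanishing of the boundary term $w_Mc_M$ and the final application of Lemma~\ref{maxnorm:prim-mom-est-lemma} need $r<b$.
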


\begin{proof}

Compress notation by writing \[A(2k+1) = \Big(1- \alpha_1p^{-(k+1-h)b}\Big)^n - \Big(1- \alpha_0p^{-kb}\Big)^n\] and \[A(2k+2) = \Big(1- \alpha_0p^{-(k+1)b}\Big)^n - \Big(1- \alpha_1p^{-(k+1-h)b}\Big)^n\] to see that
\begin{align}\label{ineq:Aeven-Aodd}
\rho_\ast(n,\x) & = \Big(1- \alpha(0)\Big)^n{\mathds 1}_{\BBo(0)}(\x)+ \sum_{k\in \No} A(2k+1)p^{-(2k+1)}{\mathds 1}_{\BBo(2k+1)}(\x)
\notag\\&\qquad+ \sum_{k\in \No} A(2k+2)p^{-(2k+2)}{\mathds 1}_{\BBo(2k+2)}(\x)
\notag\\& \leq \Big(1- \alpha(0)\Big)^n{\mathds 1}_{\BBo(0)}(\x)+ \sum_{k\in \No} A(2k+1)p^{-(2k+1)}{\mathds 1}_{\BBo(2k+2)}(\x)
\\&\qquad+ \sum_{k\in \No} A(2k+2)p^{-(2k+2)}{\mathds 1}_{\BBo(2k+2)}(\x).\notag
\end{align}

The inequality \eqref{ineq:Aeven-Aodd} implies that
\begin{align*}
\EXG\big[\|\ve{S}_n\|_h^r\big]  & \leq \int_{\G^2} \|x\|^r\sum_{k\in \No}\Big(A(2k+1)p^{-(2k+1)} + A(2k+2)p^{-(2k+2)}\Big)\\
&\hspace{3.5in}\cdot{\mathds 1}_{\BBo(2k+2)}(\x) \,\d\x\\
& \leq \sum_{k\in \No}p^{-(2k+1)}\Big(A(2k+1)+ A(2k+2)\Big)\int_{\BBo(2k+2)} \|x\|^r \,\d\x\\
& = \sum_{k\in \No}p^{-(2k+1)}\Big(\Big(1- \alpha_0p^{-(k+1)b}\Big)^n- \Big(1- \alpha_0p^{-kb}\Big)^n\Big)
\sum_{i=1}^{2k+2}\int_{\Shellof(i)} \|x\|^r \,\d\x \\%
& \leq 2\sum_{k\in \No}p^{-(2k+1)}\Big(\Big(1- \alpha_0p^{-(k+1)b}\Big)^n- \Big(1- \alpha_0p^{-kb}\Big)^n\Big)
\sum_{i=1}^{k+1}\int_{\Shelloc(2i)} \|x\|^r \,\d\x \\
& = 2\sum_{k\in \No}p^{-(2k+1)}\Big(\Big(1- \alpha_0p^{-(k+1)b}\Big)^n- \Big(1- \alpha_0p^{-kb}\Big)^n\Big)\\
&\hspace{2in}\cdot\Big(\frac{p^2}{p^2-1}\Big)\Big(p^{1r}p^2 + p^{2r}p^4 + \cdots + p^{(k+1)r}p^{2(k+1)}\Big)\\%
& = 2p\frac{p^{r+2}}{p^{r+2}-1}\frac{p^2}{p^2-1}\sum_{k\in \No}\Big(\Big(1- \alpha_0p^{-(k+1)b}\Big)^n- \Big(1- \alpha_0p^{-kb}\Big)^n\Big)\\
&\hspace{3.5in}\cdot\Big(p^{(k+1)r}-p^{-2(k+1)}\Big).%
\end{align*}
Lemma~\ref{maxnorm:prim-mom-est-lemma} therefore implies the existence of the desired upper bound on the expected value.

\end{proof}


\subsection{Embedding the primitive process}


Define the process $\ve{S}^{(m)}$ with state space $(\Gm)^2$ by \[\ve{S}^{(m)} = Q_m^{(2)}\circ \ve{S}.\]  This process has generator $\ve{X}^{(m)}$ that is given by \[\ve{X}^{(m)} = Q_m^{(2)}\circ \ve{X},\] and has a law given by \eqref{eq:newdensitydiscrete2d}, but with respect to the probabilities defined on the fine shells. %
For each $n$, the law $\rho_m\big(n,\cdot\big)$ for the random variable $\ve{S}_n^{(m)}$ is similarly given by \eqref{eq:newdensitydiscretenstep2d}, but again with respect to the probabilities for $\ve{X}$ defined on the fine shells.%

The probabilities that $\ve{S}^{(m)}$ determines for the simple cylinder sets extends to a probability measure $\Pbm$ on $F(\No\colon (\Gm)^2)$. Denote by $\EXm$ the expected value with respect to $\Pbm$.

\begin{proposition}\label{MomEstX1DdifDh}
There is a constant $K$ independent of $m$ and $n$ so that for any $m$ and $n$ in $\No$, if $r$ is in $(0,b)$, then
\[\EXm\Big[\|\ve{S}^{(m)}_n\|^r_h\Big] \leq Kp^{-rm}n^{\frac{r}{b}}.\]
\end{proposition}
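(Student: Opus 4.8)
The plan is to reduce the embedded moment to the primitive moment already controlled by Theorem~\ref{thm:MomExthprim}, following the template of the max-norm analog Proposition~\ref{MomEstX2DMdifD}. First I would write
\[
\EXm\Big[\|\ve{S}^{(m)}_n\|_h^r\Big] = \int_{(\Gm)^2} \|[\ve{x}]_m\|_h^r\,\rho_m\big(n,[\ve{x}]_m\big)\,\d[\ve{x}]_m,
\]
and apply Lemma~\ref{StateSpaces:Intmoverballs2} to re-express the right-hand side as an integral over $\Qp^2$ against $\d\ve{x}$. Then I would invoke the density-scaling identity \eqref{eq:newdensitydiscretenstep2d}, namely $\rho_m\big(n,[p^m\ve{x}]_m\big)=p^{2m}\rho_\ast(n,[\ve{x}])$, to trade the level-$m$ density for the primitive mass function $\rho_\ast(n,\cdot)$ whose moments Theorem~\ref{thm:MomExthprim} bounds.

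The decisive step is the change of variables given by the diagonal scaling $\ve{x}\mapsto p^m\ve{x}$. The embedding $Q_m^{(2)}$ multiplies both coordinates by the same power $p^m$, and this isotropy of the scaling is exactly what lets the argument survive in the anisotropic setting: the weight $p^{h-1}$ attached to the second coordinate is untouched, so one obtains the clean homogeneity
\[
\|p^m\ve{x}\|_h = \max\big(|p^mx_1|_p,\,p^{h-1}|p^mx_2|_p\big) = p^{-m}\|\ve{x}\|_h.
\]
Raising to the power $r$ produces a factor $p^{-rm}$, while the factor $p^{2m}$ from \eqref{eq:newdensitydiscretenstep2d} cancels against the Jacobian $|p^m|_p^2 = p^{-2m}$ of the scaling. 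A second application of Lemma~\ref{StateSpaces:Intmoverballs2} identifies the remaining integral with the primitive moment, yielding
\[
\EXm\Big[\|\ve{S}^{(m)}_n\|_h^r\Big] = p^{-rm}\,\EXG\big[\|\ve{S}_n\|_h^r\big].
\]
Theorem~\ref{thm:MomExthprim} then bounds the right-hand side by $p^{-rm}Kn^{r/b}$ with $K$ independent of $m$ and $n$, which is the claim.

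I expect the only genuine obstacle to be verifying the homogeneity displayed above, that is, confirming that the diagonal embedding scaling rescales the weighted max-norm uniformly by $p^{-m}$ despite the anisotropy. This is the single point at which the weighting could have interfered with the reduction; once it is in hand, the remaining bookkeeping (the volume normalizations in \eqref{eq:BallmVol-2d} and the tracking of the $p^{2m}$ against the Jacobian) is routine and runs exactly parallel to the max-norm proof.
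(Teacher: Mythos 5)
Your proposal is correct and follows essentially the same route as the paper's proof: convert the sum to an integral over $\Qp^2$ via Lemma~\ref{StateSpaces:Intmoverballs2}, apply the density-scaling identity \eqref{eq:newdensitydiscretenstep2d}, change variables by $\ve{x}\mapsto p^m\ve{x}$, and invoke Theorem~\ref{thm:MomExthprim}. The homogeneity $\|p^m\ve{x}\|_h = p^{-m}\|\ve{x}\|_h$ that you flag as the potential obstacle does hold, since the diagonal scaling multiplies both coordinates' absolute values by $p^{-m}$ and leaves the weight $p^{h-1}$ untouched, exactly as you argue; the paper uses this same fact implicitly.
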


\begin{proof}
Integrate $\|\ve{S}^{(m)}_n\|^r_h$ and use the analog of \eqref{eq:newdensitydiscretenstep2d} in the case where the probabilities are with respect to the fine shells to obtain the equalities
\begin{align*}
\EXm\Big[\|\ve{S}^{(m)}_n\|^r_h\Big] = \int_{(\Gm)^2} \|[\ve{x}]_m\|^r_h\rho_m(n,[\ve{x}]_m) \,{\rm d}[\ve{x}]_m = \int_{\Qp^2} \|[\ve{x}]_m\|^r_h\rho_m\big(n,[\ve{x}]_m\big) \,{\rm d}\ve{x}.%
\end{align*}
Equation~\eqref{eq:newdensitydiscretenstep2d} together with a change of variables implies that 
\begin{align*}
\EXm\Big[\|\ve{S}^{(m)}_n\|^r_h\Big] & = \int_{\Qp^2} \|[\ve{x}]_m\|^r_h\rho_\ast\big(n,[p^{-m}\ve{x}]\big) p^{2m}\,{\rm d}\ve{x}\\%
& = \int_{\Qp^2} \|[\ve{x}]_m\|^r_h\rho_\ast\big(n,[p^{-m}\ve{x}]\big)\,{\rm d}(p^{-m}\ve{x})
 = \int_{\Qp^2} \|[p^{m}\ve{x}]_m\|^r_h\rho_\ast\big(n,[\ve{x}]\big) \,{\rm d}\ve{x}.
 \end{align*}
 Theorem~\ref{thm:MomExthprim} implies that
 \begin{align*}
\EXm\Big[\|\ve{S}^{(m)}_n\|^r_h\Big]  = p^{-rm}\EXG\big[\|\ve{S}_n\|^r_h\big] \leq p^{-rm}Kn^{\frac{r}{b}}.
\end{align*}

\end{proof}

Denote by $\Gamma_m^{(2)}$ and $\iota_m^{(2)}$ the same functions as in the two dimensional max-norm case.  The function $\iota_m^{(2)}$ acts on the fine primitive process $\ve{S}_n$ to produce a continuous time process in $\Qp^2$ by using \eqref{EmbeddedProcessFDD}.  Denote again by $\Pbm$ the probability measure on $F([0,\infty)\colon \Qp^2)$ obtained in this way, and by $\EXm$ the expected value with respect to $\Pbm$.

\begin{theorem}\label{MomEstX1DdifDQp}
For any $m$ in $\No$ and $t$ in $[0,\infty)$, there is a constant $K$ so that \[\EXm\big[\|\ve{Y}_t\|_h^r\big] \leq Kt^{\frac{r}{b}}.\]
\end{theorem}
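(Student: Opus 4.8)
The plan is to follow the one-dimensional argument established earlier (the proof culminating in \eqref{eq:YttoSmExp-1D}) essentially verbatim, replacing the absolute value by the weighted max-norm $\|\cdot\|_h$ and the one-component discrete moment bound by its two-component analog, Proposition~\ref{MomEstX1DdifDh}. The one structural point to isolate first is that the spatial embedding $\Gamma_m^{(2)} = \Gamma_m\times\Gamma_m$ acts coordinatewise and preserves the $p$-adic absolute value in each coordinate. Since the weight $p^{\,h-1}$ attached to the second coordinate in $\|(x_1,x_2)\|_h = \max\!\big(|x_1|_p, p^{\,h-1}|x_2|_p\big)$ is left untouched by this componentwise action, $\Gamma_m^{(2)}$ is an isometry for the weighted max-norm. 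Combined with the concentration of $\Pbm$ on paths that are constant on the intervals $\big[(n-1)\tau_m, n\tau_m\big)$ (the analog of Proposition~\ref{prop:qp:concenration2M}), this isometry yields the identity
\[
\EXm\big[\|\ve{Y}_t\|_h^r\big] = \EXm\Big[\big\|\ve{S}^{(m)}_{\floor{\lambda(m)t}}\big\|_h^r\Big],
\]
which is the anisotropic, two-component counterpart of \eqref{eq:YttoSmExp-1D}.

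First I would record this identity, justifying it only through the isometry property just described together with the constancy of the embedded path on the time intervals. Next I would invoke Proposition~\ref{MomEstX1DdifDh} with $n = \floor{\lambda(m)t}$ to produce a constant $K'$, independent of $m$ and $n$, so that the right-hand side is at most $p^{-rm}K'\big(\floor{\lambda(m)t}\big)^{r/b}$. Finally I would substitute the specialization $\lambda(m) = Dp^{mb}$ and use $\floor{\lambda(m)t}\le \lambda(m)t$ together with the positivity of $r/b$ to bound this by
\[
p^{-rm}K'\big(Dp^{mb}t\big)^{r/b} = K'D^{r/b}\,t^{r/b},
\]
so that $K = K'D^{r/b}$ is the desired constant; here the factors $p^{-rm}$ and $p^{\,mb\cdot r/b}=p^{\,rm}$ cancel exactly.

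I expect no serious obstacle: the computation is a direct transcription of the one-dimensional proof, and indeed of the isotropic case recorded in Theorem~\ref{MomEstX2DMdifDQp}. The single point deserving genuine attention is the isometry claim for $\Gamma_m^{(2)}$ in the weighted max-norm, since the anisotropic weighting is the only structural departure from the max-norm setting. That claim, however, reduces immediately to the coordinatewise preservation of $|\cdot|_p$ under $\Gamma_m$, after which the cancellation of the scaling factors proceeds precisely as in the isotropic argument.
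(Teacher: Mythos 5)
Your proposal is correct and follows essentially the same route as the paper: the identity $\EXm\big[\|\ve{Y}_t\|_h^r\big] = \EXm\big[\|\ve{S}^{(m)}_{\floor{\lambda(m)t}}\|_h^r\big]$ (the paper cites \eqref{eq:YttoSmExp-1D}), then Proposition~\ref{MomEstX1DdifDh}, then the substitution $\lambda(m)=Dp^{mb}$ with $\floor{\lambda(m)t}\le\lambda(m)t$ to cancel the powers of $p$. Your explicit justification of the identity via the $\|\cdot\|_h$-isometry of $\Gamma_m^{(2)}$ is a point the paper leaves implicit, but it is the intended reasoning, not a different argument.
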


\begin{proof}
Proposition~\ref{MomEstX1DdifDh} and \eqref{eq:YttoSmExp-1D} together imply that there is a constant $K^\prime$ so that %
\begin{align*}
\EXm\big[\|\ve{Y}_t\|^r_h\big] & = \EXm\Big[\big\|\ve{S}^{(m)}_{\floor{\lambda(m) t}}\big\|^r_h\Big]\\
& \leq p^{-rm}K^\prime(\floor{\lambda(m)t})^{\frac{r}{b}}\\
& \leq p^{-rm}K^\prime(Dp^{mb}t)^{\frac{r}{b}} = K^\prime D^{\frac{r}{b}}t^{\frac{r}{b}}.
\end{align*}

\end{proof}

The proofs of Propositions~\ref{tight:2D} and \ref{prop:qp:concenration2} follow the same reasoning as those of Propositions~\ref{tight:1D} and \ref{prop:qp:concenration}.

\begin{proposition}\label{tight:2D}
The stochastic process $(F([0,\infty)\colon \Qp^2), \Pbm, \ve{Y})$ has a version with paths in $D([0,\infty)\colon \Qp^2)$, a process $(D([0,\infty)\colon \Qp^2), \Pbm, \ve{Y})$.  The sequence of measures $(\Pbm)$ with paths in $D([0,\infty)\colon \Qp^2)$ is uniformly tight.
\end{proposition}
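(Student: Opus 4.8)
The plan is to follow the one-component template verbatim: extract from the uniform moment estimate a Chentsov-type bound on the joint moment of two consecutive increments, invoke the criterion of Chentsov (as in \cite[Theorem~4.6]{WJPA} and \cite{cent}) to obtain uniform tightness in $D([0,\infty)\colon\Qp^2)$, and separately observe that each $\Pbm$ is already carried by piecewise-constant (hence c\`adl\`ag) paths so that the $D$-valued version exists. Throughout, the metric defining the $J_1$ topology may be taken to be $\|\cdot\|_h$; since $p^{h-1}\|\cdot\|_{\max}\le\|\cdot\|_h\le\|\cdot\|_{\max}$ for $h$ in $(0,1]$, tightness is insensitive to whether $\|\cdot\|_h$ or $\|\cdot\|_{\max}$ is used.

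For the increment bound, fix $s<t$ and note that $\ve{Y}_t-\ve{Y}_s$ equals $\ve{S}^{(m)}_{\floor{\lambda(m)t}}-\ve{S}^{(m)}_{\floor{\lambda(m)s}}$, which by the independence and stationarity of the increments $\ve{X}_i^{(m)}$ has the same law as $\ve{S}^{(m)}_{N}$ with $N=\floor{\lambda(m)t}-\floor{\lambda(m)s}$. Proposition~\ref{MomEstX1DdifDh} then gives $\EXm[\|\ve{Y}_t-\ve{Y}_s\|_h^r]\le Kp^{-rm}N^{r/b}$, and since $N\le \lambda(m)(t-s)+1=Dp^{mb}(t-s)+1$ the spatial factor $p^{-rm}$ cancels the time scaling, yielding a bound of the form $K'(t-s)^{r/b}$ uniformly in $m$ once $t-s$ exceeds $1/\lambda(m)$; equivalently one reads this off Theorem~\ref{MomEstX1DdifDQp} directly using stationarity of increments. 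The residual small-increment range is absorbed by the piecewise-constant path structure discussed below.

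The heart of the argument is the application of Chentsov's criterion. For $t_1\le t\le t_2$ the increments $\ve{Y}_t-\ve{Y}_{t_1}$ and $\ve{Y}_{t_2}-\ve{Y}_t$ are built from disjoint blocks of the i.i.d.\ summands and are therefore independent, so the joint moment factors and the increment bound gives
\[
\EXm\big[\|\ve{Y}_t-\ve{Y}_{t_1}\|_h^r\,\|\ve{Y}_{t_2}-\ve{Y}_t\|_h^r\big]\le (K')^2 (t-t_1)^{r/b}(t_2-t)^{r/b}\le (K')^2(t_2-t_1)^{2r/b}.
\]
With $F(t)=Ct$ this verifies the hypotheses of the Chentsov criterion uniformly in $m$, giving uniform tightness of $(\Pbm)$. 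The c\`adl\`ag version is then immediate from the construction: exactly as in Proposition~\ref{prop:qp:concenration2M}, the measure $\Pbm$ is concentrated on paths valued in $\Gamma_m^{(2)}(\G^2)$ that are constant on each interval $[(n-1)\tau_m,n\tau_m)$, hence right-continuous with left limits, which both supplies the $D([0,\infty)\colon\Qp^2)$-version and disposes of the small-increment range left open above.

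The main obstacle is the conceptual point just used rather than any hard estimate: a single increment is only controlled to order $r/b<1$ in time, so uniform tightness \emph{cannot} come from a naive Kolmogorov continuity bound. It is precisely the two-increment form of Chentsov's criterion, together with the independence of increments, that doubles the time exponent to $2r/b$; choosing $r$ in $(b/2,b)$ — a nonempty admissible range by Proposition~\ref{MomEstX1DdifDh} — makes $2r/b>1$ and closes the argument.
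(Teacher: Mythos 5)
Your proposal is correct and follows essentially the same route as the paper, which proves this proposition by deferring to the one-component argument (Proposition~\ref{tight:1D}): verify Chentsov's criterion \cite{cent} from the uniform moment bound of Theorem~\ref{MomEstX1DdifDQp}, exactly as in \cite[Theorem~4.6]{WJPA}, with the c\`adl\`ag version coming from the piecewise-constant path structure. Your write-up simply makes explicit the details the paper leaves to the cited reference — the factorization of the two-increment moment via independence, the choice $r\in(b/2,b)$ to get exponent $2r/b>1$, and the disposal of increments shorter than $\tau_m$ by path constancy — all of which are the intended steps.
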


\begin{proposition}\label{prop:qp:concenration2}
The measure $\Pbm$ is concentrated on the subset of paths in $D([0, \infty)\colon \mathds Q_p^2)$ that are valued in $\Gamma_m^{(2)}(\G^2)$ and are, for each natural number $n$, constant on the intervals $\big[(n-1)\tau_m, n\tau_m\big)$.
\end{proposition}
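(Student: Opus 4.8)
The plan is to follow the one-dimensional argument behind Proposition~\ref{prop:qp:concenration} (compare \cite[Theorem~4.7]{WJPA}), exploiting the fact that the spatiotemporal embedding $\iota_m^{(2)}$ simultaneously discretizes time and lands in the image of $\Gamma_m^{(2)}$. The starting point is that, by the defining relation \eqref{EmbeddedProcessFDD}, the finite-dimensional distributions of $(D([0,\infty)\colon \Qp^2),\Pbm,\ve{Y})$ factor through the fine primitive walk: for times $t_1,\dots,t_k$ and Borel sets $U_1,\dots,U_k$,
\[
\Pbm\big(\ve{Y}_{t_1}\in U_1,\dots,\ve{Y}_{t_k}\in U_k\big)=\PbG\big(\ve{S}_{\floor{\lambda(m)t_1}}\in (\Gamma_m^{(2)})^{-1}(U_1),\dots,\ve{S}_{\floor{\lambda(m)t_k}}\in (\Gamma_m^{(2)})^{-1}(U_k)\big),
\]
using that $\tau_m=1/\lambda(m)$ and that $\Gamma_m^{(2)}$ is injective. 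In particular the law of $\ve{Y}_t$ is the pushforward under $\Gamma_m^{(2)}$ of the law of $\ve{S}_{\floor{\lambda(m)t}}$ on the countable group $\G^2$, hence a discrete measure carried by the countable (so Borel) set $\Gamma_m^{(2)}(\G^2)$; thus $\ve{Y}_t\in\Gamma_m^{(2)}(\G^2)$ almost surely for each \emph{fixed} $t$. Likewise, since $t\mapsto\floor{\lambda(m)t}$ is constant and equal to $n-1$ on $\big[(n-1)\tau_m,n\tau_m\big)$, for any $s,t$ in a common such interval the pair $(\ve{Y}_s,\ve{Y}_t)$ has the law of $\big(\Gamma_m^{(2)}(\ve{S}_{n-1}),\Gamma_m^{(2)}(\ve{S}_{n-1})\big)$ (the two routes collapse to one discrete time, with the route values intersected), which is supported on the diagonal of $\Qp^2\times\Qp^2$; as $\Qp^2$ is Polish the diagonal is closed, so $\ve{Y}_s=\ve{Y}_t$ almost surely.

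These are statements at finitely many fixed times, and the remaining work is to upgrade them to one almost-sure statement about the whole path. First I would fix a countable set $T$ consisting of the left endpoints $(n-1)\tau_m$ together with a countable dense subset of each interval $\big[(n-1)\tau_m,n\tau_m\big)$. Intersecting the countably many events ``$\ve{Y}_t\in\Gamma_m^{(2)}(\G^2)$'' for $t\in T$ with the countably many events ``$\ve{Y}_s=\ve{Y}_t$'' for pairs $s,t\in T$ lying in a common interval yields a single event of full $\Pbm$-measure on which $\ve{Y}$ takes values in $\Gamma_m^{(2)}(\G^2)$ along $T$ and is constant on $T\cap\big[(n-1)\tau_m,n\tau_m\big)$ for every $n$. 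On this event I would invoke right-continuity of the c\`adl\`ag paths furnished by Proposition~\ref{tight:2D}: a right-continuous function constant on a dense subset of $\big[(n-1)\tau_m,n\tau_m\big)$ is constant on the whole interval, with common value $\ve{Y}_{(n-1)\tau_m}\in\Gamma_m^{(2)}(\G^2)$. This delivers both assertions of the proposition at once.

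The main obstacle is precisely this passage from countably many fixed-time almost-sure equalities to a statement holding at \emph{all} (uncountably many) times, since a naive intersection over every $t$ is unavailable. The resolution is structural rather than analytic: the discretization of time built into $\iota_m^{(2)}$ forces the path to be piecewise constant, so its behavior is determined on a countable set of times, and right-continuity of the Skorokhod version then propagates the fixed-time conclusions across each interval. No new estimates are required beyond the c\`adl\`ag regularity already established in Proposition~\ref{tight:2D}.
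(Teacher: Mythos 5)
Your proof is correct and takes essentially the approach the paper intends: the paper omits this proof, deferring to the one-dimensional Proposition~\ref{prop:qp:concenration} and ultimately to \cite[Theorem~4.7]{WJPA}, and your argument---factoring finite-dimensional distributions through the discrete walk via \eqref{EmbeddedProcessFDD} to get the fixed-time and equal-time statements, then upgrading to a statement about whole paths via a countable dense set of times and right-continuity of the c\`adl\`ag version from Proposition~\ref{tight:2D}---is precisely that standard argument, carried out in full detail.
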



\subsection{The limiting processes}\label{subsec:LimitProcess2Dh}


Define the function $M_{h,b}$ for any $\y$ in $\Qp^2$ by \[M_{h,b}(\y) = \alpha(\y)\|\y\|_h^b.\]  The anisotropic Vladimirov operator on $L^2(\Qp^2)$ is given by
\[
\Delta_{h,b} f = \mathcal F^{-1}\big(M_{h,b}\widehat f\;\big)
\]
and has an associated diffusion equation%
\begin{equation}\label{Intro:HeatEquation2dh}
\begin{cases}
\displaystyle \frac{\d}{\d t}\,\rho(t,\ve{x}) = -\sigma\Delta_{h,b}\,\rho(t,\ve{x})\\[4pt]
\rho(0,\cdot)=\delta_0.
\end{cases}
\end{equation}
As in the one dimensional case, the operator $\Delta_{h,b}$ is self-adjoint on its domain \[\mathcal D(\Delta_{h,b}) = \Big\{f\in L^2(\Qp^2)\colon M_{h,b}\widehat f\in L^2(\Qp^2)\Big\}.\] Extend $\Delta_{h,b}$ as in the one dimensional case to act on functions of a non-negative time variable and spatial variable.

The heat kernel
\[
\rho(t,\ve{x})=\big(\mathcal F^{-1} \e^{-\sigma t\alpha(\cdot)\|\cdot\|_h^{b}}\big)(\ve{x})
\]
is the fundamental solution to \eqref{Intro:HeatEquation2dh}, and $\{\rho(t,\cdot)\}_{t>0}$ is a convolution semigroup of probability densities.  Showing that this semigroup determines a probability measure $\Pb$ on $D([0,\infty)\colon\Qp^2)$, concentrated on paths that are initially at $\ve{0}$, requires a more involved argument than in the one dimensional \cite{Varadarajan:LMP:1997} or two dimensional max-norm setting.

\begin{proposition}
For any positive real number $t$, the family $\{\rho(t,\cdot)\}_{t>0}$ forms a convolution semigroup of probability density functions that defines a measure $\Pbm$ on $F([0,\infty)\colon \Qp^2)$.  Furthermore, for each $\ve{x}$, \[\rho(t,\ve{x}) \leq p\sum_{k\in\ZZ} \Big(\e^{-\sigma t\alpha_0p^{kb}}-\e^{-\sigma t\alpha_0p^{(k+1)b}}\Big) p^{-kr}.\]
\end{proposition}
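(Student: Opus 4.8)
The plan is to work on the Fourier side and then invert. Writing $M_{h,b}(\ve{y}) = \alpha(\ve{y})\|\ve{y}\|_h^b \geq 0$, the kernel satisfies $\Fourier\rho(t,\cdot) = \e^{-\sigma t M_{h,b}}$. Because the Fourier transform carries convolution to pointwise product and $\e^{-\sigma s M_{h,b}}\e^{-\sigma t M_{h,b}} = \e^{-\sigma(s+t)M_{h,b}}$, the semigroup identity $\rho(s,\cdot)\ast\rho(t,\cdot)=\rho(s+t,\cdot)$ is immediate. Evaluating $\Fourier\rho(t,\cdot)$ at $\ve{0}$ gives total mass $\e^{-\sigma t M_{h,b}(\ve{0})}=1$, since $M_{h,b}(\ve{0})=0$. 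A shellwise comparison with Lemma~\ref{lemma:continuousEstimateMoment} shows $\e^{-\sigma t M_{h,b}}$ lies in $L^1(\Qp^2)\cap L^2(\Qp^2)$, so the inverse transform defines $\rho(t,\cdot)$ as a genuine continuous function and licenses the interchanges below.

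Next I would produce the explicit series. Decomposing the defining integral over the fine shells $\Shelltf(j)$, on which both $\alpha(\ve{y})$ and $\|\ve{y}\|_h$ are constant, Proposition~\ref{Prop:DualityRadius:Continuous} gives $M_{h,b}=\alpha_0 p^{kb}$ on $\Shelltf(2k)$ and $M_{h,b}=\alpha_1 p^{(k+h)b}$ on $\Shelltf(2k+1)$; write $c_j$ for this constant value. Using $\int_{\Shelltf(j)}\langle\ve{x},\ve{y}\rangle\,\d\ve{y}=\int_{\Ballt(j)}-\int_{\Ballt(j-1)}$ with Lemma~\ref{lem:LCAG:Int}, the identifications $\Ballt(j)^\perp=\Ballt(-j)$ from \eqref{Eq:DualBallnoe:identification:qp2} and \eqref{Eq:DualBalle:identification:qp2}, and $\Vol(\Ballt(j))=p^{j}$, an Abel summation (whose boundary terms vanish because $\e^{-\sigma t c_j}$ decays super-exponentially as $j\to\infty$, while $p^{j}\to0$ as $j\to-\infty$) yields
\[
\rho(t,\ve{x}) = \sum_{j\in\ZZ}\big(\e^{-\sigma t c_j}-\e^{-\sigma t c_{j+1}}\big)\,p^{j}\,\indicator{\Ballt(-j)}(\ve{x}).
\]

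The main obstacle is positivity. In the isotropic case (Proposition~\ref{Prop:Density:2dM}) the exponents $\alpha_{\max}p^{kb}$ increase in $k$, so every coefficient is nonnegative; here the alternating two-scale structure makes $j\mapsto c_j$ \emph{non-monotone}, since the finer odd shells may carry a smaller exponent than the adjacent even ones, and the coefficients $\e^{-\sigma t c_j}-\e^{-\sigma t c_{j+1}}$ need not be individually nonnegative. I would therefore argue nonnegativity at the level of partial sums: for fixed $\ve{x}$ only the indices $j\leq J(\ve{x})$ contribute, and reindexing gives $\rho(t,\ve{x}) = (1-p^{-1})\sum_{j\leq J}\e^{-\sigma t c_j}p^{j}-\e^{-\sigma t c_{J+1}}p^{J}$, which I would show is nonnegative using that the even subsequence $c_{2k}=\alpha_0 p^{kb}$ and the odd subsequence $c_{2k+1}=\alpha_1 p^{(k+h)b}$ are each strictly increasing and interleave. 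The cleaner fallback, should the direct estimate prove delicate, is to identify $\rho(t,\cdot)$ as the pointwise limit of the rescaled discrete laws of Proposition~\ref{Prop:FormulaForChar:WtCase}, which are genuine probability mass functions, so nonnegativity passes to the limit. Together with unit mass and the semigroup identity this makes $\{\rho(t,\cdot)\}_{t>0}$ a convolution semigroup of probability densities, and the standard construction via the Kolmogorov extension theorem then produces the measure $\Pbm$ on $F([0,\infty)\colon\Qp^2)$.

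Finally, for the stated bound I would reuse the series. Once the partial sums are known nonnegative, dropping the indicators gives $\rho(t,\ve{x})\leq\sum_{j}\big(\e^{-\sigma t c_j}-\e^{-\sigma t c_{j+1}}\big)p^{j}$. Grouping the fine indices into coarse blocks $\{2k,2k+1\}$, bounding the smaller even weight by the odd weight, and telescoping so that the odd-shell exponent $c_{2k+1}$ cancels, collapses the right-hand side to a coarse sum in which only $\alpha_0$ appears and an overall factor $p$ emerges, which is exactly the right-hand side of the asserted inequality; this collapse needs only monotonicity of the even subsequence $c_{2k}=\alpha_0 p^{kb}$, not the full monotonicity that fails above. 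The same coarse $\alpha_0$-governed expression is precisely the sum that Lemma~\ref{lemma:continuousEstimateMoment} controls, so it feeds directly into the subsequent moment estimates.
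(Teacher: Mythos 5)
Your skeleton coincides with the paper's: Fourier inversion for the semigroup law and unit mass, a fine-shell decomposition with summation by parts for the explicit series, and a block-and-telescope step for the bound. The paper closes the one point you flag, positivity, in a single line: it asserts that every coefficient $\e^{-\sigma t c_j}-\e^{-\sigma t c_{j+1}}$ is nonnegative, i.e.\ that $j\mapsto c_j$ \emph{is} monotone, and this is provable rather than accidental. The multiplier $M_{h,b}=\alpha(\cdot)\|\cdot\|_h^b$ restricted to $\Zp^2$ equals $1-\phi_{\ve X}$ by Proposition~\ref{prop:charh}, and the shell decomposition of the characteristic function gives, for $\y$ in $\Shelltf(-J)$,
\[
1-\phi_{\ve X}(\y)=\ConTd\Big(\tfrac{p}{p-1}\,r_{J+1}^{-b}+\sum\nolimits_{i\ge J+2} r_i^{-b}\Big),
\]
which strictly decreases in $J$ because $(r_i)$ increases; combined with the scaling $M_{h,b}(p^{-1}\y)=p^{b}M_{h,b}(\y)$, this yields $c_j\le c_{j+1}$ for all $j$, equivalently the two fixed inequalities $\alpha_0\le\alpha_1 p^{hb}\le\alpha_0 p^{b}$. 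Your suspicion of non-monotonicity is understandable: inserting the paper's displayed formula for $\alpha_1$ does violate the first inequality (e.g.\ $p=2$, $b=1$, $h=\tfrac12$ gives $\alpha_0\approx 1.29$ but $\alpha_1p^{hb}\approx 0.91$). That formula, however, is paired in the paper with the identification $\|\y\|_h=p^{\,1-h-k}$ on the odd dual shells, whereas the stated definition of $\|\cdot\|_h$ gives $p^{\,h-1-k}$; only the product $\alpha(\y)\|\y\|_h^b$ ever enters, and the product values actually computed in Proposition~\ref{prop:charh} are monotone. The missing idea in your write-up is precisely this monotonicity proof; with it, the paper's termwise argument gives positivity and the stated bound at once.

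As written, your workaround does not close. The partial-sum positivity argument is only a plan, and its named ingredients do not suffice: comparing against same-parity exponents alone recovers only $(1-p^{-1})\sum_{i\ge0}p^{-2-2i}=\tfrac{1}{p+1}$ of the required mass $p^{J}\e^{-\sigma t c_{J+1}}$, so a cross-parity comparison, exactly what you disavow, is unavoidable. (Your fallback, positivity as a pointwise limit of the rescaled discrete laws, is valid and non-circular, since Proposition~\ref{prop:unifcon1-d} does not depend on the present proposition; that would rescue positivity.) More seriously, the final collapse fails under your own hypothesis: bounding $p^{2k}\big(\e^{-\sigma t c_{2k}}-\e^{-\sigma t c_{2k+1}}\big)$ above by $p^{2k+1}\big(\e^{-\sigma t c_{2k}}-\e^{-\sigma t c_{2k+1}}\big)$ is legitimate only when the difference is nonnegative, i.e.\ only when $c_{2k}\le c_{2k+1}$; this is the even-to-odd comparison, not merely monotonicity of the even subsequence as you claim. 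So under non-monotonicity neither positivity nor the inequality is established, while once monotonicity is proved your detour reduces to the paper's proof.
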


\begin{proof}

Write the inverse Fourier transform of the exponential as an integral over fine shells to obtain the equalities
\begin{align*}
\rho(t,\ve{x}) &= \big(\mathcal F^{-1} \e^{-\sigma t\alpha(\cdot)\|\cdot\|_h^{b}}\big)(\ve{x}) \\ &= \int_{\Qp^2}\langle -\ve{x}, \ve{y}\rangle \e^{-\sigma t\alpha(\ve{y})\|\ve{y}\|_h^{b}}\,{\rm d}\ve{y} = \sum_{k\in\ZZ} \e^{-\sigma t\alpha(k)r_k^{b}}\int_{\Shelltf(k)}\langle -\ve{x}, \ve{y}\rangle\,{\rm d}\ve{y}.
\end{align*}
Reorder terms to obtain the equalities
\begin{align}\label{vsvAdapted2Dh}
\rho(t,\ve{x}) &= \sum_{k\in\ZZ} \e^{-\sigma t\alpha(k)r_k^{b}}\Bigg(\int_{\Ballt(k)}\langle -\ve{x}, \ve{y}\rangle\,{\rm d}\ve{y}-\int_{\Ballt(k-1)}\langle -\ve{x}, \ve{y}\rangle\,{\rm d}\ve{y}\Bigg)\notag\\
&= \sum_{k\in\ZZ} \Big(\e^{-\sigma t\alpha(k)r_k^{b}}-\e^{-\sigma t\alpha(k+1)r_{k+1}^{b}}\Big)\int_{\Ballt(k)}\langle -\ve{x}, \ve{y}\rangle\,{\rm d}\ve{y}\notag\\
&=  \sum_{k\in\ZZ} p^{2k}\Big(\e^{-\sigma t\alpha_0p^{kb}}-\e^{-\sigma t\alpha_1p^{(k+h)b}}\Big)\indicator{\Ballt(-2k)}(\ve{x})\notag\\
& \qquad + \sum_{k\in\ZZ} p^{2k+1}\Big(\e^{-\sigma t\alpha_1p^{(k+h)b}}-\e^{-\sigma t\alpha_0p^{(k+1)b}}\Big)\indicator{\Ballt(-2k-1)}(\ve{x}).
\end{align}
For each positive $t$, each term of the sum that defines $\rho(t, \ve{x})$ is non-negative, so $\rho(t, \ve{x})$ is non-negative.  

Positivity of the differences in the sum in \eqref{vsvAdapted2Dh} implies that 
\begin{align}\label{vsvAdapted2Dh:Est}
\rho(t,\ve{x}) &\leq  \sum_{k\in\ZZ} p^{2k+1}\Big(\e^{-\sigma t\alpha_0p^{kb}}-\e^{-\sigma t\alpha_1p^{(k+h)b}}\Big)\indicator{\Ballt(-2k)}(\ve{x})\notag\\
& \qquad + \sum_{k\in\ZZ} p^{2k+1}\Big(\e^{-\sigma t\alpha_1p^{(k+h)b}}-\e^{-\sigma t\alpha_0p^{(k+1)b}}\Big)\indicator{\Ballt(-2k)}(\ve{x})\notag\\
&=  p\sum_{k\in\ZZ} p^{2k}\Big(\e^{-\sigma t\alpha_0p^{kb}}-\e^{-\sigma t\alpha_0p^{(k+1)b}}\Big)\indicator{\Ballt(-2k)}(\ve{x}).
\end{align}

The function $\rho(t, \cdot)$ has unit mass as the inverse Fourier transform of a function that is $1$ at the origin, and so is a probability density function.  Since the Fourier transform takes products to convolutions, the family forms a semigroup under the convolution operation.  
\end{proof}

Take $\Pb$ to be the measure on $F([0,\infty)\colon \Qp^2)$ that $\{\rho(t,\cdot)\}_{t>0}$ induces by \eqref{eq:ProbDetermination}, and take $\EX$ to be the expected value with respect to the probability density function.

\begin{proposition}
For any $r$ in $(0,b)$, \[\EX[\|\ve{Y}_t\|_h^r] \leq  p^{-kr} \leq p^{1+r}a^{\frac{r}{b}}\Gamma\Big(1-\frac{r}{b}\Big).\]
\end{proposition}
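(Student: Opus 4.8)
The plan is to mirror the isotropic computation of Proposition~\ref{Prop:EXY:cont2D}, but to replace the exact density identity there by the pointwise upper bound on $\rho(t,\cdot)$ established immediately above and then to close with Lemma~\ref{lemma:continuousEstimateMoment}. First I would write the moment as an integral of $\|\ve{x}\|_h^r$ against the density and insert that bound, giving
\[
\EX[\|\ve{Y}_t\|_h^r]\;\leq\; p\sum_{k\in\ZZ} p^{2k}\Big(\e^{-\sigma t\alpha_0p^{kb}}-\e^{-\sigma t\alpha_0p^{(k+1)b}}\Big)\int_{\Ballt(-2k)} \|\ve{x}\|_h^r\,\d\ve{x}.
\]
The task then reduces to estimating the integral of the anisotropic norm over each coarse ball $\Ballt(-2k)$.

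The key step, and the only place where the hypothesis $h<1$ genuinely enters, is the comparison of the weighted max-norm with the coarse (max-norm) balls that carry the density estimate. On $\Ballt(-2k)=p^{k}\Zp\times p^{k}\Zp$ one has $|x_1|_p\le p^{-k}$ and $|x_2|_p\le p^{-k}$, and since $p^{h-1}<1$,
\[
\|\ve{x}\|_h=\max\big(|x_1|_p,\,p^{h-1}|x_2|_p\big)\le p^{-k}.
\]
Hence $\int_{\Ballt(-2k)}\|\ve{x}\|_h^r\,\d\ve{x}\le p^{-kr}\Vol(\Ballt(-2k))=p^{-kr-2k}$; the factor $p^{2k}$ cancels the ball volume, and I am left with
\[
\EX[\|\ve{Y}_t\|_h^r]\;\leq\; p\sum_{k\in\ZZ}\Big(\e^{-\sigma t\alpha_0p^{kb}}-\e^{-\sigma t\alpha_0p^{(k+1)b}}\Big)p^{-kr}.
\]
Finally I would apply Lemma~\ref{lemma:continuousEstimateMoment} with $a=\sigma t\alpha_0$ to bound the remaining sum by $p^{r}(\sigma t\alpha_0)^{r/b}\Gamma(1-\frac{r}{b})$, producing
\[
\EX[\|\ve{Y}_t\|_h^r]\;\leq\; p^{1+r}(\sigma t\alpha_0)^{\frac{r}{b}}\Gamma\Big(1-\frac{r}{b}\Big),
\]
which is of the form $K\,t^{r/b}$ required for the Chentsov criterion and subsequent tightness argument.

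I do not expect a serious obstacle, since the argument is a direct adaptation of the isotropic case; the one point deserving care is precisely the norm comparison just used. The pointwise density bound is phrased in terms of the max-norm balls $\Ballt(-2k)$, whereas the moment is measured in the anisotropic norm $\|\cdot\|_h$, and it is the elementary inequality $p^{h-1}\le 1$ for $h$ in $(0,1]$ that guarantees the passage from the coarse balls to the $h$-norm incurs no $h$-dependent constant. Thus the estimate retains the same shape as the isotropic one, differing only by the harmless prefactor $p$ inherited from the density bound.
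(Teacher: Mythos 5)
Your proof is correct and takes essentially the same approach as the paper: both start from $\EX[\|\ve{Y}_t\|_h^r]=\int_{\Qp^2}\|\ve{x}\|_h^r\rho(t,\ve{x})\,\d\ve{x}$, insert the pointwise bound $\rho(t,\ve{x})\le p\sum_{k\in\ZZ}p^{2k}\big(\e^{-\sigma t\alpha_0p^{kb}}-\e^{-\sigma t\alpha_0p^{(k+1)b}}\big)\indicator{\Ballt(-2k)}(\ve{x})$ from the preceding proposition, and close with Lemma~\ref{lemma:continuousEstimateMoment} at $a=\sigma t\alpha_0$. The only, harmless, difference is that you bound $\|\ve{x}\|_h^r$ by its supremum $p^{-kr}$ over each coarse ball, whereas the paper first replaces $\|\cdot\|_h$ by $\|\cdot\|_{\max}$ and integrates exactly over $\Ballt(-2k)$, getting a slightly sharper constant; both estimates yield the stated bound $p^{1+r}a^{\frac{r}{b}}\Gamma\big(1-\tfrac{r}{b}\big)$.
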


\begin{proof}
For any $r$ in $(0,b)$, Equation~\eqref{vsvAdapted2Dh:Est} implies that%
\begin{align}\label{EXY:cont2D}
\EX[\|\ve{Y}_t\|_h^r] & =  \int_{\Qp^2} \|\ve{x}\|_h^r\rho(t,\ve{x})\,{\rm d}\ve{x}\notag\\
& \leq  p\int_{\Qp^2} \|\ve{x}\|_h^r\sum_{k\in\ZZ} p^{2k}\Big(\e^{-\sigma t\alpha_0p^{kb}}-\e^{-\sigma t\alpha_0p^{(k+1)b}}\Big)\indicator{\Ballt(-2k)}(\ve{x})\,{\rm d}\ve{x}\notag\\
& = p\sum_{k\in\ZZ} p^{2k}\Big(\e^{-\sigma t\alpha_0p^{kb}}-\e^{-\sigma t\alpha_0p^{(k+1)b}}\Big)\int_{\Qp^2} \|\ve{x}\|_h^r\indicator{\Ballt(-2k)}(\ve{x})\,{\rm d}\ve{x}\notag\\
& \leq p\sum_{k\in\ZZ} p^{2k}\Big(\e^{-\sigma t\alpha_0p^{kb}}-\e^{-\sigma t\alpha_0p^{(k+1)b}}\Big)\int_{\Qp^2} \|\ve{x}\|_{\max}^r\indicator{\Ballt(-2k)}(\ve{x})\,{\rm d}\ve{x}\notag\\
& = p^{r+1}\frac{p^2-1}{p^{r+2}-1}\sum_{k\in\ZZ} \Big(\e^{-\sigma t\alpha_0p^{kb}}-\e^{-\sigma t\alpha_0p^{(k+1)b}}\Big) p^{-kr}.
\end{align}

Take \[a = \sigma t\alpha_0\] so that Lemma~\ref{lemma:continuousEstimateMoment} and \eqref{EXY:cont2D} together imply the proposition.

\end{proof}

\subsection{Convergence of the processes}

For each $t$ in $(0, \infty)$, define the sequence $(t_m)$ as before in the one dimensional setting. 

\begin{proposition}\label{prop:1-dChar}
For any $\y$ in $p^{-m}\Zp^2$, the characteristic function $\phi_{\ve{X}^{(m)}}$ for $\ve{X}^{(m)}$ is given by the equality 
\[\phi_{\ve{X}^{(m)}}(\y) = 1 - \frac{\alpha_h(\y)\|\y\|_h^b}{p^{mb}}.\]
\end{proposition}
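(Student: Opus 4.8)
The plan is to mirror the max-norm computation of Proposition~\ref{prop:2-dMChar}, reducing the discrete characteristic function to an evaluation of $\phi_{\ve{X}}$ at the rescaled argument $p^m\y$ and then invoking Proposition~\ref{prop:charh}. First I would write
\[
\phi_{\ve{X}^{(m)}}(\y) = \int_{(\Gm)^2} \langle -[\ve{x}]_m, \y\rangle_m\,\rho_{\ve{X}^{(m)}}([\ve{x}]_m)\,\d[\ve{x}]_m,
\]
and use Lemma~\ref{StateSpaces:Intmoverballs2} to lift this integral over $(\Gm)^2$ to an integral over $\Qp^2$ against $\d\ve{x}$. The generator density obeys $\rho_{\ve{X}^{(m)}}([p^m\ve{x}]_m) = p^{2m}\rho_{\ve{X}}([\ve{x}])$ by~\eqref{eq:newdensitydiscrete2d}, so after the lift the integrand is expressed in terms of the primitive density $\rho_{\ve{X}}$.

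Next I would substitute $\ve{x}\mapsto p^m\ve{x}$. Two effects occur in tandem: the two-dimensional additive Haar measure scales as $\d(p^m\ve{x}) = p^{-2m}\,\d\ve{x}$, so the factor $p^{2m}$ from the density is exactly absorbed; and the pairing transforms via~\eqref{eq:pairing-2d} by moving the scalar $p^m$ from one argument to the other, $\langle -[p^m\ve{x}]_m, \y\rangle_m = \chi\big(-p^m(x_1y_1 + x_2y_2)\big) = \langle -[\ve{x}], p^m\y\rangle$. The integral therefore collapses to $\phi_{\ve{X}}(p^m\y)$. Since $\y$ lies in $p^{-m}\Zp^2$, the argument $p^m\y$ lies in $\Zp^2$, so Proposition~\ref{prop:charh} applies and yields $\phi_{\ve{X}}(p^m\y) = 1 - \alpha_h(p^m\y)\,\|p^m\y\|_h^b$.

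It remains to rewrite this in terms of $\y$. The norm scales cleanly: directly from the definition of $\|\cdot\|_h$,
\[
\|p^m\y\|_h = \max\big(|p^my_1|_p,\, p^{h-1}|p^my_2|_p\big) = p^{-m}\|\y\|_h,
\]
so $\|p^m\y\|_h^b = p^{-mb}\|\y\|_h^b$. The essential point — the step I expect to carry the real content — is the scale invariance $\alpha_h(p^m\y) = \alpha_h(\y)$. Because $\alpha_h$ takes the value $\alpha_0$ on even fine shells $\Shelltf(-2k)$ and $\alpha_1$ on odd fine shells $\Shelltf(-2k-1)$, I must check that dilation by $p^m$ preserves the parity of the fine-shell index. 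Proposition~\ref{Prop:DualityRadius:Continuous} shows that the radius attached to index $j$ is multiplied by exactly $p^{-m}$ when $j$ is decreased by $2m$, in both the even and odd cases; hence $p^m\Shelltf(j)=\Shelltf(j-2m)$, an even shift, which preserves parity and gives $\alpha_h(p^m\y) = \alpha_h(\y)$. Assembling the two facts,
\[
\phi_{\ve{X}^{(m)}}(\y) = 1 - \alpha_h(\y)\,p^{-mb}\|\y\|_h^b = 1 - \frac{\alpha_h(\y)\|\y\|_h^b}{p^{mb}},
\]
as claimed. The main obstacle is thus not the Fourier bookkeeping, which is routine, but confirming that the anisotropic weight $\alpha_h$ is genuinely constant along the $p^m$-dilation orbit; this rests on the evenness of the index shift, which is precisely where the two-scale filtration structure enters.
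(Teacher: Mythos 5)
Your proof is correct and follows essentially the same route as the paper's: lift the integral to $\Qp^2$ via Lemma~\ref{StateSpaces:Intmoverballs2}, apply the density relation \eqref{eq:newdensitydiscrete2d}, change variables so that the pairing identity collapses the integral to $\phi_{\ve{X}}(p^m\y)$, and invoke Proposition~\ref{prop:charh}. The only difference is that you explicitly verify the norm scaling $\|p^m\y\|_h = p^{-m}\|\y\|_h$ and the parity-preserving (even) shell-index shift giving $\alpha_h(p^m\y)=\alpha_h(\y)$, details the paper leaves implicit in its final equality.
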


\begin{proof}
Take the Fourier transform of $\rho_{X^{(m)}}$ and use Lemma~\ref{StateSpaces:Intmoverballs2} to obtain the equalities
\begin{align*} 
\phi_{\ve{X}^{(m)}}(\y) & = \int_{(\Gm)^2} \langle [\ve{x}]_m, \y\rangle_m\rho_{\ve{X}^{(m)}}([\ve{x}]_m) \,\d [\ve{x}]_m = \int_{\Qp^2} \langle [\ve{x}]_m, \y\rangle_m\rho_{\ve{X}^{(m)}}([\ve{x}]_m)\,\d \ve{x}.%
\end{align*}
Equation~\eqref{eq:newdensitydiscrete2d} followed by a change of variables and then \eqref{eq:pairing-2d} together imply that
\begin{align*} %
\phi_{\ve{X}^{(m)}}(\y) & = \int_{\Qp^2} \langle [\ve{x}]_m, \y\rangle_m\rho([p^{-m}\ve{x}])p^{2m}\,\d \ve{x}\\ %
& = \int_{\Qp^2} \langle [p^m\ve{x}]_m, \y\rangle_m\rho([\ve{x}])p^{2m}\,\d (p^m\ve{x})\\ %
& = \int_{\Qp^2} \langle [\ve{x}], p^m\y\rangle\rho([x]) \,\d \ve{x} %
= 1 - \frac{\alpha_h(\y)\|\y\|_h^b}{p^{mb}},
\end{align*}
 where Proposition~\ref{prop:charh} implies the ultimate equality.%
\end{proof}

The Fourier transform takes a $n$-fold convolutions to an $n$-fold products, so for each $n$, the characteristic function $\phi^{(m)}_n$ for $\ve{X}_n^{(m)}$ is an $n$-fold product.

\begin{corollary}
For each natural number $n$, 
\begin{equation*}
\phi^{(m)}_n(\y) = \left(1 - \frac{\alpha(\y)\|\y\|_h^b}{p^{mb}}\right)^n.
\end{equation*}
\end{corollary}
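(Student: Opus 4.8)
The plan is to invoke the multiplicativity of the Fourier transform under convolution, together with Proposition~\ref{prop:1-dChar}. By construction, $\ve{S}_n^{(m)} = \ve{X}_0^{(m)} + \ve{X}_1^{(m)} + \cdots + \ve{X}_n^{(m)}$, where the $\ve{X}_i^{(m)}$ are independent, each distributed as $\ve{X}^{(m)} = Q_m^{(2)}\circ \ve{X}$, and $\ve{X}_0^{(m)}$ is almost surely the group identity $[\ve{0}]$. The law of a sum of independent random variables is the convolution of the individual laws, and the characteristic function of a convolution is the pointwise product of the characteristic functions, so the characteristic function of $\ve{S}_n^{(m)}$ factors as a product over the $n+1$ summands.

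First I would record that the characteristic function of $\ve{X}_0^{(m)}$ is identically $1$, since $\ve{X}_0^{(m)}$ is concentrated at the identity and $\langle [\ve{0}]_m, \y\rangle_m = 1$ for every $\y$ in $(p^{-m}\Zp)^2$. Hence this factor does not contribute, and the remaining $n$ identically distributed factors give $\phi^{(m)}_n(\y) = \phi_{\ve{X}^{(m)}}(\y)^n$. Then I would substitute the closed form from Proposition~\ref{prop:1-dChar}, namely $\phi_{\ve{X}^{(m)}}(\y) = 1 - \alpha(\y)\|\y\|_h^b / p^{mb}$, to obtain the claimed $n$-fold power.

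There is essentially no obstacle here; the corollary is an immediate consequence of Proposition~\ref{prop:1-dChar} and the convolution theorem. The only point meriting a word of care is confirming that the convolution-to-product identity applies verbatim in the discrete setting of $(\Gm)^2$ with its induced Haar measure, which is immediate because $\mathcal F_m$ is a unitary isomorphism intertwining convolution on $L^2\big((\Gm)^2\big)$ with pointwise multiplication on $L^2\big((p^{-m}\Zp)^2\big)$. Consequently the proof reduces to two lines once the factorization is stated.
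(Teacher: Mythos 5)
Your proposal is correct and follows exactly the paper's route: the paper derives the corollary from the remark that the Fourier transform takes $n$-fold convolutions to $n$-fold products, so the characteristic function of $\ve{S}^{(m)}_n$ is the $n$-th power of $\phi_{\ve{X}^{(m)}}$, which Proposition~\ref{prop:1-dChar} gives in closed form. Your additional observations (the trivial factor from $\ve{X}^{(m)}_0$ and the validity of the convolution--product identity on $(\Gm)^2$) are fine but were left implicit in the paper.
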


Take $E_m$ to be the function that is defined for any $(n,\y)$ in $\mathds N_0\times \Qp$ by \[E_m(n, \y) = \begin{cases}  
\left(1 - \frac{\alpha(\y)\|\y\|_h^b}{p^{mb}}\right)^{n} &\mbox{if }\|\ve{y}\|_h \leq p^{m}\\0 &\mbox{if }\|\ve{y}\|_h > p^m.\end{cases}\]

Take the inverse Fourier transform of the characteristic function for $S^{(m)}_{n}$ and use \eqref{eq:pairing-2d} to obtain the equalities
\begin{align*}
\rho_m(n,\x_m) &= \int_{p^{-m}\Zp^2} \langle \x_m, \y\rangle_m \left(1-\frac{\alpha(\y) \|\y\|_h^b}{p^{mb}}\right)^{n}\,\d \y\\
&= \int_{\Qp^2} \langle \ve{x}, \y\rangle E_m(n,\y)\,\d \y.
\end{align*}

For each positive real number $t$, denote by $\rho_m(t,[\cdot]_m)$ the function \[\rho_m\big(t,\x_m\big) = \rho_m\big(\lambda(m)t_m,\x_m\big).\] For any $\y$ in $\Qp^2$, \[E_m(\lambda(m)t_m, \y)) \to \e^{-D\alpha(\y)t\|\y\|_h^b}.\]

\begin{proposition}\label{prop:unifcon1-d}
The sequence of functions $(\rho_m(t,[\cdot]_m))$ converges uniformly on $\Qp^2$ to the probability density function $\rho(t, \cdot)$.
\end{proposition}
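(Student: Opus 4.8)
The plan is to mirror the one-dimensional argument of Lemma~\ref{lem:unifcon1-d:A}. Since both $\rho(t,\cdot)$ and $\rho_m(t,[\cdot]_m)$ are realized as inverse Fourier integrals over $\Qp^2$ against the unit-modulus pairing, the triangle inequality gives, for every $\ve{x}$,
\begin{align*}
\big|\rho(t,\ve{x}) - \rho_m(t,[\ve{x}]_m)\big| &\leq \int_{\Qp^2}\Big|\e^{-\sigma t\alpha(\ve{y})\|\ve{y}\|_h^{b}} - E_m(\lambda(m)t_m,\ve{y})\Big|\,\d\ve{y}.
\end{align*}
The right-hand side is independent of $\ve{x}$, so it suffices to show that this integral tends to $0$; uniform convergence on $\Qp^2$ is then immediate. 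The integrand tends to $0$ for each $\ve{y}$ by the pointwise limit recorded immediately before the statement, so everything reduces to justifying passage to the limit inside the integral.

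To organize the domination I would split $\Qp^2$ at the single outermost shell $\Shelltf(2m)$ of radius $p^{m}$, treating the complement $\Qp^2\smallsetminus\Shelltf(2m)$ by dominated convergence and the shell $\Shelltf(2m)$ by a direct volume-versus-decay estimate. The key preliminary observation, using the alternating two-scale structure, is that $\Shelltf(2m)$ is the \emph{only} shell on which the base $1-\alpha(\ve{y})\|\ve{y}\|_h^{b}/p^{mb}$ fails to lie in $(0,1)$: on it $\|\ve{y}\|_h=p^{m}$ and $\alpha(\ve{y})=\alpha_0$, giving the negative base $1-\alpha_0$.

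On the complement, Lemma~\ref{alpha10values} does the work. In the interior $\Ballt(2m-1)$ the estimates $\alpha_0 p^{-b}\in(0,1)$ and $\alpha_1<1$ together with $p^{(h-1)b}<1$ keep $\alpha(\ve{y})\|\ve{y}\|_h^{b}/p^{mb}$ strictly below $1$, so the base is positive and $1-u\le\e^{-u}$ yields
\[
\big|E_m(\lambda(m)t_m,\ve{y})\big| \le \exp\!\Big(-Dt_m\,\alpha(\ve{y})\|\ve{y}\|_h^{b}\Big) \le \exp\!\Big(-\tfrac{Dt}{2}\,\alpha_1\|\ve{y}\|_h^{b}\Big)
\]
for all large $m$, using $\lambda(m)=Dp^{mb}$, $t_m\to t>0$, and $\alpha\ge\alpha_1>0$; in the exterior $\Qp^2\smallsetminus\Ballt(2m)$ the cutoff forces $E_m=0$. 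Combined with $\e^{-\sigma t\alpha(\ve{y})\|\ve{y}\|_h^{b}}\le\e^{-\sigma t\alpha_1\|\ve{y}\|_h^{b}}$, the integrand on this region is bounded by the $m$-independent, integrable function $g=\e^{-\sigma t\alpha_1\|\cdot\|_h^{b}}+\e^{-\tfrac{Dt}{2}\alpha_1\|\cdot\|_h^{b}}$ (integrability being exactly the content behind Lemma~\ref{lemma:continuousEstimateMoment} and Proposition~\ref{Prop:EXY:cont2D}), so dominated convergence sends this part of the integral to $0$.

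The delicate region, and the main obstacle, is the outermost shell $\Shelltf(2m)$, where $\alpha_0>1$ makes the clean $1-u\le\e^{-u}$ domination unavailable. There I would instead bound the integrand by $\e^{-\sigma t\alpha_0 p^{mb}}+|1-\alpha_0|^{\lambda(m)t_m}=\e^{-\sigma t\alpha_0 p^{mb}}+(\alpha_0-1)^{Dp^{mb}t_m}$, both of which decay doubly-exponentially in $m$, while $\Vol(\Shelltf(2m))$ grows only like $p^{2m}$ by \eqref{eq:FineShellVolume}. Since $\alpha_0-1\in(0,1)$ by Lemma~\ref{alpha10values}, the product $p^{2m}\big(\e^{-\sigma t\alpha_0 p^{mb}}+(\alpha_0-1)^{Dp^{mb}t_m}\big)\to0$, controlling the shell contribution. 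Adding the two pieces shows the $\ve{x}$-independent integral tends to $0$, which is precisely the asserted uniform convergence; the only genuinely new feature relative to the one-dimensional Lemma~\ref{lem:unifcon1-d:A} is this boundary estimate, forced by $\alpha_0>1$ and the alternating two-scale geometry.
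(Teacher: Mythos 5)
Your proposal is correct and follows essentially the same route as the paper's proof: represent both $\rho(t,\cdot)$ and $\rho_m(t,[\cdot]_m)$ as inverse Fourier integrals, apply the triangle inequality to get an $\ve{x}$-independent bound, and let that integral tend to $0$. The paper simply asserts the convergence of this integral, whereas you justify it by splitting off the outermost shell $\Shelltf(2m)$ (where the base $1-\alpha_0$ is negative, so $|E_m|=(\alpha_0-1)^{\lambda(m)t_m}$ decays doubly exponentially against the volume $p^{2m}$) and applying dominated convergence elsewhere via Lemma~\ref{alpha10values}; this is a correct and careful filling-in of the step the paper leaves implicit, not a different method.
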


\begin{proof}
For any $\ve{x}$ in $\Qp^2$ and any positive real number $t$, %
\begin{align*}
\left|\rho(t,\ve{x}) - \rho_m(t,\x_m)\right| & = \left|\int_{\Qp^2} \langle\ve{x},\y\rangle \e^{-D\alpha(\y) t \|\y\|_h^b}\,\d \y - \int_{\Qp^2} \langle\ve{x},\y\rangle E_m(\lambda(m)t_m,\y)\,\d \y\right|\\%
& \leq \int_{\Qp^2} \left|\e^{-D\alpha(\y) t \|\y\|_h^b}-E_m(\lambda(m)t_m,\y)\right|\,\d \y \to 0
\end{align*}
Independence of integral to the right of the inequality on $\ve{x}$ implies the uniform convergence of $\rho(t, \cdot)$ to $\rho_m(t, [\cdot]_m)$ on $\Qp^2$.

\end{proof}

Follow the proof in the one dimensional case for the weak convergence of $\Pbm$ to $\Pb$ in the present case.  There are no substantial differences.

\begin{theorem}\label{Sec:Con:Theorem:MAIN2}
The sequence of measures $(\Pbm)$ converges weakly to $\Pb$ in $D([0, \infty)\colon \Qp^2)$.
\end{theorem}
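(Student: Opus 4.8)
The plan is to mirror the one-dimensional argument that established Theorem~\ref{Sec:Con:Theorem:MAIN}, assembling two ingredients: uniform tightness of the sequence $(\Pbm)$ and convergence of its finite-dimensional distributions. The first is already in hand from Proposition~\ref{tight:2D}, which supplies a c\`adl\`ag version of each process and uniform tightness of the family in $D([0,\infty)\colon\Qp^2)$. It therefore suffices to prove convergence of the finite-dimensional distributions on a class of cylinder sets rich enough to determine the limit, and then invoke the standard criterion that tightness together with convergence of finite-dimensional distributions on a generating $\pi$-system yields weak convergence; the limiting process has no fixed times of discontinuity, so no further care about a dense set of continuity times is needed.

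First I would restrict attention to the \emph{restricted histories} $h$ in $H_R$, those whose route is a finite sequence of balls, exactly as in the one-dimensional case. The collection $\C(H_R)$ of such simple cylinder sets is a $\pi$-system that generates the cylinder $\sigma$-algebra, so establishing $\Pbm(\C(h))\to\Pb(\C(h))$ for every restricted $h$ will suffice. For such an $h$, with epoch $(t_0,\dots,t_k)$ and ball route $(\{\ve{0}\},U_1,\dots,U_k)$, I would write $\Pbm(\C(h))$ as the iterated sum of products of the discrete transition densities $\rho_m(t_i-t_{i-1},\cdot)$ and, using Lemma~\ref{StateSpaces:Intmoverballs2} (valid once $p^{-m}$ is smaller than the radii of all the $U_i$), re-express each discrete sum as a genuine integral over the corresponding ball in $\Qp^2$. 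This puts $\Pbm(\C(h))$ and $\Pb(\C(h))$ in the same form: integrals over the common compact domain $U_1\times\cdots\times U_k$ of products of transition densities, the only difference being $\rho_m(t_i-t_{i-1},[\cdot]_m)$ versus $\rho(t_i-t_{i-1},\cdot)$.

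Next I would pass to the limit inside the integral. Proposition~\ref{prop:unifcon1-d} supplies the uniform convergence of $\rho_m(s,[\cdot]_m)$ to $\rho(s,\cdot)$ on $\Qp^2$ for each fixed $s>0$, and the explicit sum defining $\rho(s,\cdot)$, controlled by Lemma~\ref{lemma:continuousEstimateMoment}, shows that the densities $\rho(s,\cdot)$ and hence the $\rho_m(s,[\cdot]_m)$ are uniformly bounded on each time increment. A telescoping estimate then upgrades the uniform convergence of each factor to uniform convergence of the $k$-fold product on the compact set $U_1\times\cdots\times U_k$, and since that domain has finite volume, the integrals converge. This yields $\Pbm(\C(h))\to\Pb(\C(h))$ for every restricted history.

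Finally, because $\C(H_R)$ is a $\pi$-system generating the cylinder $\sigma$-algebra, the convergence of finite-dimensional distributions extends to the full $\sigma$-algebra; combined with the uniform tightness from Proposition~\ref{tight:2D}, this gives weak convergence of $(\Pbm)$ to $\Pb$. The only point requiring genuine care is the passage to the limit under the multiple integral: one must verify that the anisotropic density, whose defining sum carries the shell-dependent coefficients $\alpha_0$ and $\alpha_1$, is uniformly bounded and uniformly approximated so that the product over the $k$ time-steps converges uniformly. That control is precisely what Proposition~\ref{prop:unifcon1-d} and the accompanying density bound provide, so no estimate beyond the anisotropic analogues already assembled is required; notably, the shell-alternation that drives the criticality phenomenon elsewhere in the paper does not obstruct the convergence here.
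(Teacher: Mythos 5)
Your proposal is correct and follows essentially the same route as the paper: the paper's proof of Theorem~\ref{Sec:Con:Theorem:MAIN2} simply instructs the reader to repeat the one-dimensional argument of Theorem~\ref{Sec:Con:Theorem:MAIN}, which is exactly what you do—tightness from Proposition~\ref{tight:2D}, finite-dimensional convergence on the $\pi$-system of restricted histories via Lemma~\ref{StateSpaces:Intmoverballs2} and the uniform convergence of densities in Proposition~\ref{prop:unifcon1-d}, then the standard tightness-plus-FDD criterion. Your added remarks on uniform boundedness of the densities and convergence of the $k$-fold product merely make explicit a step the paper leaves implicit, so there is no substantive difference.
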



\section{Component processes}\label{Sec:Components}



\subsection{Components of the isotropic process}


Take $\ve{X}$ to be the generator for the max-norm primitive process and write \[\ve{X} = (X^{(1)}, X^{(2)}),\] so that $X^{(1)}$ and $X^{(2)}$ are the generators of the component processes.

\begin{proposition}\label{prop:componentsmaxprob}
For any $k$ in $\NN$ and $i$ in $\{1,2\}$,
\begin{equation}\label{eq:aprop:componentsmaxprob}
\Prob\big(X^{(i)}\in\Shello(k)\big) = \frac{(p^b-1)\Big(p-1\Big)}{\Big(p^2-1\Big)}\Bigg(1 + (p-1)\frac{p^{(b+1)}}{p^{(b+1)}-1}\Bigg)\frac{1}{p^{kb}}
\end{equation}
and
\begin{equation}\label{eq:bprop:componentsmaxprob}
\Prob\big(X^{(i)}\in\Ballo(0)\big) = \frac{p(p-1)}{(p^2-1)}\frac{(p^b-1)}{(p^{b+1}-1)}  \coloneqq P_0(\max).
\end{equation}
\end{proposition}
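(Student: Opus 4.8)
The plan is to exploit the coordinate symmetry of the max-norm together with the uniformity of $\rho_{\ve{X}}$ on coarse shells, reducing the computation to one-dimensional volume counts. Since the max-norm is invariant under interchange of the two coordinates and $\rho_{\ve{X}}$ is constant on each coarse shell, $X^{(1)}$ and $X^{(2)}$ are identically distributed, so it suffices to treat $X^{(1)}$. First I would record that the generator is uniform on each $\Shelloc(2i)$ with $\Prob\big(\ve{X}\in\Shelloc(2i)\big)=\ConMG\,p^{-ib}=(p^b-1)p^{-ib}$ for $i\ge 1$, that $\Shelloc(2i)$ is exactly the set where $\max(|x_1|,|x_2|)=p^{i}$, and that $\Vol\big(\Shelloc(2i)\big)=(1-p^{-2})p^{2i}$.

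The core step is to condition on the shell index $i$ and compute the conditional law of $|x_1|$, using the geometric dichotomy that inside $\Shelloc(2i)$ either $|x_1|<p^{i}$, in which case the max constraint forces $|x_2|=p^{i}$, or $|x_1|=p^{i}$, in which case $x_2$ ranges freely over $\Ballo(i)$. For the shell probability $\Prob\big(X^{(1)}\in\Shello(k)\big)$ with $k\ge 1$, the event $|x_1|=p^{k}$ receives contributions only from shells with $i\ge k$: the term $i=k$ contributes conditional probability $\frac{\Vol(\Shello(k))\Vol(\Ballo(k))}{\Vol(\Shelloc(2k))}=\frac{p}{p+1}$, and each term $i>k$ contributes $\frac{\Vol(\Shello(k))\Vol(\Shello(i))}{\Vol(\Shelloc(2i))}=\frac{p-1}{p+1}\,p^{k-i}$, via the one-dimensional volumes $\Vol(\Shello(j))=p^{j}(1-p^{-1})$ and $\Vol(\Ballo(j))=p^{j}$. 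Weighting by the shell probabilities and summing collapses to the single geometric series $\sum_{j\ge 1}p^{-j(b+1)}=(p^{b+1}-1)^{-1}$, yielding $\Prob\big(X^{(1)}\in\Shello(k)\big)=\frac{p^b-1}{p+1}\Big(p+\frac{p-1}{p^{b+1}-1}\Big)p^{-kb}$.

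For the ball probability $\Prob\big(X^{(1)}\in\Ballo(0)\big)=\Prob\big(X^{(1)}=[0]\big)$, only the configuration $x_1=[0]$ with $|x_2|=p^{i}$ contributes in each shell $i\ge 1$, with conditional probability $\frac{\Vol(\Ballo(0))\Vol(\Shello(i))}{\Vol(\Shelloc(2i))}=\frac{p}{p+1}\,p^{-i}$; summing against the weights again reduces to the same geometric series and produces $\frac{p(p^b-1)}{(p+1)(p^{b+1}-1)}$. I would then match both expressions to the stated closed forms by writing $p^2-1=(p-1)(p+1)$, so that $\frac{p^b-1}{p+1}=\frac{(p^b-1)(p-1)}{p^2-1}$ and $\frac{p}{p+1}=\frac{p(p-1)}{p^2-1}$, together with the elementary identity $p+\frac{p-1}{p^{b+1}-1}=1+(p-1)\frac{p^{b+1}}{p^{b+1}-1}$, which follows on clearing denominators.

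The main obstacle is purely bookkeeping: cleanly separating the $i=k$ and $i>k$ cases in the conditional volume computation, and handling the degenerate innermost configuration $x_1=[0]$ consistently with the $m=0$ boundary convention $\Ballo(0)=\{[0]\}$ (where the ``shell'' $\Shello(0)$ degenerates to the singleton). Once the case analysis is fixed, the summations and algebraic simplifications are routine, and a consistency check that the resulting shell and ball probabilities sum to $1$ confirms the normalization.
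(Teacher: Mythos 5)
Your proposal is correct and follows essentially the same route as the paper: condition on which coarse shell $\ve{X}$ lands in, use shell-uniformity to express the conditional probabilities as volume ratios, and sum the resulting geometric series in $p^{-(b+1)}$. The only cosmetic differences are your grouping of the diagonal case (you fold $\Shello(k)\times\Shello(k)$ into the $i=k$ term $\Shello(k)\times\Ballo(k)$, whereas the paper places it in the tail sum) and the form of your final expression, which matches the paper's via the identity $p+\tfrac{p-1}{p^{b+1}-1}=1+(p-1)\tfrac{p^{b+1}}{p^{b+1}-1}=\tfrac{p^{b+2}-1}{p^{b+1}-1}$.
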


\begin{proof}
For any $k$ in $\NN$, the probability $\Prob\big(X^{(2)}\in\Shello(k)\big)$ is given by 
\begin{align*}
\Prob(X^{(2)}\in\Shello(k)) &= \Prob\big(X^{(1)}\in\Ballo(k-1)\cap X^{(2)}\in\Shello(k)\big) \\& \qquad \qquad + \Prob\big(X^{(1)}\in\Ballo(k)^c\cap X^{(2)}\in\Shello(k)\big)\\%
&= \Prob\big(X^{(1)}\in\Ballo(k-1)\cap X^{(2)}\in\Shello(k)\big) \\& \qquad \qquad + \sum_{\ell = k}^\infty \Prob\big(X^{(1)}\in\Shello(\ell)\cap X^{(2)}\in\Shello(k)\big)\\
&= \Prob\big(\ve{X}\in\Shelloc(2k)\big)\frac{\Vol\big(\Ballo(k-1) \times \Shello(k)\big)}{\Vol\big(\Shelloc(2k)\big)} \\& \qquad \qquad + \sum_{\ell = k}^\infty \Prob\big(\ve{X}\in\Shelloc(2\ell)\big)\frac{\Vol\big(\Shello(\ell) \times \Shello(k)\big)}{\Vol\big(\Shelloc(2\ell)\big)}\\%
& = \frac{\ConMG}{p^{kb}}\frac{1}{p}\frac{\Big(1-\frac{1}{p}\Big)}{\Big(1-\frac{1}{p^2}\Big)} +  \sum_{\ell = k}^\infty \frac{\ConMG}{p^{\ell b}} \frac{p^k}{p^\ell}\frac{\Big(1-\frac{1}{p}\Big)^2}{\Big(1-\frac{1}{p^2}\Big)}\\ 
& = \frac{\ConMG\Big(p-1\Big)}{\Big(p^2-1\Big)}\Bigg(1 + (p-1)\frac{p^{(b+1)}}{p^{(b+1)}-1}\Bigg)\frac{1}{p^{kb}}.
\end{align*}
Use the value of $\ConMG$ determined by \eqref{ConMG} to obtain \eqref{eq:aprop:componentsmaxprob}.

The component processes have a non-zero probability of remaining at the origin in a single step.  Calculate this probability by
\begin{align*}
\Prob\big(X^{(2)}\in\Ballo(0)\big) & = \sum_{\ell = 1}^\infty \Prob\big(X^{(1)}\in\Shello(\ell)\cap X^{(2)}\in\Ballo(0)\big)\\
& = \sum_{\ell = 1}^\infty \Prob\big(\ve{X}\in\Shelloc(2\ell)\big)\frac{\Vol\big(X^{(1)}\in\Shello(\ell)\cap X^{(2)}\in\Ballo(0)\big)}{\Vol\big(\Shelloc(2\ell)\big)}\\
& = \sum_{\ell = 1}^\infty \frac{\ConMG}{p^{\ell b}} \frac{1}{p^\ell} \frac{(1-\frac{1}{p})}{(1-\frac{1}{p^2})}
 =  \ConMG\frac{p(p-1)}{(p^2-1)}\frac{1}{(p^{b+1}-1)}.
\end{align*}
Symmetry in the calculations implies that %
\begin{equation*}
\Prob\big(X^{(1)}\in\Ballo(0)\big) = \Prob\big(X^{(2)}\in\Ballo(0)\big).
\end{equation*} %
\end{proof}


\subsection{Components of the anisotropic processes}


Take $\ve{X}$ to be the generator for the anisotropic primitive process with parameter $h$ in $(0,1)$ and write \[\ve{X} = \big(X^{(1)}, X^{(2)}\big),\] so that $X^{(1)}$ and $X^{(2)}$ are the generators of the component processes.  

\begin{proposition}\label{prop:componentsmaxprob}
For any $k$ in $\NN$ and $i$ in $\{1,2\}$,
\begin{equation}\label{eq:1aprop:componentsmaxprob}
\Prob\big(X^{(1)}\in\Shello(k)\big) = \frac{p^b-1}{p^b + p^{hb}}\Bigg(p^{\h b} + \Big(1-\frac{1}{p}\Big)\frac{p^{b+1}}{p^{b+1}-1}\Bigg)\frac{1}{p^{kb}},
\end{equation}
\begin{equation}\label{eq:2aprop:componentsmaxprob}
\Prob\big(X^{(2)}\in\Shello(k)\big) = \frac{p^b-1}{p^b + p^{hb}}\Bigg(p^{\h b} + \frac{(p-1)p^{2hb}}{p^{b+1}-1}\Bigg)\frac{1}{p^{(k-1+h)b}},
\end{equation}
\begin{equation}\label{eq:1bprop:componentsmaxprob}
\Prob(X^{(1)}\in\Ballo(0)) = \frac{p^{b+1}}{p^b + p^{hb}} \frac{p^b-1}{p^{b+1} - 1} \coloneqq P_0(1),
\end{equation}
and
\begin{equation}\label{eq:2bprop:componentsmaxprob}
\Prob(X^{(2)}\in\Ballo(0)) = \frac{p^{hb}}{p^b + p^{hb}}\frac{p^b-1}{p^{(b+1)}-1} \coloneqq P_0(2).
\end{equation}
\end{proposition}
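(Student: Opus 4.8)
The plan is to condition on which fine shell the increment $\ve{X}$ occupies and to exploit the uniformity of $\rho_{\ve{X}}$ on each shell, exactly as in the isotropic case, but while tracking the two distinct radii that the anisotropy forces. The first step is to record the product structure of the fine shells. Since $\BBo(2k)=\Ballo(k)\times\Ballo(k)$ and $\BBo(2k+1)=\Ballo(k)\times\Ballo(k+1)$, taking set-differences gives
\[
\Shellof(2k)=\Shello(k)\times\Ballo(k),\qquad \Shellof(2k+1)=\Ballo(k)\times\Shello(k+1).
\]
Because $\rho_{\ve{X}}$ is constant on each $\Shellof(i)$, for any product event $A_1\times A_2$ the conditional probability $\Prob(\ve{X}\in A_1\times A_2\mid \ve{X}\in\Shellof(i))$ equals the volume ratio $\Vol((A_1\times A_2)\cap\Shellof(i))/\Vol(\Shellof(i))$, which factors into a product of one-component volume ratios.

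The second step is to run the total-probability decomposition for each of the four quantities, treating the two components separately. For $\Prob\big(X^{(1)}\in\Shello(k)\big)$ the even shell $\Shellof(2k)$ contributes in full, since its first coordinate is exactly $\Shello(k)$, giving $\Prob(\ve{X}\in\Shellof(2k))=\ConTd\,p^{-kb}$, while every odd shell $\Shellof(2j+1)$ with $j\ge k$ contributes the fraction $\Vol(\Shello(k))/\Vol(\Ballo(j))=(1-p^{-1})p^{k-j}$, producing a geometric tail $\sum_{j\ge k}\ConTd\,p^{-(j+h)b}(1-p^{-1})p^{k-j}$. For $\Prob\big(X^{(2)}\in\Shello(k)\big)$ the roles of the shells swap: the odd shell $\Shellof(2(k-1)+1)$ now supplies the full contribution $\ConTd\,p^{-(k-1+h)b}$, while the even shells $\Shellof(2j)$ with $j\ge k$ supply the fractional tail. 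The stay-at-origin probabilities are simpler: $X^{(1)}\in\Ballo(0)$ receives contributions only from the odd shells, each through the fraction $\Vol(\Ballo(0))/\Vol(\Ballo(j))=p^{-j}$, whereas $X^{(2)}\in\Ballo(0)$ receives them only from the even shells.

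The final step is to sum the geometric series $\sum_{j\ge k}p^{-j(b+1)}=p^{-k(b+1)}\,p^{b+1}/(p^{b+1}-1)$ and to substitute the simplified constant $\ConTd=p^{hb}(p^b-1)/(p^b+p^{hb})$ obtained from \eqref{Gamma:def}; the four displayed formulas then follow after routine algebra. The only place the anisotropy genuinely intervenes, and the point to get right, is the radius bookkeeping in the $X^{(2)}$ calculation: there the full contribution comes from a shell of radius $p^{(k-1+h)b}$ rather than $p^{kb}$, which is precisely what produces the shifted exponent $p^{-(k-1+h)b}$ in \eqref{eq:2aprop:componentsmaxprob} and breaks the symmetry between the two components that held in the isotropic setting.
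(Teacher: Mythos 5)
Your proposal is correct and takes essentially the same route as the paper: a law-of-total-probability decomposition over the fine shells, the uniformity of $\rho_{\ve{X}}$ on each shell to reduce joint probabilities to volume ratios, and geometric summation with $\ConTd=p^{hb}(p^b-1)/(p^b+p^{hb})$. Your explicit identification $\Shellof(2k)=\Shello(k)\times\Ballo(k)$ and $\Shellof(2k+1)=\Ballo(k)\times\Shello(k+1)$ is exactly the structure the paper uses implicitly when it splits, e.g., $\{X^{(1)}\in\Shello(k)\}$ according to whether $X^{(2)}$ lies in $\Ballo(k)$ or in a larger shell, and your intermediate sums coincide term by term with the paper's \eqref{ProbX1h}, \eqref{ProbX2h}, \eqref{eq:X1hintstep}, and \eqref{ProbX2h0}.
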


\begin{proof}
For any $k$ in $\NN$, the probability $\Prob\big(X^{(1)}\in\Shello(k)\big)$ is given by 
\begin{align}\label{ProbX1h}
\Prob\big(X^{(1)}\in\Shello(k)\big) &= \Prob\big(X_1\in\Shello(k)\cap X^{(2)}\in\Ballo(k)\big) \notag\\ & \qquad \qquad + \Prob\big(X^{(1)}\in\Shello(k)\cap X^{(2)}\in\Ballo(k)^c\big)\notag\\%
&= \Prob\big(X^{(1)}\in\Shello(k)\cap X^{(2)}\in\Ballo(k)\big) \notag\\ & \qquad \qquad + \sum_{\ell = k}^\infty \Prob\big(X^{(1)}\in\Shello(k)\cap X^{(2)}\in\Shello(\ell+1)\big)\notag\\
& = \frac{\ConTd}{p^{kb}} + \sum_{\ell = k}^\infty \frac{\ConTd}{p^{(\ell + h)b}} \frac{p^k}{p^\ell}\Big(1-\frac{1}{p}\Big)\notag\\ 
&= \ConTd\Bigg(1 + \Big(1-\frac{1}{p}\Big)\frac{1}{p^{hb}}\frac{p^{b+1}}{p^{b+1}-1}\Bigg)\frac{1}{p^{kb}}.
\end{align}
For any $\ell$ in $\NN$, the equality \[\Prob\big(X^{(1)}\in\Ballo(\ell-1)\cap X^{(2)}\in\Shello(\ell)\big) = \frac{\ConTd}{p^{(\ell-1 + h)b}}\] and the uniformity of the distribution of $\ve{X}$ in $\Ballo(\ell-1)\times\Shello(\ell)$ together imply that
\begin{align}\label{eq:X1hintstep}
\Prob(X^{(1)}\in\Ballo(0)) & = \Prob\big(X^{(1)}\in\Ballo(0)\cap X^{(2)}\in\Ballo(0)\big)\notag\\ &\qquad \qquad \qquad + \sum_{\ell = 1}^\infty \Prob\big(X^{(1)}\in\Ballo(0)\cap X^{(2)}\in\Shello(\ell)\big)\notag\\
& = \sum_{\ell = 1}^\infty \frac{\ConTd}{p^{(\ell-1 + h)b}} \frac{1}{p^{\ell-1}} %
= \frac{\ConTd}{p^{hb}}\frac{p^{b+1}}{p^{b+1}-1}.
\end{align}
Use \eqref{Gamma:def}, \eqref{ProbX1h}, and \eqref{ProbX2h} to obtain \eqref{eq:1aprop:componentsmaxprob} and \eqref{eq:2aprop:componentsmaxprob}.  

For any $k$ in $\NN$, 

\begin{align}\label{ProbX2h}
\Prob(X^{(2)}\in\Shello(k)) &= \Prob\big(X^{(1)}\in\Ballo(k-1)\cap X^{(2)}\in\Shello(k)\big) \notag\\& \qquad \qquad+ \Prob\big(X^{(1)}\in\Ballo(k-1)^c\cap X^{(2)}\in\Shello(k)\big)\notag\\%
&= \Prob\big(X^{(1)}\in\Ballo(k-1)\cap X^{(2)}\in\Shello(k)\big) \notag\\& \qquad \qquad + \sum_{\ell = k}^\infty \Prob\big(X^{(1)}\in\Shello(\ell)\cap X^{(2)}\in\Shello(k)\big)\notag\\
& = \frac{\ConTd}{p^{(k-1+h)b}} + \sum_{\ell = k}^\infty \frac{\ConTd}{p^{\ell b}} \frac{p^k}{p^\ell}\Big(1-\frac{1}{p}\Big)\notag\\ 
&=  \ConTd\Bigg(1 + \frac{(p-1)p^{hb}}{p^{b+1}-1}\Bigg)\frac{1}{p^{(k-1+h)b}}.
\end{align}
The equalities \eqref{Gamma:def} %
and \eqref{eq:X1hintstep} together imply \eqref{eq:1bprop:componentsmaxprob}.
Furthermore,
\begin{align}\label{ProbX2h0}
\Prob(X^{(2)}\in\Ballo(0)) & = \sum_{\ell = 1}^\infty \Prob\big(X^{(1)}\in\Shello(\ell)\cap X^{(2)}\in\Ballo(0)\big)\notag\\
& = \sum_{\ell = 1}^\infty \frac{\ConTd}{p^{\ell b}} \frac{1}{p^\ell}
= \frac{\ConTd}{p^{(b+1)}-1}.
\end{align}
The equalities \eqref{ProbX2h} and \eqref{ProbX2h0} together with \eqref{Gamma:def} imply \eqref{eq:2bprop:componentsmaxprob}. 

\end{proof}


\subsection{Diffusion constants}


Both components of the coarse primitive process have generator $X_{\max}$.  For each $h$, denote by $X^{(1)}$ and $X^{(2)}$ the components of the generator of the fine primitive process.  Denote by $\sigma(\max)$, $\sigma(h,1)$, and $\sigma(h,2)$ the diffusion coefficients for the limiting Brownian motions to which $X_{\max}$, $X^{(1)}$, and $X^{(2)}$ respectively give rise.  The probability for remaining at the origin is different for the different generators, leading to different diffusion constants for the limiting processes.  Although not directly studied here, differing diffusion constants lead to the concrete effect of different first exit time probabilities from balls \cite[Theorem~3.1]{Weisbart:2021}.  

With the scaling parameter $\lambda(m)$ given by \[\lambda(m) = Dp^{mb},\] Equations~\eqref{eq:alpha:allcases} and \eqref{DiffusionConst1-D} together imply that the diffusion constant $\sigma$ for each of the processes satisfies the equality %
\begin{align}\label{eq:sigma:6}
\sigma = D(1 - P_0)\frac{p^{b+1}-1}{p^b(p-1)}.
\end{align}
Take $P_0$ to be either $P_0(\max)$ in \eqref{eq:bprop:componentsmaxprob}, or the probabilities $P_0(1)$, or $P_0(2)$ given by Proposition~\ref{prop:componentsmaxprob} and use \eqref{eq:sigma:6} to obtain Proposition~\ref{prop:componentCrit}.

\begin{proposition}\label{prop:componentCrit}
For any $h$ in $(0,1)$, \[\sigma(h,1) = \frac{D}{p-1}\left(\frac{p\,(p^{hb}+1)}{p^{b}+p^{hb}}-\frac{1}{p^{b}}\right), \quad \sigma(h,2) = D\left(1+\frac{p^b-1}{(p-1)(p^b+p^{hb})}\right),\]
and
\begin{equation}\label{prop:componentCritmax}\sigma(\max) = D\frac{p^{b+2} -1}{p^b(p^2-1)}.\end{equation}
\end{proposition}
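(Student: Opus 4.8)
The plan is to treat the proposition as a pure substitution of the three return probabilities $P_0(\max)$, $P_0(1)$, and $P_0(2)$ into the master relation \eqref{eq:sigma:6}, namely $\sigma = D(1 - P_0)\frac{p^{b+1}-1}{p^b(p-1)}$, followed by algebraic simplification. Since \eqref{eq:sigma:6} already encodes the link between the one-step holding probability and the limiting diffusion constant, no further probabilistic or analytic input is required: the entire content reduces to simplifying three rational expressions in $p$, $b$, and $h$. The only quantity that varies across the three cases is $P_0$, so I would isolate the factor $1 - P_0$ first and only afterward multiply by the common prefactor.

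For the isotropic generator I would substitute $P_0(\max)$ from \eqref{eq:bprop:componentsmaxprob} and use $p^2-1=(p-1)(p+1)$ to reduce it to $\tfrac{p(p^b-1)}{(p+1)(p^{b+1}-1)}$. Clearing the common denominator gives the numerator $(p+1)(p^{b+1}-1)-p(p^b-1)$, in which the $\pm p^{b+1}$ and $\pm p$ terms cancel to leave $p^{b+2}-1$; multiplying by $\tfrac{p^{b+1}-1}{p^b(p-1)}$ then cancels the factor $(p^{b+1}-1)$ and produces $\sigma(\max)=D\tfrac{p^{b+2}-1}{p^b(p^2-1)}$, as in \eqref{prop:componentCritmax}.

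For the two anisotropic components I would run the same mechanism with $P_0(1)$ and $P_0(2)$ from \eqref{eq:1bprop:componentsmaxprob} and \eqref{eq:2bprop:componentsmaxprob}. In each case $1-P_0$ carries the common denominator $(p^b+p^{hb})(p^{b+1}-1)$, and the numerator, obtained by subtracting a single product from $(p^b+p^{hb})(p^{b+1}-1)$, simplifies once the $p^{2b+1}$ contributions cancel: for the first component the numerator becomes $p^b(p-1)+p^{hb}(p^{b+1}-1)$, and for the second it becomes $p^b(p^{b+1}-1)+p^{(h+1)b}(p-1)$. Multiplying by $\tfrac{p^{b+1}-1}{p^b(p-1)}$ again removes the factor $(p^{b+1}-1)$, and dividing through by $p^b$ yields the stated closed forms. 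To confirm agreement with the right-hand sides as written, I would place each target expression over the common denominator $(p-1)\,p^b\,(p^b+p^{hb})$ and check that the resulting polynomial numerators coincide.

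There is no genuine obstacle here: every step is elementary rational-function algebra, and the proof is driven entirely by \eqref{eq:sigma:6} together with the already-established values of $P_0$. The only point requiring care is bookkeeping the powers of $p$ and $p^{hb}$ through the cancellations, in particular distinguishing $p^{(h+1)b}$ from $p^{hb+1}$. Accordingly, I would present the argument as three short displayed computations, one per generator, each exhibiting the simplified form of $1-P_0$ before the final multiplication by the common prefactor.
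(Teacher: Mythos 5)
Your proposal is correct and takes exactly the paper's approach: the paper obtains the proposition precisely by substituting $P_0(\max)$, $P_0(1)$, and $P_0(2)$ into \eqref{eq:sigma:6} and simplifying, and all three of your simplified numerators and final closed forms check out. (One negligible narrative slip: for the second component it is the $\pm p^{hb}$ terms, not the $p^{2b+1}$ terms, that cancel in the numerator $(p^b+p^{hb})(p^{b+1}-1)-p^{hb}(p^b-1)$, but the expression $p^b(p^{b+1}-1)+p^{(h+1)b}(p-1)$ you state is nonetheless correct.)
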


\begin{theorem}\label{thm:componentCrit}
For any fixed prime $p$ and $b$ in $(0, \infty)$, there is a fixed gap between each of the diffusion coefficients $\sigma(h,1)$ and $\sigma(h,2)$and the diffusion coefficient $\sigma(\max)$ as $h$ varies in $(0,1)$.
\end{theorem}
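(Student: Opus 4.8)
The plan is to read the gap directly off the closed-form expressions in Proposition~\ref{prop:componentCrit} by reducing the claim to an elementary monotonicity argument together with a pair of strict inequalities. First I would substitute $s = p^{hb}$, a continuous strictly increasing bijection of $(0,1)$ onto $(1,p^b)$, under which
\[
\sigma(h,1) = \frac{D}{p-1}\left(\frac{p(s+1)}{p^b+s}-\frac{1}{p^b}\right), \qquad \sigma(h,2) = D\left(1+\frac{p^b-1}{(p-1)(p^b+s)}\right)
\]
become rational functions of $s$ that extend continuously to $s\in[1,p^b]$. Exhibiting a fixed gap amounts to showing that $\inf_{h\in(0,1)}\lvert\sigma(h,i)-\sigma(\max)\rvert$ is strictly positive for $i\in\{1,2\}$.

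Next I would record the monotonicity of the two functions. Since $\tfrac{d}{ds}\tfrac{p(s+1)}{p^b+s}=\tfrac{p(p^b-1)}{(p^b+s)^2}>0$, the coefficient $\sigma(h,1)$ is strictly increasing in $s$, and since $s\mapsto(p^b+s)^{-1}$ is strictly decreasing, $\sigma(h,2)$ is strictly decreasing in $s$. Each function therefore sweeps an interval whose endpoints are its limits as $h\to0^+$ (at $s=1$) and $h\to1^-$ (at $s=p^b$). The value of the monotonicity is that it pins the closest approach of each component coefficient to $\sigma(\max)$ at the endpoint $h=1$: if $\sigma(\max)$ lies strictly above the range of $\sigma(h,1)$ and strictly below the range of $\sigma(h,2)$, then the two infima are exactly $\sigma(\max)-\lim_{h\to1^-}\sigma(h,1)$ and $\lim_{h\to1^-}\sigma(h,2)-\sigma(\max)$.

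The crux is thus to verify these two endpoint separations. Evaluating at $s=p^b$ gives $\lim_{h\to1^-}\sigma(h,1)=\tfrac{D(p^{b+1}+p-2)}{2(p-1)p^b}$ and $\lim_{h\to1^-}\sigma(h,2)=D\bigl(1+\tfrac{p^b-1}{2p^b(p-1)}\bigr)$; comparing each with $\sigma(\max)=\tfrac{D(p^{b+2}-1)}{p^b(p-1)(p+1)}$ and clearing the positive denominators collapses both inequalities to the manifestly true $p^b>1$, i.e.\ $b>0$ — in the second case carrying an additional factor $(p-1)>0$. This shows $\sigma(h,1)<\sigma(\max)<\sigma(h,2)$ for every $h\in(0,1)$, with the separations persisting strictly in the limit $h\to1^-$, so both infima are strictly positive and independent of $h$; these are the asserted fixed gaps. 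I expect the only genuine obstacle to be this algebraic comparison, and it is precisely where the criticality resides: the endpoint limits fail to meet $\sigma(\max)$ even though $\|\cdot\|_h$ degenerates to the max-norm as $h\to1$, because the fine filtration collapses onto a coarser scale and it is the shell combinatorics, not the norm, that force the discontinuity.
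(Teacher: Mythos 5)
Your proposal is correct and follows essentially the same route as the paper's proof: establish monotonicity of $\sigma(h,1)$ (increasing) and $\sigma(h,2)$ (decreasing) in $h$, take the left limits at $h\to1^-$, and verify by explicit algebra that $\sigma(\max)-\sigma(1^-,1)$ and $\sigma(1^-,2)-\sigma(\max)$ are strictly positive — your cleared-denominator computations reduce to $p(p-1)(p^b-1)>0$ and $(p-1)(p^b-1)>0$, reproducing the paper's gap formulas $D\frac{p^b-1}{2p^{b-1}(p+1)}$ and $D\frac{p^b-1}{2p^b(p+1)}$. The substitution $s=p^{hb}$ and the organization (deducing the ordering $\sigma(h,1)<\sigma(\max)<\sigma(h,2)$ from monotonicity plus endpoint separation, rather than asserting it separately) are cosmetic differences only.
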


\begin{proof}

A straightforward calculation shows that $\sigma(h,1)$ increases in $h$, that $\sigma(h,2)$ decreases in $h$, and that for all $b$ and $h$, \[\sigma(h,1) < \sigma(\max) < \sigma(h,2).\] Take left limits to obtain the equalities, 
\begin{equation*}
\sigma(1^-,1) \coloneqq \lim_{h\to 1^-}\sigma(h,1) = D\left(\frac{p^{b+1}+p-2}{2p^b(p-1)}\right)
\end{equation*}
and
\begin{equation*}
\sigma(1^-,2)  \coloneqq  \lim_{h\to 1^-}\sigma(h,2) =  D\left(1+\frac{p^b-1}{2p^b(p-1)}\right).
\end{equation*}
 Compute differences and use \eqref{prop:componentCritmax} to see that
 \begin{align*}
 \sigma(\max) - \sigma(1^-,1) = D\frac{p^{b}-1}{2p^{b-1}(p+1)} \quad \text{and} \quad \sigma(1^-,2) - \sigma(\max) = D\frac{p^{b}-1}{2p^{b}(p+1)}.
 \end{align*}
Both the first and second differences are larger than $0$ for any positive $b$, equal to $0$ in the limit as $b$ tends to $0$, increasing in $b$, and they tend to $\frac{Dp}{2(p+1)}$ and $\frac{D}{2(p+1)}$, respectively, as $b$ tends to infinity.

\end{proof}

As should be expected, for any $h$, any $b$, and any $p$, $\sigma(\max)$ is the convex combination %
\begin{align*}
\sigma(\max) & = \sigma(h,1)\Big(\frac{1}{p+1}\Big) + \sigma(h,2)\Big(\frac{p}{p+1}\Big).
\end{align*} %
The weighting on the summands comes from the differing masses of consecutive shells.  Two fine shells lose distinction at the right endpoint of $h$, with the smaller shell having $\frac{1}{p}$ times the mass of the larger. The coefficients of summands are those required to uniformly weight the probabilities of the limiting shell subsets.


\end{document}